\theoremstyle{plain}
\newtheorem{theorem}{Theorem}[section]
\newtheorem{corollary}[theorem]{Corollary}
\newtheorem{proposition}[theorem]{Proposition}
\newtheorem{lemma}[theorem]{Lemma}
\theoremstyle{definition}
\newtheorem{definition}[theorem]{Definition}
\newtheorem{example}[theorem]{Example}
\theoremstyle{remark}
\newtheorem{remark}[theorem]{Remark}
\numberwithin{equation}{section}
\newcommand{\itemcr}{\hrule height 0pt width 40pt\hfill\break\vspace*{-\baselineskip}}
\newcommand{\R}{\ensuremath{\mathbb R}}
\newcommand{\N}{\ensuremath{\mathbb N}}
\newcommand{\al}{\ensuremath{\alpha}}
\newcommand{\bet}{\ensuremath{\beta}}
\newcommand{\de}{\ensuremath{\delta}}
\newcommand{\eps}{\ensuremath{\varepsilon}}
\newcommand{\io}{\ensuremath{\iota}}
\newcommand{\la}{\ensuremath{\lambda}}
\newcommand{\si}{\ensuremath{\sigma}}
\newcommand{\vphi}{\ensuremath{\varphi}}
\newcommand{\om}{\ensuremath{\omega}}
\newcommand{\Ga}{\ensuremath{\Gamma}}
\newcommand{\De}{\ensuremath{\Delta}}
\newcommand{\Om}{\ensuremath{\Omega}}
\newcommand{\ca}{\ensuremath{\mathcal{A}}}
\newcommand{\cb}{\ensuremath{\mathcal{B}}}
\newcommand{\cc}{\ensuremath{\mathcal{C}}}
\newcommand{\cd}{\ensuremath{\mathcal{D}}}
\newcommand{\ce}{\ensuremath{\mathcal{E}}}
\newcommand{\ct}{\ensuremath{\mathcal{T}}}
\newcommand{\core}{\ensuremath{{\mathop{\mathrm{core}}}}}
\newcommand{\ev}  {\ensuremath{{\mathop{\mathrm{ev}}}}}
\newcommand{\evbar} {\ensuremath{\overline{\mathop{\mathrm{ev}}}}}
\newcommand{\evsr}{\ensuremath{{\ev^s_r}}}
\newcommand{\fl}{\ensuremath{{\mathop{\mathrm{fl}}}}}
\newcommand{\pro} {\ensuremath{{\mathop{\mathrm{pr}_1}}}}
\newcommand{\prt} {\ensuremath{{\mathop{\mathrm{pr}_2}}}}
\newcommand{\spr}{\ensuremath{{\mathop{\mathrm{spr}}}}}
\newcommand{\supp}{\ensuremath{\mathop{\mathrm{supp}}}}
\newcommand{\Fl}{\ensuremath{{\mathop{\mathrm{Fl}}}}}
\newcommand{\FlX}{\ensuremath{{{\mathop{\mathrm{Fl}}}^X}}}
\newcommand{\FlXt}{\ensuremath{{{\mathop{\mathrm{Fl}}}^X_\tau}}}
\newcommand{\FlXmt}{\ensuremath{{{\mathop{\mathrm{Fl}}}^X_{-\tau}}}}
\newcommand{\FlYt}{\ensuremath{{{\mathop{\mathrm{Fl}}}^Y_\tau}}}
\newcommand{\FlYmt}{\ensuremath{{{\mathop{\mathrm{Fl}}}^Y_{-\tau}}}}
\newcommand{\FlZt}{\ensuremath{{{\mathop{\mathrm{Fl}}}^Z_\tau}}}
\newcommand{\FlhXt}{\ensuremath{{{\mathop{\widehat{\mathrm{Fl}}}}{}^X_\tau}}}
\newcommand{\FlhXYZt}
  {\ensuremath{{{\mathop{\widehat{\mathrm{Fl}}}}{}^{X,Y,Z}_\tau}}}
\newcommand{\Lin}{\ensuremath{{\mathop{\mathrm{L{}}}}}}
\newcommand{\Tan}{\ensuremath{{\mathop{\mathrm{T{}}}}}}
\newcommand{\TP}{\ensuremath{{\mathop{\mathrm{TP}}}}}
\newcommand{\TO}{\ensuremath{{\mathop{\mathrm{TO}}}}}
\newcommand{\Lie}{\ensuremath{\mathrm{L}}}
\newcommand{\LX}{\ensuremath{\Lie_X}}
\newcommand{\LY}{\ensuremath{\Lie_Y}}
\newcommand{\LXX}{\ensuremath{\Lie_{X,X}}}
\newcommand{\LXY}{\ensuremath{\Lie_{X,Y}}}
\newcommand{\LXz}{\ensuremath{\Lie_{X,0}}}
\newcommand{\LzX}{\ensuremath{\Lie_{0,X}}}
\newcommand{\LzY}{\ensuremath{\Lie_{0,Y}}}
\newcommand{\LhX}{\ensuremath{\hat{\Lie}_X}}
\newcommand{\LhXYZ}{\ensuremath{\hat{\Lie}_{X,Y,Z}}}
\newcommand{\LhXzz}{\ensuremath{\hat{\Lie}_{X,0,0}}}
\newcommand{\LhzXz}{\ensuremath{\hat{\Lie}_{0,X,0}}}
\newcommand{\LhzzX}{\ensuremath{\hat{\Lie}_{0,0,X}}}
\newcommand{\id}{\ensuremath{\mathrm{id}}}
\newcommand{\diff}{\ensuremath{\mathrm{d}}}
\newcommand{\alb}{\ensuremath{{\al\bet}}}
\newcommand{\bal}{\ensuremath{{\bet\al}}}
\newcommand{\Ualb}{\ensuremath{{U_{\alb}}}}
\newcommand{\pa}{\ensuremath{\partial}}
\newcommand{\ddtz}{\ensuremath{\left.\textstyle{\frac{\mathrm{d}}{\mathrm{d}\tau
}} \right|_0 }}
\newcommand{\ddttz}{\ensuremath{\left.\frac{\mathrm{d}}{\mathrm{d}\tau}\right|_{
\tau=0}}}
\newcommand{\lh}{\ensuremath{\hat{\mathrm{L}}}}
\newcommand{\ti}{\ensuremath{\tilde}}
\newcommand{\tit}{\ensuremath{\ti t}}
\newcommand{\adj}{\ensuremath{^\mathrm{ad}}}
\newcommand{\comp}{\ensuremath{\subset\subset}}
\newcommand{\lgl}{\ensuremath{\langle}}
\newcommand{\rgl}{\ensuremath{\rangle}}
\newcommand{\ep}{\hspace*{\fill}$\Box$}
\newcommand{\nn}{\ensuremath{\nonumber}}
\newcommand{\atopmg}[2]{\ensuremath{\genfrac{}{}{0pt}{1}{#1}{#2}}}
\newcommand{\TM}{\ensuremath{\mathrm{T}M}}
\newcommand{\TSM}{\ensuremath{\mathrm{T}^*M}}
\newcommand{\TN}{\ensuremath{\mathrm{T}N}}
\newcommand{\TrsM}{\ensuremath{\mathrm{T}^r_s M}}
\newcommand{\cTrsM}{\ensuremath{\mathcal{T}^r_s(M)}}
\newcommand{\cTrsN}{\ensuremath{\mathcal{T}^r_s(N)}}
\newcommand{\TsrM}{\ensuremath{\mathrm{T}^s_r M}}
\newcommand{\cTsrM}{\ensuremath{\mathcal{T}^s_r(M)}}
\newcommand{\TsrN}{\ensuremath{\mathrm{T}^s_r N}}
\newcommand{\cTsrN}{\ensuremath{\mathcal{T}^s_r(N)}}
\newcommand{\TpM}{\ensuremath{\mathrm{T}_pM}}
\newcommand{\TrspM}{\ensuremath{(\mathrm{T}^r_s)_p M}}
\newcommand{\TsrpM}{\ensuremath{(\mathrm{T}^s_r)_p M}}
\newcommand{\TqM}{\ensuremath{\mathrm{T}_qM}}
\newcommand{\TqN}{\ensuremath{\mathrm{T}_qN}}
\newcommand{\TsrqM}{\ensuremath{(\mathrm{T}^s_r)_q M}}
\newcommand{\Cinf}{\ensuremath{{\mathcal{C}^\infty}}}
\newcommand{\CinfM}{\ensuremath{{\mathcal{C}^\infty(M)}}}
\newcommand{\OmncM}{\ensuremath{\Omega^n_\mathrm{c}(M)}}
\newcommand{\XM}{\ensuremath{\mathfrak{X}(M)}}
\newcommand{\XN}{\ensuremath{\mathfrak{X}(N)}}
\newcommand{\proS} {\ensuremath{{\mathop{\mathrm{pr}_1^*}}}}
\newcommand{\prtS} {\ensuremath{{\mathop{\mathrm{pr}_2^*}}}}
\newcommand{\poTSM}{\ensuremath{\mathrm{pr}_1^*(\mathrm{T}^*M)}}
\newcommand{\ptTN}{\ensuremath{\mathrm{pr}_2^*(\mathrm{T}N)}}
\newcommand{\poTM}{\ensuremath{\mathrm{pr}_1^*(\mathrm{T}M)}}
\newcommand{\GaE}{{\ensuremath\Ga(E)}}
\newcommand{\GacE}{{\ensuremath\Ga_\mathrm{c}(E)}}
\newcommand{\GaME}{{\ensuremath\Ga(M,E)}}
\newcommand{\GacME}{{\ensuremath\Ga_\mathrm{c}(M,E)}}
\newcommand{\GacKME}{{\ensuremath\Ga_{\mathrm{c},K}(M,E)}}
\newcommand{\TPMN}{\ensuremath{\mathop{\mathrm{TP}}(M,N)}}
\newcommand{\GaTPMN}{\ensuremath{\Gamma(\mathop{\mathrm{TP}}(M,N))}}
\newcommand{\GacTOMM}{\ensuremath{\Gamma_\mathrm{c}(\mathop{\mathrm{TO}}(M,M))}}
\newcommand{\TOMN}{\ensuremath{\mathop{\mathrm{TO}}(M,N)}}
\newcommand{\GaTOMN}{\ensuremath{\Gamma(\mathop{\mathrm{TO}}(M,N))}}
\newcommand{\LTMTN}{\ensuremath{\mathop{\mathrm{L}_{M\times
N}}(\mathrm{T}M,\mathrm{T}N)}}
\newcommand{\Aad}{\ensuremath{A^{\mathrm{ad}}}}
\newcommand{\Asr}{\ensuremath{A^s_r}}
\newcommand{\Apq}{\ensuremath{A(p,q)}}
\newcommand{\Asrpq}{\ensuremath{A^s_r(p,q)}}
\newcommand{\Aqp}{\ensuremath{A(q,p)}}
\newcommand{\Upsbl}{\ensuremath{\Upsilon_\bullet}}
\newcommand{\TsrqN}{\ensuremath{(\mathrm{T}^s_r)_q N}} 
\newcommand{\Dp}{\ensuremath{\mathcal{D}'}}
\newcommand{\Dprs}{\ensuremath{{\mathcal{D}'}^r_s}}
\newcommand{\DpM}{\ensuremath{\mathcal{D}'(M)}}
\newcommand{\DprsM}{\ensuremath{{\mathcal{D}'}^r_s(M)}}
\newcommand{\DprsN}{\ensuremath{{\mathcal{D}'}^r_s(N)}}
\newcommand{\sirs}{\ensuremath{\sigma^r_s}}
\newcommand{\iors}{\ensuremath{\iota^r_s}}
\newcommand{\rhors}{\ensuremath{\rho^r_s}}
\newcommand{\ahat}{\ensuremath{\hat{\mathcal{A}}_0(M)}}
\newcommand{\atil}{\ensuremath{\tilde{\mathcal{A}}_0(M)}}
\newcommand{\amtil}{\ensuremath{\tilde{\mathcal{A}}_m(M)}}
\newcommand{\bhat}{\ensuremath{\hat{\mathcal{B}}(M)}}
\newcommand{\eh}{\ensuremath{\hat{\mathcal{E}}}}
\newcommand{\ehm}{\ensuremath{\hat{\mathcal{E}}_\mathrm{m}}}
\newcommand{\nh}{\ensuremath{\hat{\mathcal{N}}}}
\newcommand{\gh}{\ensuremath{\hat{\mathcal{G}}}}
\newcommand{\ehzzm}{\ensuremath{(\hat{\mathcal{E}}^0_0)_\mathrm{m}}}
\newcommand{\nhzz}{\ensuremath{\hat{\mathcal{N}}^0_0}}
\newcommand{\ehrs}{\ensuremath{\hat{\mathcal{E}}^r_s}}
\newcommand{\ehrsm}{\ensuremath{(\hat{\mathcal{E}}^r_s)_\mathrm{m}}}
\newcommand{\nhrs}{\ensuremath{\hat{\mathcal{N}}^r_s}}
\newcommand{\ehM}{\ensuremath{\hat{\mathcal{E}}(M)}}
\newcommand{\ehmM}{\ensuremath{\hat{\mathcal{E}}_\mathrm{m}(M)}}
\newcommand{\nhM}{\ensuremath{\hat{\mathcal{N}}(M)}}
\newcommand{\ghM}{\ensuremath{\hat{\mathcal{G}}(M)}}
\newcommand{\ehzzM}{\ensuremath{\hat{\mathcal{E}}^0_0(M)}}
\newcommand{\ehzzmM}{\ensuremath{(\hat{\mathcal{E}}^0_0)_\mathrm{m}(M)}}
\newcommand{\nhzzM}{\ensuremath{\hat{\mathcal{N}}^0_0(M)}}
\newcommand{\ghzzM}{\ensuremath{\hat{\mathcal{G}}^0_0(M)}}
\newcommand{\ehrsM}{\ensuremath{\hat{\mathcal{E}}^r_s(M)}}
\newcommand{\ehrsmM}{\ensuremath{(\hat{\mathcal{E}}^r_s)_\mathrm{m}(M)}}
\newcommand{\nhrsM}{\ensuremath{\hat{\mathcal{N}}^r_s(M)}}
\newcommand{\ghrsM}{\ensuremath{\hat{\mathcal{G}}^r_s(M)}}
\newcommand{\gloc}{\ensuremath{\mathcal{G}}}
\newcommand{\appsection}[1]{\let\oldthesection\thesection
  \renewcommand{\thesection}{Appendix \oldthesection}
  \section{#1}\let\thesection\oldthesection}
\begin{document}
\title{A global theory of algebras of generalized functions II: tensor
distributions}
\author{Michael Grosser, Michael Kunzinger, \\ Roland Steinbauer, James Vickers}
\date{}
\maketitle

\begin{abstract}
We extend the construction of \cite{vim} by introducing spaces of
generalized tensor fields on smooth manifolds that possess optimal
embedding and consistency properties with spaces of tensor
distributions in the sense of L. Schwartz.  We thereby obtain a
universal algebra of generalized tensor fields canonically containing
the space of distributional tensor fields. The canonical
embedding of distributional tensor fields also commutes with the Lie
derivative.  This construction
provides the basis for applications of algebras of generalized
functions in nonlinear distributional geometry and, in particular,
to the study of spacetimes of low differentiability in general
relativity.
\medskip

\noindent
{\em Keywords: Tensor distributions, algebras of generalized functions, 
ge\-ne\-ra\-li\-zed tensor fields, Schwartz impossibility result,
diffeomorphism invariant Colombeau algebras, calculus in convenient vector
spaces
}

\noindent
{\em MSC 2000: Primary 46F30;
Secondary 46T30,
26E15,
58B10,
46A17
}
\end{abstract}

\vskip6pt
\section{Introduction}\label{Introduction}
The classical theory of distributions has long
proved to be a powerful
tool in the analysis of linear partial differential equations.
The fact that there can in principle be no general multiplication of
distributions (\cite{Schw}), however, makes them of limited
use in the context of nonlinear theories. On the other hand, in the early 1980's J.F.
Colombeau (\cite{c1,c2,Cbull,c3}) constructed algebras of generalized
functions $\gloc(\R^n)$ on Euclidean space, con\-taining the vector space
$\Dp(\R^n)$ of distributions as a subspace and the space of smooth
functions as a subalgebra.
Colombeau algebras combine a maximum of favorable differential
algebraic properties with a maximum of consistency properties with
respect to classical analysis in the light of Laurent Schwartz'
impossibility result (\cite{Schw}). They have since found
diverse applications in analysis, in particular in linear
and nonlinear PDE with non-smooth data or coefficients (cf., e.g.,
\cite{MOBook,KK,NPS,HOP,G08,NP06,D07,MR08} and references therein) and
have increasingly been used in a geometrical context (e.g.,
\cite{found,symm,book,vim,gprg,conn,Jel}) and in general relativity
(see e.g.\ \cite{clarke,herbertgeo,genhyp,penrose,waveq} and
\cite{SV06} for a survey).

In this work we shall focus exclusively on so-called {\em full}
Colombeau algebras which possess a canonical embedding of
distributions. One drawback of the early approaches (given e.g.\ in
\cite{c2}) was that they made explicit use of the linear structure of
$\R^n$, obstructing the construction of an algebra of generalized
functions on differentiable manifolds. This is in contrast to the
situation with the so-called {\em special} algebras \cite[Sec.\
3.2]{book} which are diffeomorphism invariant but do not allow a
canonical embedding. It was only after a considerable effort that the
{\em full} construction could be suitably modified to obtain
diffeomorphism invariance: Building on earlier works of J.F.\
Colombeau and A.\ Meril (\cite{cm}) and J.\ Jel{\'{\i}}nek (\cite{Jel}) a
diffeomorphism invariant (full) Colombeau algebra $\gloc^d(\Om)$ on open
subsets $\Om\subseteq\R^n$ was constructed in \cite{found}. In this
work a complete classification of full Colombeau-type algebras was
given, resulting in two possible versions of the theory.  In
\cite{Jel3,Jel2}, J.\ Jel{\'{\i}}nek was then able to prove that these
algebras are, in fact, isomorphic, thereby providing a unique
diffeomorphism invariant local theory. We will frequently refer to
this construction as the ``local theory''. Finally, the construction
of a full Colombeau algebra $\gh(M)$ on a manifold $M$ based on
intrinsically defined building blocks was given in \cite{vim}. Note
that such an intrinsic construction is vital for applications in a
geometric context: the two main fields of applications we have in mind
are general relativity and Lie group analysis of differential
equations.  For applications in these fields, however, a theory of
generalized tensor fields extending the above scalar construction is
essential. In this paper we develop such a theory.

One might expect that going from generalized scalar fields to generalized
tensor fields is straightforward and could be accomplished by considering
generalized tensor fields as tensor fields with $\gh(M)$-functions as
coefficients. However, the Schwartz impossibility result excludes such a
construction as will be demonstrated in Section~\ref{nogo}. More generally,
we derive a Schwartz-type impossibility result for the tensorial case which
applies to any natural (in the sense specified below) algebra of
generalized functions.

To circumvent this road block we introduce an additional geometric structure
into the theory which allows us to maintain the maximal possible differential
algebraic properties and compatibility with the smooth case.

In more detail, the plan of this paper is as follows. We
begin, in Section \ref{notation}, by introducing some
concepts and notation used throughout the paper.  In
Section~\ref{scalar} we present a new geometric approach to
the scalar construction of \cite{vim} and point out some features
which are essential in the context
of the present work. In particular, we lay the foundations for establishing
the impossibility results for the tensor case which are
presented in Section~\ref{nogo}. Section~\ref{preview} exemplifies
the guiding ideas of the tensorial theory by the special case of
distributional vector fields and demonstrates the basic strategy for
circumventing the no-go results alluded to above.
Sections~\ref{kin} and~\ref{dynamics} form the core of our
construction. The technically demanding proof of the fact that the
embedded image of a distributional tensor field is smooth in the sense
of \cite{KM} is given in Section \ref{sectioniorsvsmooth}.  The
concept of association---which provides `backwards compatibility'
of the new setting with the theory of distributional tensor
fields---is the topic of Section \ref{association}. In the appendices
we collect material on the key notion of transport operators
(\ref{appendixA}) as well as some fundamental results on calculus in
convenient vector spaces in the sense of \cite{KM} (\ref{appendixB}).

\section{Notation}\label{notation}
Here we fix some notation used throughout this article. $I$
always stands for the interval $(0,1]$. Unless otherwise
stated, $M$ will denote an orientable,
paracompact smooth Hausdorff
manifold of (finite) dimension $n$. For subsets
$A$, $B$ of a topological space, we write $A\comp B$ if A is a compact subset of
the interior of $B$. Concerning locally convex vector
spaces (which we always assume to be Hausdorff) we use the terminology and
the results of \cite{Schaefer}. In particular, ``(F)-space'' and
``(F)-topology'' abbreviate ``Fr\'echet space'' resp.\ ``Fr\'echet
topology''. An (LF)-space is a strict inductive limit of an increasing
sequence of (F)-spaces. A bornological isomorphism between locally convex
spaces is a linear isomorphism respecting the families of bounded sets, in
both directions. For details on the notion of smoothness in the sense of \cite{KM},
see \ref{appendixB}.

For any vector
bundle $E$ over $M$, we denote by $\GaME$ resp.\ $\GacME$ the linear
spaces of smooth sections of $E$ resp.\ of smooth sections of $E$
having compact support. For $K\comp M$, $\GacKME$ stands for the
subspace of $\GacME$ consisting of all sections having their support
contained in $K$.  On $\GaME$, we consider the standard system of
seminorms
\begin{equation}\label{seminormssection}
        p_{l,\Psi,L}(u)\,:=\,\sum_{j=1}^{\dim E}\, \sup_{x\in L,
|\nu|\leq l}
        |\partial^\nu (\psi^j\circ (u|_V)\circ\psi^{-1}
        (x))|\,,
\end{equation}
where $l\in\N_0$, $(V,\Psi)$ is a vector bundle chart with component
functions $\psi^1,\dots,\psi^{\dim E}$ over some chart $(V ,\psi)$ on
$M$ and $L\comp \psi(V)$ (cf.\ \cite[p.\ 229]{book}). This leads
to the usual (F)- resp.\ (LF)-topologies on $\GaME$ resp.\ $\GacME$ if
$M$ is separable (i.e., second countable). For general $M$, $\GaME$
becomes a product of (F)-spaces in this way, while the obvious
inductive limit topology renders $\GacME$ a direct topological sum of
(LF)-spaces. By a slight abuse of language, we will speak of (F)-
resp.\ of (LF)-topologies also in the general case, being cautious
when employing standard results on (F)- resp.\ (LF)-spaces.  When
there is no question as to the base space we will sometimes write
$\Ga(E)$ and $\GacE$ rather than $\GaME$ resp. $\GacME$. Finally,
for an open subset $U$ of the manifold $M$, we denote by $E|U$ the
restriction of the bundle $E$ to $U$. For some relevant basic facts on
pullback bundles, two-point tensors and transport operators we refer
to~\ref{appendixA}.

Specializing to the tensor case, we denote by $\TrsM$ the bundle of
$(r,s)$-tensors over $M$ and by $\cTrsM$ the linear space of smooth tensor
fields of type $(r,s)$. Also we write $\XM$ resp.\ $\Om^1(M)$ for the space of
smooth vector fields resp.\ one-forms on $M$. By $\Om_\mathrm{c}^n(M)$ we denote
the space
of compactly supported (smooth) $n$-forms. The vector space of (scalar)
distributions on $M$ is then defined by
$\Dp(M):=(\Om_\mathrm{c}^n(M))'$.
Following \cite{Marsden}, we will view $\DprsM$, the space of
distributional tensor fields of type $(r,s)$, as the dual space of
tensor densities of type $(s,r)$. Since the manifold $M$ is orientable we
may therefore write
\[\DprsM:= \Big(\cTsrM\otimes_{\Cinf(M)}\Omega^n_\mathrm{c}(M)\Big)'.\]
We  denote the action of the distributional tensor field $v\in
\DprsM$ on the $(s,r)$-density $\tit \otimes \omega$
by $\langle v, \tit\otimes\omega \rangle$. 

Moreover, tensor distributions can be viewed as tensor fields
with (scalar) distributional coefficients via the $\cc^\infty(M)$-module isomorphism
(cf., e.g., \cite[Cor.\ 3.1.15]{book})
\begin{equation}\label{distrtensor}
{{\mathcal D}'}^r_s(M)\cong{\mathcal D}'(M)
                      \otimes_{{\mathcal C}^\infty(M)}{\mathcal T}^r_s(M).
\end{equation}
We also mention the following useful representation of
${{\mathcal D}'}^r_s(M)$ as space of linear maps on dual tensor fields
(\cite[Th.\ 3.1.12]{book}):
\begin{equation} \label{dplin}
\DprsM\cong\Lin_\CinfM(\cTsrM,\DpM).
\end{equation}

For the natural pullback action of a diffeomorphism $\mu$ on smooth or distributional 
sections of vector bundles we will write $\mu^*$, the corresponding push-forward
$(\mu^{-1})^*$ will be denoted by $\mu_*$.

\section{The scalar theory}\label{scalar}

To begin with we recall the following natural list of requirements
for any algebra of generalized functions $\ca(M)$
on a manifold $M$ (cf.\ \cite{found} for a full discussion of the local case):
$\ca(M)$ should be an associative, commutative unital algebra satisfying
\begin{itemize}
\item[(i)] There exists a linear embedding $\iota: \cd'(M) \to \ca(M)$ such that $\iota(1)$
is the unit in $\ca(M)$.
\item[(ii)] For every smooth vector field $X\in {\mathfrak X}(M)$ there exists a Lie derivative
$\lh_X: \ca(M) \to \ca(M)$ which is linear and satisfies the Leibniz rule.
\item[(iii)] $\iota$ commutes with Lie derivatives: $\iota({\mathrm L}_X v) = \lh_X \iota(v)$
for all $v\in \cd'(M)$ and all $X\in {\mathfrak X}(M)$.
\item[(iv)] The restriction of the product in $\ca(M)$ to $\cc^\infty(M)$
coincides with the pointwise product of functions: $\iota(f\cdot g) = \iota(f)\iota(g)$ for
all $f,g \in \cc^\infty(M)$.
\end{itemize}
In addition, for the purpose of utilizing such algebras of generalized functions in non-smooth differential
geometry we will assume the following equivariance properties:
\begin{itemize}
\item[(v)] 
There is a natural operation $\hat \mu^*$ of pullback under diffeomorphisms on $\ca(M)$ that commutes
with the embedding: $\iota(\mu^*v) = \hat\mu^*(\iota(v))$ for all $v\in \cd'(M)$ and all
diffeomorphisms $\mu: M\to M$.
\end{itemize}
Due to (iv), $\ca(M)$ becomes a $\cc^\infty(M)$-module by setting $f\cdot u := \iota(f) u$ for
$f\in \cc^\infty(M)$ and $u\in \ca(M)$.

The celebrated impossibility result of L.\ Schwartz \cite{Schw} states that
there is no algebra $\ca(M)$ satisfying (i)--(iii) and (iv'), where (iv')
is a stronger version of (iv) in which one requires compatibility
with the pointwise product of continuous (or $\cc^k$, for some finite $k$)
functions.

We now begin by recalling the construction of the intrinsic full Colombeau
algebra $\gh(M)$ of generalized functions of \cite{vim} which possesses the
distinguishing properties (i)--(v) above. We will put special emphasis on
the geometric nature of the construction and point out the naturality
of our definitions (see also \cite{ro-bed})---as these are also essential features
in the tensor case. The construction basically consists of the following two
steps:
\begin{enumerate}
 \item [(A)] Definition of a basic space $\ehM$ that is an
  algebra with unit, together with linear embeddings $\iota:\DpM \to \ehM$
  and $\sigma:{\mathcal C}^\infty(M) \to\ehM$ where $\sigma$ is an algebra
  homomorphism and both $\sigma$ and $\iota$ commute with the action of diffeomorphisms.
  Definition of Lie derivatives $\LhX$ on $\ehM$
  that coincide with the usual Lie derivatives on $\DpM$
  (via $\iota$) resp.\ on $\CinfM$ (via $\si$).
 \item [(B)] Definition of the spaces $\ehmM$ of moderate and
  $\nhM$ of negligible elements of the basic space $\ehM$ such that
  $\ehmM$ is a subalgebra of $\ehM$ and
  $\nhM$ is an ideal in $\ehmM$ containing
  $(\iota-\sigma)({\mathcal C}^\infty(M))$.  Definition of the algebra as
  the quotient $\ghM:={\ehmM/\nhM}$.
\end{enumerate}
Observe that step (A) serves to implement properties (i)--(iii) and (v) of the above
list while step (B) guarantees the validity of (iv).
Since step (A) describes the basic space underlying our construction of
generalized functions we refer to this step (by analogy with
analytic mechanics) as giving the {\em ``kinematics''} of the construction,
and since step (B) refers to additional (asymptotic) conditions
which we impose on
the objects, we will refer to this step as giving the {\em ``dynamics''} of
the construction.

To introduce the kinematics part of the theory we discuss the
question of the embeddings which will lead us to a natural choice of
the basic space. We wish to embed both the space of smooth functions
$C^\infty(M)$ and the space of distributions $\Dp(M)$.
Since smooth functions depend upon
points $p \in M$ and distributions depend upon compactly supported
$n$-forms it is natural to take our space of generalized functions to depend
upon both of these. However, for technical reasons it is convenient to only
use normalized $n$-forms.
\begin{definition}\label{a0hatdef} \itemcr
\begin{itemize}
\item [(i)] The space of compactly supported $n$-forms with unit integral is denoted by
\[\ahat:=\{\om\in\Om^n_\mathrm{c}(M):\int_M\om=1\}.\]
\item [(ii)] The basic space of generalized scalar fields is given by
 \[\eh(M):={\mathcal C}^\infty(\hat{\mathcal A}_0(M)\times M).\]
\end{itemize}
\end{definition}
Here and throughout this paper, smoothness is understood in the sense of calculus in
convenient vector spaces (\cite{KM}), which provides a natural and powerful setting
for infinite-dimensional global analysis. A map between
locally convex spaces is defined to be smooth if it maps smooth curves to
smooth curves. For some facts on convenient calculus in the context of the
scalar theory we refer to \cite[Sec.\ 4]{found}. More specific results pertaining to
the present paper are developed in \ref{appendixB}.
Elements of the basic space will be denoted by $R$ and their arguments
by $\omega$ and $p$. 
\begin{definition}\label{scalarembed}
We define the embedding of smooth functions resp.\ distributions into the basic space
by
\[ \sigma(f)(\omega,p):=f(p)\quad \mbox{and }
\quad \iota(v)(\omega,p):=\langle v,\omega\rangle.\]
\end{definition}
Note that we clearly have $\sigma(fg)=\sigma(f)\sigma(g)$.

The second ingredient of the kinematics part of the
construction is the definition of an appropriate Lie derivative. Given a
complete vector field $X$, the Lie derivative of a geometric object defined
on a natural bundle on a manifold $M$ may be given in terms of the pullback
of the induced flow (Appendix A and \cite{michorbook}).
This geometric approach has the
further advantage that in every instance the Leibniz rule is an immediate
consequence of the chain rule.
In order to define the Lie derivative of an element $R\in\ehM$ we therefore
first need to specify the action of diffeomorphisms on $\ehM$.

Given a diffeomorphism $\mu:M\to M$ we
have the following pullback actions of $\mu$ on the spaces of smooth
functions resp.\ of distributions:
\[\mu^*f(p):=f(\mu p)\quad \mbox{and}\quad \langle\mu^*v,\omega\rangle:=\langle v,\mu_*\omega\rangle,\]
where $\mu p:=\mu(p)$ and $\mu_*\omega$ denotes the
push-forward of the $n$-form $\omega$.  Hence the natural choice of
definitions is the following.
\begin{definition}\itemcr
\begin{itemize}
\item [(i)] The action of a diffeomorphism $\mu$ of $M$ on elements of $\ehM$ is given by
 \[(\hat\mu^*R)(\omega,p):=R(\mu_*\omega,\mu p).\]
\item [(ii)] The Lie derivative on $\ehM$ with respect to a complete smooth vector
field $X$ on $M$ is
 \[\hat \Lie_X
R:=\left.\frac{d}{d\tau}\right|_{\tau=0}(\widehat{\Fl^X_\tau})^*\, R,\]
where $\Fl^X_\tau$ denotes the flow induced by $X$ at time $\tau$.
\end{itemize}
\end{definition}

\noindent It is now readily shown that
\begin{equation*}
\hat\mu^*\circ\sigma=\sigma\circ\mu^*\quad \mbox{and}\quad \hat\mu^*\circ\iota=\iota\circ\mu^*
\end{equation*}
which immediately implies
\[\hat \Lie_X\circ\sigma=\sigma\circ \Lie_X\quad \mbox{and}\quad \hat
\Lie_X\circ\iota=\iota\circ \Lie_X.\]
Moreover, an explicit calculation gives
\[
\hat \Lie_XR(\omega,p)=
-d_1R(\omega,p)\,\Lie_X\omega+\Lie_XR(\omega,.)\mid_p
\]
which is precisely the definition of the Lie derivative in the general case given in
equation (14) of \cite{vim}.

Having established (i)--(iii) and (v) we now turn to step (B), i.e.,
the dynamics part of our construction. The key idea in establishing
(iv) is to identify, via a quotient construction, the images of
smooth functions under both the embeddings: For smooth $f$
one has $\sigma(f)(\omega,p)=f(p)$, whereas regarding $f$ as a distribution,
one has $\iota(f)(\omega,p)=\int f(q)\omega(q)$. In order to identify these two expressions
we would like to set $\omega(q)=\delta_p(q)$. Clearly this is not possible in a
strict sense, but replacing the $n$-form $\omega$ by a net of $n$-forms
$\Phi(\eps,p)$ which tend to $\delta_p$ appropriately as $\eps \to 0$
and using suitable asymptotic estimates shows the right way to proceed.

We begin by defining an appropriate space of delta nets 
(see \cite{vim} for details).
\begin{definition} \itemcr\label{kernels}
\begin{enumerate}
 \item[(1)] An element $\Phi\in \cc^\infty(I \times M,\ahat)$ 
is called a smoothing kernel
if it satisfies the following conditions
\begin{itemize}
  \item [(i)] $\forall K\subset\subset M$ $\exists\, \eps_0$,$C>0$
              $\forall p\in K$ $\forall \eps \le \eps_0$:
              $\supp\Phi(\eps,p)\subseteq B_{\eps C}(p)$
  \item [(ii)] $\forall K \subset\subset M$ $\forall k,l\in \N_0$
        $\forall X_1,\dots,X_k,Y_1,\dots,Y_l\in\mathfrak{X}(M)$
        \[
                  \sup_{\atopmg{p\in K}{q\in M}}
\|\Lie_{Y_1}\dots
                  \Lie_{Y_l}(\Lie'_{X_1}+\Lie_{X_1})\dots
            (\Lie'_{X_k}+\Lie_{X_k})
                        \Phi(\eps,p)(q) \| = O(\eps^{-(n+l)})
                \]
\end{itemize}
where $\Lie'_X$ is the Lie derivative of the map
$p \mapsto \Phi(\eps,p)(q)$ and $\Lie_X$ is the Lie derivative of the map
$q \mapsto \Phi(\eps,p)(q)$.
The space of smoothing kernels on $M$ is denoted by \atil. We will 
use the notations $\Phi(\eps,p)$ and $\Phi_{\eps,p}$ interchangeably.
\item[(2)] For each $m\in\N$ we denote by $\tilde{\mathcal{A}}_m(M)$ the
set of all $\Phi\in \atil$ such that for all $f\in \cc^\infty(M)$ and
all $K\subset\subset M$
\[
\sup_{p\in K}  \left|f(p)- \int_M f(q)\Phi(\eps,p)(q) \right | = O(\eps^{m+1})
\]
\end{enumerate}
\end{definition}
The norms and metric balls in this definition are to be understood 
with respect to some Riemannian
metric, but the asymptotic estimates are independent of the choice of metric.

We may now define the subspaces of moderate and negligible elements of
$\eh(M)$ and carry out the announced quotient construction.
\begin{definition}\label{modnegscalar} \itemcr
\begin{itemize}
\item [(i)] $R\in \eh(M)$ is called moderate if \medskip\\
 $\forall K\subset\subset M$  $\forall k\in \N_0$  $\exists N\in \N$
  $\forall\ X_1,\dots,X_k\in
                \mathfrak{X}(M)$
  $\forall\ \Phi \in \atil$
\begin{equation*} 
\sup_{p\in K} |\Lie_{X_1}\dots \Lie_{X_k}(R(\Phi(\eps,p),p)) | = O(\eps^{-N}).
\end{equation*}
The subset of moderate elements of $\eh(M)$ is denoted by $\ehm(M)$.
\item [(ii)] $R\in \ehm(M)$ is called negligible if \medskip\\
 $\forall K\subset\subset M$ $\forall k, l \in \N_0$ $\exists m \in \N$
    $\forall \  X_1,\dots,X_k\in \mathfrak{X}(M)$  $\forall \Phi \in  \tilde{\mathcal{A}}_m(M)$ 
\begin{equation*} 
     \sup_{p\in K} |\Lie_{X_1}\dots \Lie_{X_k}(R(\Phi(\eps,p),p)) | =
O(\eps^{l}).
\end{equation*}
The subset of negligible elements of $\ehm(M)$ is denoted by \nhM.
\item [(iii)] The Colombeau algebra of generalized functions on $M$ is defined by
\[
\ghM := \ehm(M) / \nhM.
\]
\end{itemize}
\end{definition}

One now proves that $(\iota-\sigma)(\Cinf(M))\in\nhM$ by
recourse to the local theory (\cite{found}).
So we obtain (iv) and since the properties obtained in step
(A) are not lost in the quotient construction we indeed have
(i)--(v). Note, however, the following subtlety: The fact that $\ghM$ is a
{\it differential algebra} depends on the invariance of the tests for
moderateness and negligibility under the action of the generalized Lie
derivative $\LhX$. This, however, is surprisingly hard to prove and has
been done in \cite{vim} by recourse to the local theory as well.

We conclude this section with a lemma which will turn out to be useful
for proving the analogue of $(\iota-\sigma)(\Cinf(M))\in\nhM$ in the
tensor case (Theorem \ref{T1}\,(iii)).
\begin{lemma}\label{lemmagtozero}
Let $g\in\Cinf(M\times M)$ satisfy $g(p,p)=0$ for all $p\in M$, and let $m\in\N_0$.
Then for every $\Phi\in\tilde\ca_m(M)$ and every $K\comp M$ we have
\begin{equation}\label{gtozero}
\sup\limits_{p\in K}\left|\int_Mg(p,q)\Phi(\eps,p)(q)\right|=O(\eps^{m+1}).
\end{equation}
\end{lemma}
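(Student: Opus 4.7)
The plan is to reduce to a local chart calculation and then exploit a Hadamard decomposition of $g$ along the diagonal, after which the moment-vanishing property of $\tilde\ca_m$ can be applied to a finite list of fixed, $p$-independent smooth functions.

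First, I would localize. Using that $K\comp M$ together with the support condition (i) in Definition \ref{kernels}, choose a finite atlas $(U_j,x_j)$ covering $K$ with a subordinate smooth partition of unity $(\chi_j)$ such that, for some $\eps_1>0$, the inclusion $\supp\Phi(\eps,p)\subseteq U_j$ holds whenever $p\in\supp\chi_j$ and $\eps\leq\eps_1$. Writing $\int_M g(p,q)\,\Phi(\eps,p)(q)=\sum_j\chi_j(p)\int_{U_j}g(p,q)\,\Phi(\eps,p)(q)$, it suffices to control each summand uniformly in $p\in K_j:=K\cap\supp\chi_j$.

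Second, in each local chart I would Taylor-expand $g$ along the diagonal. Applying Hadamard's lemma to $(p,y)\mapsto g(p,p+y)$ on a convex neighbourhood of the diagonal yields, for any $m$,
\[
 g(p,q)=\sum_{1\le|\al|\le m}\frac{(q-p)^\al}{\al!}\,a_\al(p)+R(p,q),
\]
with $a_\al\in\Cinf(U_j)$ and $|R(p,q)|\le C_j|q-p|^{m+1}$ on a compact neighbourhood of $K_j\times K_j$. For the remainder I use that $\supp\Phi(\eps,p)\subseteq B_{\eps C}(p)$ and, by condition (ii) with $k=l=0$, $\|\Phi(\eps,p)(q)\|=O(\eps^{-n})$; since the volume of the support is $O(\eps^n)$ the chart integral $\int|\Phi(\eps,p)|$ is $O(1)$. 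Hence $\int R(p,q)\,\Phi(\eps,p)(q)=O(\eps^{m+1})$ uniformly for $p\in K_j$.

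Third, I handle the polynomial terms. Fix $\al$ with $1\le|\al|\le m$ and pick $\eta\in\Cinf(M)$ with compact support in $U_j$ that equals $1$ on a neighbourhood of $K_j$ containing every $\supp\Phi(\eps,p)$ for $p\in K_j$, $\eps\le\eps_1$. Then $f_\al:=\eta\cdot x_j^\al\in\Cinf(M)$ is a \emph{fixed} smooth function, and the defining property of $\tilde\ca_m(M)$ gives $\sup_{p\in K_j}|f_\al(p)-\int f_\al(q)\,\Phi(\eps,p)(q)|=O(\eps^{m+1})$. On the supports that matter $f_\al=x_j^\al$ and $f_\al(p)=p^\al$, so $\int q^\al\,\Phi(\eps,p)(q)=p^\al+O(\eps^{m+1})$. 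Expanding $(q-p)^\al=\sum_\bet\binom{\al}{\bet}(-p)^{\al-\bet}q^\bet$ and using $\int\Phi(\eps,p)=1$ for $\bet=0$ then yields $\int(q-p)^\al\Phi(\eps,p)(q)=(p-p)^\al+O(\eps^{m+1})=O(\eps^{m+1})$. Multiplying by $a_\al(p)$ (bounded on $K_j$) and summing over $\al$ completes the estimate.

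The main obstacle is conceptual rather than technical: the property defining $\tilde\ca_m$ only applies to a \emph{single, $p$-independent} smooth function, whereas $q\mapsto g(p,q)$ is a $p$-parametrised family. The Hadamard expansion is exactly what dissolves this difficulty, separating the $p$-dependence into smooth coefficients $a_\al(p)$ and leaving fixed monomials in $q$ (cut off to genuine elements of $\Cinf(M)$) on which the moment property of $\tilde\ca_m$ can be invoked uniformly.
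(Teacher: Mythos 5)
Your proof is correct, and at heart it runs on the same engine as the paper's: localize to a chart, Taylor-expand $g$ along the diagonal (the hypothesis $g(p,p)=0$ removes the zeroth-order term), kill the polynomial terms by moment-vanishing, and estimate the remainder directly from the sup-norm and support-size bounds on $\Phi$. Where you differ is in how the moment-vanishing is extracted: the paper passes to the local representative $\phi$ of the smoothing kernel via \cite[Lemma~4.2]{vim}, uses the moment estimates $\sup_{x}\bigl|\int\phi(\eps,x)(y)\,y^\bet\,\diff^n y\bigr|=O(\eps^{m+1-|\bet|})$ for $1\le|\bet|\le m$, and delegates the resulting Taylor argument to \cite[Th.~7.4(iii)]{found}. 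You instead deduce the needed information directly from the defining estimate of $\tilde\ca_m(M)$ by testing against fixed, $p$-independent cut-off monomials $f_\al=\eta\cdot x_j^\al$, then binomially expanding $(q-p)^\al$ and using $\int\Phi(\eps,p)=1$ for the $\bet=0$ term. This is a tidy, more self-contained variant---it bypasses the translation to $\phi$ and the external reference to \cite{found} entirely---and your observation that the $\tilde\ca_m$ condition on genuine single smooth test functions already controls the $p$-parametrised family $q\mapsto g(p,q)$ once the $p$-dependence has been pushed into the coefficients $a_\al(p)$ is exactly the right way to see why the lemma holds.
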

\begin{proof}
Without loss of generality we may assume that $K$ is contained in some open set
$W$ where $(W,\psi)$ is a chart on $M$. Fixing $L$ such that $K\comp L\comp W$
there is an $\eps_0>0$ such that for all $\eps\leq\eps_0$ and all $p\in K$ we
have $\supp \Phi(\eps,p)\subseteq L$, by (1)(i) of Definition \ref{kernels}.
Hence the integral in (\ref{gtozero}) may be written in local coordinates as
$$\int_{\psi(W)}\tilde
g(x,y)\eps^{-n}\phi(\eps,x)\left(\textstyle{\frac{y-x}{\eps}}\right)\diff^n
y$$
where $\tilde g=g\circ (\psi\times\psi)^{-1}\in\Cinf(\psi(W)\times\psi(W))$ and
$\phi:D\,(\subseteq I\times\psi(W))\,\to\ca_0(\R^n)$ has the properties
specified in \cite[Lemma 4.2 (A)(i)(ii)]{vim}. In particular, $D$ contains
$(0,\eps_1]\times\psi(K)$ for some $\eps_1\leq\eps_0$ in its interior and
we have $\sup_{x\in
K'}|\int_{\R^n}\phi(\eps,x)(y)y^\bet\diff^ny|=O(\eps^{m+1-|\bet|})$ for all
multiindices $\bet$ with $1\leq|\bet|\leq m$ and all $K'\comp \psi(W)$. Now a
Taylor argument (analogous to the one in the proof of 
\cite[Th.\ 7.4\,(iii)]{found}, with $\al$ set equal to $0$) establishes (\ref{gtozero}).
\end{proof}
Note that for $g(p,q)=f(p)-f(q)$ where $f\in\CinfM$, the asymptotic estimate
(\ref{gtozero}) is nothing but the condition defining the space $\amtil$.

\section{No-Go results in the tensorial setting}\label{nogo}

In this section we establish some general no-go results in the spirit of
the Schwartz impossibility theorem \cite{Schw}, valid for tensorial extensions of
{\em any} algebra $\ca(M)$ of generalized functions satisfying the set of requirements stated in
Section \ref{scalar}. For a comprehensive discussion tailored to the special case 
$\ca(M)=\gh(M)$ we refer to \cite{mg-bed}.

Throughout this section we suppose that $\ca(M)$ is any associative, commutative
unital algebra with embedding $\iota: \cd'(M) \to \ca(M)$ satisfying 
conditions (i)--(v) from Section \ref{scalar}.

We first note that such an $\iota$ cannot
be $\cc^\infty(M)$-linear. In fact, let $M= \R$. Then supposing that $\iota$ is
$\cc^\infty(\R)$-linear we derive the following contradiction:
\begin{eqnarray*}
\iota(\delta) = \iota (1) \iota(\delta) =
\iota (v.p.\frac{1}{x} \cdot x)  \iota(\delta) =  \iota (v.p.\frac{1}{x})
\iota(x\delta) = 0.
\end{eqnarray*}
Clearly this calculation can be pulled back to any manifold. Thus, in general,
\begin{equation}\label{different}
\iota(fv)\neq\iota(f)\cdot\iota(v)\qquad\qquad(f\in{\mathcal C}^\infty(M),\
v\in{\mathcal D}'(M)),
\end{equation}
or, $\iota(fv)\neq f\cdot\iota(v)$, for any algebra $\ca(M)$ of generalized
functions as above.

As we shall demonstrate, this basic observation forecloses the most obvious
way of extending a given scalar theory of algebras of generalized functions
to the tensorial setting.

To this end, we write the natural embedding
$\rhors:\cTrsM\to\DprsM$ given by
$$\lgl\rhors(t),\tit\otimes\om\rgl:=\textstyle{\int\limits_M}
(t\cdot\tit)\,\om  \qquad(t\in\cTrsM,\ \tit\in\cTsrM,\ \om\in\OmncM)$$
in a different manner: Recall from (\ref{distrtensor}) that
$$
{{\mathcal D}'}^r_s(M)\cong{\mathcal D}'(M)
                      \otimes_{{\mathcal C}^\infty(M)}{\mathcal T}^r_s(M).
$$
Denoting by $\rho$ the standard embedding of $\cc^\infty(M)$ into $\cd'(M)$,
the fact that $\rho$ is $\cc^\infty(M)$-linear
allows one to rewrite $\rhors$ as
\begin{equation}\label{rhors}
\rho^r_s = \rho\otimes_{\cc^\infty(M)}\mathrm{id}:
   {\cc^\infty}(M)\otimes_{\cc^\infty(M)}{\mathcal T}^r_s(M)\to
   \cd'(M)\otimes_{\cc^\infty(M)} {\mathcal T}^r_s(M).
\end{equation}
Given $\ca(M)$ as above it is therefore natural to define the space of
tensor-valued generalized functions as the $\cc^\infty(M)$-module of
tensor fields with generalized coefficients from $\ca(M)$, i.e.,
\begin{equation}\label{generaltensor}
{\ca}^r_s(M):={\ca}(M)
                      \otimes_{{\mathcal C}^\infty(M)}{\mathcal T}^r_s(M).
\end{equation}
It is then tempting to mimic (\ref{rhors}) and
define an embedding of $\DprsM$ into $\ca^r_s(M)$ by
\begin{equation}\label{iorswrongdef}
\iota\otimes\mathrm{id}:
   {\mathcal D}'(M)\otimes_{\cc^\infty(M)}{\mathcal T}^r_s(M)\to
   \ca(M)\otimes_{\cc^\infty(M)} {\mathcal T}^r_s(M).
\end{equation}
The following result, however, shows that this map is not well-defined
(not even in the scalar case $r=s=0$) and therefore cannot
serve as the desired embedding of $\DprsM$ into $\ca^r_s(M)$:
\begin{proposition}\label{cinflinprop} For any algebra $\ca(M)$ which satisfies
(i) from Section \ref{scalar} and is
a unital $\cc^\infty(M)$-module, the following are equivalent:
\begin{itemize}
\item[(i)] $\iota \otimes \mathrm{id}: \cd'(M)\otimes_\R \cc^\infty(M) \to
\ca(M)\otimes_{\cc^\infty(M)}
\cc^\infty(M)$  is $\cc^\infty(M)$-balanced, i.e., $\iota\otimes\mathrm{id}$
from
(\ref{iorswrongdef}) is well-defined.
\item[(ii)] $\iota$ is $\cc^\infty(M)$-linear.
\end{itemize}
\end{proposition}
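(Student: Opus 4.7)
The plan is to exploit the canonical isomorphism of right $\cc^\infty(M)$-modules $\ca(M)\otimes_{\cc^\infty(M)}\cc^\infty(M)\cong \ca(M)$, sending $u\otimes g$ to $g\cdot u$, which exists because $\ca(M)$ is a \emph{unital} $\cc^\infty(M)$-module. Under this identification the whole statement reduces to reading off a single equation $\iota(fv)=f\cdot\iota(v)$ from the balancedness condition.

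For the direction (ii)$\Rightarrow$(i), I would argue by the universal property of the tensor product: if $\iota$ is $\cc^\infty(M)$-linear, then the $\R$-bilinear map $(v,g)\mapsto \iota(v)\otimes g$ satisfies $\iota(fv)\otimes g=f\iota(v)\otimes g=\iota(v)\otimes fg$, so it factors through $\cd'(M)\otimes_{\cc^\infty(M)}\cc^\infty(M)$, giving the required well-definedness of (\ref{iorswrongdef}).

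For the direction (i)$\Rightarrow$(ii), I would fix arbitrary $v\in\cd'(M)$ and $f\in\cc^\infty(M)$. The balancedness of $\iota\otimes\mathrm{id}$ applied to the pair $(fv,1)$ versus $(v,f)$ yields
\[
\iota(fv)\otimes 1 \;=\; \iota(v)\otimes f \quad\text{in}\quad \ca(M)\otimes_{\cc^\infty(M)}\cc^\infty(M).
\]
Pushing both sides through the canonical isomorphism to $\ca(M)$ gives $\iota(fv)=f\cdot\iota(v)$, which is precisely $\cc^\infty(M)$-linearity of $\iota$.

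The main (and essentially only) thing to check is that the canonical isomorphism $\ca(M)\otimes_{\cc^\infty(M)}\cc^\infty(M)\cong \ca(M)$ is legitimate, which hinges on $\ca(M)$ being a \emph{unital} module so that $u\otimes 1\mapsto u$ is indeed an isomorphism. There is no genuine analytic content here: the proposition is a general module-theoretic fact, and the point of stating it is to highlight that any attempt to define a tensorial embedding via the naive formula (\ref{iorswrongdef}) collapses, since $\cc^\infty(M)$-linearity of $\iota$ is ruled out by the Schwartz-type calculation preceding the proposition.
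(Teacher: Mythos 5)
Your proof is correct and follows essentially the same route as the paper: both directions hinge on the canonical isomorphism $\ca(M)\otimes_{\cc^\infty(M)}\cc^\infty(M)\cong \ca(M)$ (available precisely because the module is unital), with (i)$\Rightarrow$(ii) read off by applying balancedness to $fv\otimes 1 = v\otimes f$ and (ii)$\Rightarrow$(i) by the universal property. The paper's version is just more terse; there is no substantive difference in the argument.
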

\begin{proof}
Let $v\in \cd'(M)$ and $f,g \in \cc^\infty(M)$. \\
(i)$\Rightarrow$(ii):  $\iota(fv)\otimes 1 = \iota\otimes \mathrm{id} (fv\otimes
1) =
\iota\otimes \mathrm{id} (v\otimes f) = \iota(v)\otimes f = f \iota(v)\otimes
1$. Thus, since
$u\otimes f \mapsto fu$ is an isomorphism from $\ca(M)\otimes_{\cc^\infty(M)}
\cc^\infty(M)$ to
$\ca(M)$, (ii) follows. \\
(ii)$\Rightarrow$(i): $\iota\otimes \mathrm{id} (fv\otimes g) = \iota(fv)
\otimes g = f\iota(v) \otimes g
= \iota(v)\otimes fg = \iota\otimes \mathrm{id} (v\otimes fg)$.
\end{proof}

It is instructive to take a look at the coordinate version of the 
impossibility of (\ref{iorswrongdef}).
Indeed as we shall show below condition (i) of Proposition \ref{cinflinprop} is
equivalent to the
statement that coordinate-wise embedding of distributional tensor fields is
independent of the choice of
a local basis (cf.\ also \cite{RD}). 

To this end, assume
that $M$ can be described by a single chart. Then ${\mathcal T}^r_s(M)$ has
a ${\mathcal C}^\infty(M)$-basis consisting of (smooth) tensor fields, say,
$e_1,\dots,e_m$ $\in$ ${\mathcal T}^r_s(M)$ with $m=n^{r+s}$. By
(\ref{distrtensor}), every $v\in{{\mathcal D}'}^r_s(M)$ can be written as
$v=v^i\otimes e_i$ (using summation convention) with $v^i\in{\mathcal
D}'(M)$.  Consider a change of basis given by
$e_i=a^j_i\hat e_j$, with $a^j_i$ smooth. Then $v=\hat v^j\otimes \hat e_j$
with $\hat v^j = a^j_i v^i$. Applying $\iota\otimes\mathrm{id}$ to both
representations of $v$, we obtain
$$(\iota\otimes\mathrm{id})(v^i\otimes e_i)=
     \iota (v^i)\otimes (a^j_i\hat e_j)=
     (\iota (v^i)a^j_i)\otimes \hat e_j=
     (\iota(a^j_i)\iota(v^i))\otimes \hat e_j$$
resp.\
$$(\iota\otimes\mathrm{id})(\hat v^j\otimes \hat e_j)=
     \iota(a^j_i v^i)\otimes \hat e_j$$
which are different in general due to (\ref{different}).
It follows that coordinate-wise embedding is not feasible for obtaining an
embedding
of tensor distributions.

The following example gives an explicit contradiction for the case $\ca(M)$ $=$
$\ghM$.
\begin{example} Set $M=\R$, and let $v\in {\cd'}^1_0(\R) =
\cd'(\R)\otimes_{\cc^\infty(\R)} \mathfrak{X}(\R)$
be given by $v=\delta' \otimes \partial_x$. Then
$$
v = (1+x^2)\delta' \otimes \frac{1}{1+x^2} \partial_x
$$
and we note that $(1+x^2)$ is in fact the transition function of the underlying
vector bundle $TM$ with respect to the coordinate transformation $x\mapsto
x+x^3$.
With $\iota: \cd'(\R) \to \gh(\R)$, suppose that
$$
\iota(\delta')\otimes \partial_x = \iota((1+x^2)\delta') \otimes \frac{1}{1+x^2}
\partial_x.
$$
Then since $x^2\delta' = 0$ in $\cd'(\R)$, this would amount to
$(1+x^2)\iota(\delta') = \iota(\delta')$.
However, it is easily seen that $x^2$ is not a zero-divisor in $\gh(\R)$ (adapt 
\cite[Ex.\ 1.2.40]{book} by choosing
an appropriate smoothing kernel), so we arrive at a contradiction.
\end{example}

In order to circumvent the ``domain obstruction'' met in
(\ref{iorswrongdef}) (which arose from $\iors\otimes\id$ not being
$\CinfM$-balanced) one might try to switch to isomorphic representations of the
spaces involved: By (\ref{distrtensor}) and (\ref{dplin}), we have
$\DpM\otimes_\CinfM \cTrsM\cong\Lin_\CinfM(\cTsrM,\DpM)$, and similarly
$\ca(M)\otimes_\CinfM \cTrsM\cong\Lin_\CinfM(\cTsrM,\ca(M))$ holds (the
latter is proved analogously to the corresponding statement in \cite[Th.\
3.1.12]{book}). The most plausible candidate for an embedding of
$\Lin_\CinfM(\cTsrM,\DpM)$ into
$\Lin_\CinfM(\cTsrM,\ca(M))$ certainly is $\io_*$, that is, composition from
the left with $\io:\DpM\to\ca(M)$. Indeed, this choice presents no difficulties
whatsoever with respect to the domain $\Lin_\CinfM(\cTsrM,\DpM)$. However, this
time we encounter a ``range obstruction'' in the sense that we do end up only in
$\Lin_\R(\cTsrM,\ca(M))$, due to the fact that $\io$ is only
$\R$-linear.
Proposition \ref{cinflinprop} demonstrates that the domain
and the range obstructions, though of essentially different appearance, are in
fact equivalent.

It is noteworthy that the range obstruction is
encountered once more
when trying to write down plausible formulae for an embedding of tensor
distributions 
into a na\"{\i}vely defined basic space for generalized tensor fields.
Aiming at minimal changes as compared to the scalar theory it is natural to
start 
out from scalar basic space members $u:\ahat\times M\to \R$, to replace 
the ``scalar'' range space $\R$ by the vector bundle $\TrsM$ and to ask for 
$u(\om,.)$ to be a member of $\cTrsM$, for every $\om\in\ahat$. 
Now when looking for a ``tensor embedding'' $\iors$ 
we aim at guaranteeing $\iors(v)(\om,.)$ (for $v\in\DprsM$) to be a member of
$\cTrsM$ by defining 
it via a $\CinfM$-linear action on $\tit\in\cTsrM$. Virtually the only formula
making 
sense is $\lgl v,\tit\otimes\om\rgl$, forcing us to set
\begin{equation}
(\iors(v)(\om, .)\cdot\tit)(p):=
\lgl v, \tit\otimes\om \rgl.
\label{wrong1}
\end{equation}
At first glance, (\ref{wrong1}) displays a reassuring similarity to the 
scalar case definition $\io(v)(\om,p):=\lgl v,\om\rgl$. In particular, 
both right hand sides do not depend on $p$. This, however, leads to failure 
in the tensor case: Choosing $\tit$ with (nontrivial) compact support, the left
hand 
side also has compact support with respect to $p$, so, being constant
it has to vanish identically, making (\ref{wrong1}) absurd.
On top of this and, in fact, continuing our above discussion we note that 
(\ref{wrong1}) also fails to provide $\CinfM$-linearity of 
$\iors(v)(\om,.)$ since this would imply the contradictory relation 
($f\in\CinfM$)
$$
\lgl v,(f\tit)\otimes\om\rgl =
(\iors(v)(\om,.)\cdot(f\tit))(p)=
f(p)\,(\iors(v)(\om,.)\cdot\tit)(p)=
f(p)\,\lgl v,\tit\otimes\om\rgl.
$$
Finally, (\ref{wrong1}) turns out to be nothing but a reformulation of the range
obstruction: The element $\bar v$ of $\Lin_\CinfM(\cTrsM,\DpM)$ corresponding to
$v\in\DprsM$ by $\lgl\bar v(\tit),\om\rgl=\lgl v,\tit\otimes\om\rgl$ satisfies
$((\io\circ\bar v)(\tit))(\om,p)=\lgl\bar v(\tit),\om\rgl
=\lgl v, \tit\otimes\om \rgl$. Hence defining $\iors(v)$ by (\ref{wrong1})
corresponds to composing $\bar v$ with $\iota$ from the left which is the move
leading straight into the range obstruction.

These considerations show that emulating the scalar case by
na\"{\i}ve  manipulation of formulae has to be abandoned.
In the next section we show how the introduction of an additional geometric
structure allows one to circumvent this problem. In particular, we will arrive
at a formula for the embedding of tensor distributions ((\ref{embedformula})) which 
allows a clear view on the failure of \eqref{wrong1} and which, in fact,
provides a remedy.

\section{Previewing the construction}\label{preview}
The obstructions to a component-wise embedding of distributional
tensor fields discussed in the preceding section are essentially
algebraic in nature.  However, there is also a purely
geometric reason for objecting to such an approach.  We illustrate this
below since it points the way toward the resolution of the problem,
the basic idea going back to~\cite{sotonTF}.

Let us begin by reviewing the embedding of a (regular) scalar distribution
given by a continuous function $g$ on $M$ (see Definition~\ref{scalarembed}).
Pick some $n$-form $\omega$ viewed as approximating the Dirac measure $\delta_p$ around
$p\in M$. Then
\[
(\iota g)(\omega,p) =\langle g,\om\rangle =\int_M g(q)\omega(q)
\]may be seen as collecting values of $g$ around $p$ and forming a smooth average
(recall that $\int\omega=1$).
Now, in case $v$ is a continuous vector field, then
its values $v(q)$ do not lie in the same tangent space for different $q$
and there is in general no way of defining an embedding $\iota^1_0$ of
continuous vector fields via an integral of the form
\begin {equation}\label{wrong}
\iota^1_0(v)(\om,p)=\int_M v(q)\om(q)
\end{equation}
since there is no way of identifying $\TpM$ and $ \TqM$ for $p\not=q$.

However, this observation also points the way to the remedy:
we need some additional geometric structure providing such an
identification. One possibility would be to use a (background)
connection or Riemannian metric. Let $p,q$ lie within a geodesically
convex neighborhood. Then parallel transport along the unique geodesic
connecting $p$ and $q$ defines a map $A(p,q): \TpM \to \TqM$.
In principle it would be possible to employ the shrinking supports of the
smoothing kernels to extend this locally defined ``transport operator''
to the whole manifold using suitable cut-off functions.
However, to avoid technicalities we have chosen
to work directly with compactly supported transport operators $A$ defined as
compactly supported smooth sections of the bundle $\TO(M,M)=\Lin_{M\times
M}(TM,TM)$ (see
\ref{appendixA}), i.e., $A(p,q)$ being a linear map $\TpM\to\TqM$.
This map may be used to ``gather'' at $p$ the values of $v$ (via $A(q,p)v(q)$)
before averaging them, i.e., we may set
\begin{equation}\label{works}
 \iota^1_0(v)(\om,p,A):=\int_M A(q,p)v(q)\,\om(q),
\end{equation}
with the new mechanism becoming most visible by comparing (\ref{wrong}) with
(\ref{works}).

Observe, however, the following important fact: To maintain the spirit of
the full construction, i.e., to provide a canonical embedding independent
of additional choices
we have to make the elements of our basic
space depend on an additional third slot containing $A$.
Indeed, as one can show, $\iota^1_0 (v)$ as defined in (\ref{works}) above depends
smoothly on $\om,p,A$. (In fact, the proof of this statement in the general case is one of the
technically most demanding parts of this paper and will be given in
Section~\ref{sectioniorsvsmooth}.) Thus for each fixed pair $(\om,A)$ we
have that
\[
\iota^1_0 (v)(\om,A):=[p\mapsto \iota^1_0(v)(\om,p,A)]
\]
defines a smooth vector field on $M$.
This strongly suggests that we choose our basic space $\hat{\mathcal
    E}^1_0(M)$ of generalized vector fields to explicitly include
  dependence on the transport operators, i.e.,
\[
\hat{\mathcal E}^1_0(M)
 :=\{u\in\Cinf(\ahat\times M\times\Ga_\mathrm{c}(\TO(M,M)),\TM)\mid
u(\omega,p,A)\in\TpM\}.
\]
In particular, $p\mapsto t(\omega,p,A)$ is a member of $\XM$
for any fixed $\omega,A$. Following this strategy of course means that one
also has to allow for dependence of scalar fields on transport operators
and one must therefore upgrade  the scalar theory from
the old 2-slot version as presented in Section \ref{scalar} to a new 3-slot version.

Finally, we may turn to embedding general distributional vector fields.
By definition of ${{\mathcal D}'}^1_0(M)$, $v$ takes (finite sums of)
tensors $\tilde u\otimes \omega$ with $\tilde u\in\Om^1(M)$ as arguments.
Hence the most convenient way of defining $\iota^1_0(v)(\omega,p,A)$
is to let the prospective smooth vector field $\iota^1_0(v)(\om,A)$ act
on a one-form $\tilde u$. In fact, we may write for continuous $v$
\begin{eqnarray*}
\iota^1_0(v)(\omega,p,A)\cdot\tilde u(p)&=&
           \big(\iota^1_0(v)(\om,A)\cdot\tilde u\big)(p)\\
           &=&\int_M A(q,p)v(q)\cdot\tilde u(p)\,\,\om(q)\\
           &=&\int_M v(q)\cdot A(q,p)^{\mathrm{ad}}\tilde u(p)\,\,\om(q)\\
           &=&\langle v(\,.\,)
                 ,A(\,.\,,p)^{\mathrm{ad}}\tilde u(p)\otimes\om(\,.\,)\rangle.
\end{eqnarray*}
In the last expression above, we are now free to replace the regular
distributional vector field $v$ by any $v\ \in\ {{\mathcal D}'}^1_0(M)$.
This leads to our definition of $\iota^1_0$ by
\begin{equation}\label{embedformula}
\begin{array}{rcl}
\iota^1_0(v)(\om,p,A)\cdot\tilde u(p)
 &:=&\big(\iota^1_0(v)(\om,A)\cdot\tilde u\big)(p) \\[5pt]
 &:=&\langle\ v(.)\ ,A(\,.\,,p)^{\mathrm{ad}}\tilde u(p)\,\otimes\,\om(.)\ \rangle.
\end{array}
\end{equation}

Observe the shift of focus in the above formulas as compared to
(\ref{works}): rather than thinking of the transport operator as
``gathering'' at $p$ the values of the vector field $v$ 
it (more precisely, its flipped and adjoint version) serves to ``spread'' the value
of the ``test one-form'' $\tilde u(p)$ at $p$ to the neighboring
points $q$.

Connecting to Section~\ref{nogo} we point out that the embedding \eqref{embedformula} may be viewed
as a correction of the flawed formula \eqref{wrong1}. Comparison reveals
that the introduction of the transport operator, i.e., the replacement 
of $\tilde u(.)$ by $A(\,.\,,p)^{\mathrm{ad}}\tilde u(p)$, removes both  
failures of \eqref{wrong1}: the right hand side now does depend on $p$ and, moreover,
defines a $\CinfM$-linear mapping on $\Om^1(M)$.

The case of general $(r,s)$-tensor fields can be dealt with by using
appropriate tensor products of the transport operators. The details of
this are given in Section~\ref{kin} on the kinematics part of our
construction.
In particular, this includes the definition of a basic space for
generalized tensor fields of type $(r,s)$ which depend on transport
operators and the general definition of the embeddings $\sirs$ of smooth and
$\iors$ of distributional tensor fields. Furthermore we define the pullback action
as well as the Lie derivative with respect to smooth vector fields for elements of the basic
space in such a way that they commute with the embeddings.
An added complication as compared to the scalar case is the fact that
the transport operators are two-point objects so that the action of
diffeomorphisms needs to be treated with some care. Some basic material on this
topic is collected in \ref{appendixA}.

As already indicated above the proof that the embedded image
$\iors(v)$ of a distributional tensor field $v$ is smooth with respect
to all its three variables (hence belongs to the basic
space) is rather involved.  It builds on some results on calculus in
(infinite-dimensional) convenient vector spaces which are nontrivial
to derive for the following reason: We have to carefully distinguish
(and bridge the gap) between the standard locally convex topologies
defined on the respective spaces of sections
and their 
convenient structures on which the calculus according to \cite{KM} rests.
We provide the proof of smoothness of $\iors(v)$ in Section \ref{sectioniorsvsmooth}
and have deferred some useful results on the calculus to \ref{appendixB}.

The dynamics part of our construction is carried out in
Section~\ref{dynamics}.  The heart of this part is
the quotient construction that allows one to identify $\iors(v)$ and $\sirs(v)$
for smooth tensor fields $v$.  
The introduction of the transport operator as a variable means
that the ``scalar'' space $\eh^0_0(M)$ has to be refined as compared to $\eh(M)$
from \cite{vim} by introducing a third argument. However we can connect the present scalar theory to
that in \cite{vim}
by using an appropriate
saturation principle (Proposition \ref{satprop}).
Since generalized tensor fields depend on transport operators,
derivatives with respect to these have to be taken into account as
well.  Fortunately, due to a reduction principle (Lemma
\ref{00dlemma}) these derivatives decouple from the others. This fact
allows to directly utilize results from \cite{vim} without having to
rework the local theory from \cite{found} in the present context.

An important feature of the Colombeau algebras in the scalar case is
an equivalence relation known as ``association'' which coarse grains the
algebra. As we remarked earlier the Schwartz impossibility result
means that one cannot expect that for general {\it continuous}
functions the pointwise product commutes with the embedding. However
this result is true at the level of association. Furthermore in many 
situations of practical relevance elements of the algebra are associated to conventional
distributions. In applications, this feature has the advantage that in
many cases one may use the mathematical power of the differential
algebra to perform calculations but then use the notion of association
to give a physical interpretation to the answer. In
Section~\ref{association} we extend the definition of association from
the scalar to the tensor case and show in particular that the tensor
product of continuous tensor fields commutes with the embedding at the
level of association.

\section{Kinematics}\label{kin}

In this section we introduce the basic space for the forthcoming
spaces of generalized sections. We also define the embeddings of
smooth and distributional sections as well as the action of
diffeomorphisms and the Lie derivative. The main result of this
section is that the Lie derivative commutes with the embedding of
distributions already at the level of the basic space.

We begin by collecting the ingredients for the definition of
the basic space. For the space 
$\ahat$ we refer to Definition \ref{a0hatdef}\,(i), and for details on the 
space of transport operators $\GaTOMN$ to \ref{appendixA}.

\begin{definition}
We define the space of compactly supported transport operators on $M$
by
\[\bhat:=\GacTOMM.\]
\end{definition}

Elements of $\ahat$ resp.\ $\bhat$ will generically be denoted by $\om$ resp.\ $A$.

\begin{definition}\label{basicspace}
The basic space for generalized sections of type $(r,s)$ on the manifold $M$ is
defined as
\[\ehrs(M):=\{u\in\cc^\infty(\ahat\times M\times\bhat,\TrsM)\mid 
u(\om,p,A)\in\TrspM\}.\]
\end{definition}
Here, both $\ahat$ and $\bhat$ are equipped with their natural (LF)-topologies in 
the sense of Section \ref{notation}. Recall that smoothness is to be understood in the sense 
of \cite{KM}. In particular, $u(\om,A):=p\mapsto u(\om,p,A)$ is a member of $\cTrsM$ for
$\om$, $A$ fixed.

We remark that the definition aimed at in \cite{sotonTF} used two-point tensors
(``TP'', see \ref{appendixA}) rather than transport operators (``TO'').
Of course, it is always possible
to 
switch from the ``TO-picture'' to the ``TP-picture'' by means of the isomorphism 
given in (\ref{blob}).

Next we introduce a core technical device for embedding distributional
sections of $\TrsM$ into the basic space.
\begin{definition}Given $A\in\bhat$ we denote by $\Asrpq$
the induced linear map from $\TsrpM$ to $\TsrqM$, i.e., for any
$\tilde t_p=w_1\otimes\dots\otimes w_s\otimes\bet^1\otimes\dots\otimes\bet^r\in\TsrpM$ we
write
\begin{equation}\label{asr}
\Asrpq(\tilde t_p):=\Apq w_1\otimes\dots
\otimes\Aqp\adj\bet^r\in\TsrqM.
\end{equation}
\end{definition}

Obviously, for all $\tit\in\cTsrM$, the map
$q\mapsto \Asr(p,q){\,}\tit(p):=\Asr(p,q)(\tit(p))$ again defines an element of
$\cTsrM$, for every fixed $p\in M$. Moreover, given a second manifold $N$, it
should be clear how to generalize the definition of $\Asr$ to the case of $A\in\GaTOMN$.
Assigning to $(\tit_p,A)\in\TsrpM\times\GaTOMN$ the (smooth) tensor field
$(q\mapsto\Asrpq{\,}\tit_p)\in\cTsrN$ will be referred to as ``spreading
$\tit_p$ over $N$ via $A$''. Dually, assigning to $(t,p,A)\in\cTrsN\times
M\times\GaTOMN$ the map $q\mapsto \Asrpq\adj{\,} t(q)\in\TrspM$ (being
defined on $N$) will be referred to as ``gathering $t$ at $p$ via $A$'' (compare
also Section~\ref{preview}).

\begin{definition}\label{def-embeddings}\itemcr
 \begin{itemize}
  \item[(i)]\label{sirs}
   We define the embedding $\sirs:\cTrsM\to\ehrsM$ of smooth sections of $\TrsM$
   into the basic space $\ehrsM$ by
   \begin{align*}
    \sirs(t)(\om,A)&:= t\\
    \intertext{resp.} \sirs(t)(\om,p,A)&:= t(p).
   \end{align*}
  \item[(ii)]\label{iors}
   We define the embedding $\iors:\DprsM\to\ehrsM$ of distributional sections
   of $\TrsM$ into the basic space $\ehrsM$ via its action on sections $\tilde
t\in\cTsrM$ by
   \[(\iors(v)(\om,A)\cdot\tilde t)(p)=\iors(v)(\om,p,A)\cdot \tilde t(p)
    :=\langle v(.),\big(\Asr(p,.){\,}\tilde t(p)\big)\otimes\om(.)\rangle.
   \]
\end{itemize}
\end{definition}

In contrast to the case of $\sirs(t)$ where $p\in M$ can simply be plugged into
$t\in\cTrsM$, the
variable $p$ is not a natural ingredient of the argument of a
distribution $v\in\DprsM$. Consequently, it only occurs as a parameter in the
definition of $\iors(v)$. Therefore, a $p$-free version of the definition of
$\iors(v)$ giving meaning directly to $\iors(v)(\om,A)$ is not feasible. On the
other hand, the occurrence of $\tit\in\cTsrM$ in the definition of
$\iors(v)$ is essentially due to the fact that $v$ requires tensors
$\tit\otimes\om$ with $\tit\in\cTsrM$ and $\om\in\OmncM$ to be fed
in as arguments. A $\tit$-free version of the definition of $\iors$, however,
is in fact feasible, cf.\ Remark \ref{titfree} below. 

It is clear that $\sirs$ is
linear, taking elements of $\ehrsM$ as values. As to $\iors$, the map
$\Asr$ given by equation~(\ref{asr}) together with $\tilde t\in \cTsrM$
produce a smooth section $\Asr(p,.){\,}\tilde t(p)$ of $\TsrM$, with $p$
as parameter. Hence the action of $v$ on $\Asr(p,.){\,}\tilde
t(p)\otimes\om(.)$ is defined, giving a complex number depending on
$p$. Since $\iors(v)(\om,p,A)$ is linear in $\tit(p)$ and $\tit(p)$ was
arbitrary, $\iors(v)(\om,p,A)\in\TrspM$.  
To prove the fact that $\iors(v)$ is a smooth function of
its three arguments (in the sense of \cite{KM}), hence in fact takes
values in $\ehrsM$ is more delicate and will
be postponed until Section~\ref{sectioniorsvsmooth}. 
Moreover, equipping $\DprsM$ and $\ehrsM$ with the respective topologies of
pointwise convergence (on $\cTsrM\otimes_\CinfM\OmncM$ resp.\ on $\ahat\times
M\times\bhat$), the embedding $\iors$ is linear and bounded, hence smooth
by \cite[2.11]{KM}. By the uniform boundedness principle stated in 
\cite[30.3]{KM}, $\iors$ remains smooth when the range space is 
equipped with the (C)-topology as defined in \ref{appendixB}. 
By similar (in fact, easier) arguments, $\sirs$ is smooth
in the same sense. Finally, injectivity of $\iors$ is a consequence 
of Theorem \ref{T1}\,(iv) below. A direct
proof, not involving the tools of Section \ref{dynamics}, is possible, yet
for the sake of brevity we refrain from including it.

Next we turn to the action of diffeomorphisms on the basic space and the
diffeomorphism invariance of the embedding $\iors$.  To begin with we take
a look at the transformation behavior of the map $\Asrpq$ under
diffeomorphisms.  In fact, as it turns
out in the context of Lie derivatives (cf.\ the proof of the key 
Proposition \ref{lieembedding} below) it is necessary to use a concept
allowing for the simultaneous action of two different diffeomorphisms at
either slot of $A$. This corresponds to the natural action of pairs of
diffeomorphisms\footnote{yet not of arbitrary diffeomorphisms of $\rho:
M_1\times N_1\to M_2\times N_2$, cf.\ the discussion following (\ref{bulletcommute}) and
(\ref{bulletliecommute})} on transport operators as defined in
(\ref{topullback}).

So let $\mu,\nu:\ M\to N$ be diffeomorphisms. By equation
(\ref{topullback}) we have the following induced action on the factors of
$\Asrpq$:
\begin{eqnarray}
 \big((\mu,\nu)^*A\big)(p,q)&=&(T_q\nu)^{-1}\circ A\big(\mu(p),\nu(q)\big)\circ T_p\mu\\
 \big((\mu,\nu)^*A\big)(q,p)\adj&=&(T_q\mu)\adj \circ
       A\big(\mu(q),\nu(p)\big)\adj\circ(T_p\nu)^{-1,\mathrm{ad}},
\end{eqnarray}
and the action on $\Asr$ is given by
\begin{equation}
 (\mu,\nu)^*\big(\Asr\big)(p,q)=\big((\mu,\nu)^*A\big)^s_r(p,q).
\end{equation}

\begin{definition}\label{def:mu}
 Let $\mu:M\to N$ be a diffeomorphism. We define the induced action of $\mu$ on
the basic space,
 $\hat\mu^*:\ehrs(N)\to\ehrsM$, by
 \begin{eqnarray*}
  \big(\hat\mu^* u\big)(\om,p,A)
  &:=&\mu^*\Big(u\big(\mu_*\om,(\mu,\mu)_*A\big)\Big)(p)\\
  &=&\big(T_{\mu(p)}\mu^{-1}\big)^r_s\, u\big(\mu_*\om,\mu
p,(\mu,\mu)_*A\big).
 \end{eqnarray*}
\end{definition}
It is clear that $\hat\mu^* u$ assigns a member of $\TrspM$ to
every $(\om,p,A)$. In order to obtain $\hat\mu^* u\in\ehrsM$, we have to
establish smoothness in $(\om,p,A)$. Observing support properties and
(\ref{seminormssection}) it follows that  
the
linear maps $\om\mapsto\mu_*\om$ and $A\mapsto(\mu,\mu)_*A$ are bounded
(equivalently, smooth, by \cite[2.11]{KM}) with respect to the (LF)-topologies.
Since $u$ and the action of $\Tan\mu^{-1}$ on $\TrsM$ are also smooth, we 
see that indeed $\hat\mu^* u\in\ehrsM$ holds.

To facilitate the proof of the next proposition we introduce the following notation:
For $A\in\GaTOMN$, $\tit\in\cTsrM$, $p\in M$ denote the spreading 
$q\mapsto \Asrpq{\,}\tit(p)$ of $\tit(p)$ via $A$ by $\theta(A,\tit,p)\in\cTsrN$. 
It is easy to check that for $\mu:M\to N$, $\tit\in \cTsrN$ and $A\in\Gamma(\TO(N,N))$,
\[
\big((\mu,\mu)^*\Asr\big)(p,q)\cdot(\mu^*\tit)(p)=\mu^*\big(\theta(A,\tit,\mu p)\big)(q).
\]
Moreover, using $\theta$ we may write for $A\in\bhat$
\[
\big(\iors(v)(\om,A)\cdot\tit\big)(p)=\langle v,\theta(A,\tit,p)\otimes\om\rangle
\qquad(v\in\DprsM).
\]

\begin{proposition}\label{muiota}
 The action of diffeomorphisms commutes with the embedding $\iors$, that is, we
have for all $v\in\DprsN$
 and all diffeomorphisms $\mu:M\to N$
 \begin{equation}\label{muiotacomplete}
  \hat\mu^*\iors(v)=\iors(\mu^* v).
 \end{equation}
\end{proposition}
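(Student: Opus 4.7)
The plan is to verify the identity by evaluating both sides on an arbitrary test tensor field $\tit\in\cTsrM$ at an arbitrary point $p\in M$, using the duality characterization of Definition \ref{def-embeddings}(ii). Once we show
\[
\big(\hat\mu^*\iors(v)\big)(\om,p,A)\cdot\tit(p)
 \;=\;\big(\iors(\mu^*v)(\om,p,A)\big)\cdot\tit(p)
\]
for all $\tit$, the equality \eqref{muiotacomplete} in $\ehrsM$ follows, since elements of $\TrspM$ are determined by their pairing with $\TsrpM$.

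For the left-hand side I would first unwind Definition \ref{def:mu} to get $(T_{\mu p}\mu^{-1})^r_s\,\iors(v)(\mu_*\om,\mu p,(\mu,\mu)_*A)$, and then pair this element of $\TrspM$ with $\tit(p)\in\TsrpM$. The key algebraic step is the duality identity
\[
\big\lgl (T_{\mu p}\mu^{-1})^r_s\,T,\,\tit(p)\big\rgl
 \;=\;\big\lgl T,\,(T_p\mu)^s_r\tit(p)\big\rgl
 \;=\;\big\lgl T,\,(\mu_*\tit)(\mu p)\big\rgl,
\]
valid for $T\in(\mathrm{T}^r_s)_{\mu p}N$, which simply moves the tangent map over to the dual side and reinterprets it as the pushforward of $\tit$. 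Applying this with $T=\iors(v)(\mu_*\om,\mu p,(\mu,\mu)_*A)$ and invoking Definition \ref{def-embeddings}(ii) gives
\[
\mathrm{LHS}=\big\lgl v,\,\theta((\mu,\mu)_*A,\mu_*\tit,\mu p)\otimes\mu_*\om\big\rgl.
\]

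For the right-hand side the calculation is shorter: by Definition \ref{def-embeddings}(ii) and the definition of $\mu^*v$ as a distributional tensor field,
\[
\mathrm{RHS}=\big\lgl \mu^*v,\,\theta(A,\tit,p)\otimes\om\big\rgl
 =\big\lgl v,\,\mu_*\theta(A,\tit,p)\otimes\mu_*\om\big\rgl.
\]
Comparing LHS and RHS, everything reduces to the intertwining identity
\[
\theta\big((\mu,\mu)_*A,\mu_*\tit,\mu p\big)\;=\;\mu_*\theta(A,\tit,p),
\]
i.e.\ that ``spread, then push forward'' coincides with ``push forward the data, then spread''. This is precisely the content of the displayed equation stated just before the proposition (with $\mu$ replaced by $\mu^{-1}$, $p$ by $\mu p$, and using $(\mu^{-1})^*=\mu_*$ together with $((\mu^{-1},\mu^{-1})^*)=(\mu,\mu)_*$).

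The main obstacle is purely bookkeeping: keeping track of the various push-forward/pull-back conventions for the two-point object $A$ (where the pair $(\mu,\mu)$ acts, one diffeomorphism per slot, as recalled in \ref{appendixA}) and correctly identifying the dual action of $(T\mu)^r_s$ on contractions. Once the preparatory identity from just before the proposition is in hand, no further analysis is required, and in particular no distribution-theoretic subtleties enter beyond the definition $\lgl\mu^*v,\cdot\rgl=\lgl v,\mu_*(\cdot)\rgl$.
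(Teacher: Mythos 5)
Your argument is correct and is essentially the paper's own proof: after unwinding Definition \ref{def:mu} and pairing against $\tit(p)$, the paper likewise moves $\mu^*$ across the contraction (your duality identity for $(T_{\mu p}\mu^{-1})^r_s$), applies Definition \ref{def-embeddings}\,(ii), and then invokes the intertwining formula for $\theta$ stated just before the proposition together with $\lgl\mu^*v,\cdot\rgl=\lgl v,\mu_*(\cdot)\rgl$. The only cosmetic difference is that you compute both sides and meet in the middle whereas the paper runs a single chain of equalities from left to right; the key lemma and all substantive steps coincide.
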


\begin{proof}
 Let $\mu:M\to N$ be a diffeomorphism and let $v\in\DprsN$, $\om\in\ahat$,
$A\in\bhat$, $\tilde t\in\cTsrM$,
 and $p\in M$. Then we have
 \begin{equation*}
 \begin{split}
  \Big(\big(\hat\mu^*\iors(v)\big)\big(\om,A\big)&\cdot\tilde t\Big)(p)\\
&=\Big(\mu^*\big(\iors(v)(\mu_*\om,(\mu,\mu)_*A)\big)\cdot\tilde
t\Big)(p)\\
   &=\mu^*\Big(\iors(v)\big(\mu_*\om,(\mu,\mu)_*A\big)\cdot\mu_*\tilde
t\Big)(p)\\
   &=\Big(\iors(v)(\mu_*\om,(\mu,\mu)_*A)\cdot\mu_*\tilde
t\Big)(\mu p)\\
   &=\langle v(.),\big(((\mu,\mu)_*\Asr)(\mu p,.){\,}(\mu_*\tilde t)(\mu
p)\big)\otimes\mu_*\om(.)\rangle\\
   &=
     \langle v(.),\mu_*\big(\theta(A,\tit,p)\big)(.)\otimes\mu_*\om(.)\rangle\\
   &=
     \langle (\mu^*v)(..),\theta(A,\tit,p)(..)\otimes \om(..)\rangle\\
=\Big(\big(\iors(\mu^*v)(\om,A)\big)&\cdot\tilde t\Big)(p).
 \end{split}
 \end{equation*}
\end{proof}

Next we turn to the Lie
derivative on the basic space $\ehrs(M)$. To begin with suppose that $X$ is a
smooth and complete vector field on $M$ so that the flow $\FlX$ is defined
globally on $\R\times M$. Then we may
use Definition \ref{def:mu} to define the Lie derivative of $u\in\ehrs(M)$ via
\begin{equation}\label{liecomplete}
 \lh_X u:=\ddttz(\widehat{\FlXt})^*u.
\end{equation}

In the sequel, we will write $\FlhXt$ instead of (the correct)
$\widehat{\FlXt}$, for the sake of line spacing. For $(\LhX u)(\om,p,A)$ to
exist (as an element of $\TrspM$) it suffices
to know that $\tau\mapsto(\FlhXt)^*u(\om,p,A)$ is smooth. However, for
$\LhX u$ to exist and to be a member of $\ehrsM$ (i.e., to be a smooth function
of its
arguments $(\om,p,A)$) we even need that
$(\tau,\om,p,A)\mapsto((\FlhXt)^*u)(\om,p,A)=
T^r_s\FlXmt(u((\FlXt)_*\om,\FlXt p,(\FlXt)_*A))$ is
smooth on $(-\tau_0,+\tau_0)\times\ahat\times M\times\bhat$ for some
$\tau_0>0$. Indeed, by Proposition \ref{lierefinement}\,(1), $(\tau,\om)\mapsto
(\FlXt)_*\om$ and $(\tau,A)\mapsto (\FlXt)_*A$ are smooth, as is
$(\tau,p)\mapsto\FlXt p$. Moreover, a local argument shows that the
action of $\FlX$ on $\R\times\TrsM$ sending $(\tau,v)$ to $T^r_s\FlXmt v$ is
smooth (this in fact also secures that the assumptions of Proposition
\ref{lierefinement}\,(1) are satisfied). Together with smoothness of $u$, we indeed obtain $\LhX
u\in\ehrsM$.

Note that in order to have $(\FlhXt)^*u$ defined as a member of
$\ehrsM$ even for only one particular value of $\tau$ we need $(\FlXt)_*\om$, $\FlXt
p$ and $(\FlXt)_*A)$ defined for this $\tau$ and for {\it all} $\om,p,A$,
irrespective of the size of the supports of $\om$ and $A$ (as to $p$, we
could content ourselves with some open subset of $M$). This exhibits the
important role of completeness of $X$ in the geometric approach to the Lie
derivative on the basic space taken by (\ref{liecomplete}).

As a direct consequence of (\ref{liecomplete}) and
Proposition~\ref{muiota} we have
that the Lie derivative
commutes with the embedding, i.e., we have for all smooth and complete vector
fields
$X$ and all $v\in\DprsM$
\begin{equation}\label{lieiotacomplete}
   \lh_X\iors(v)=\iors(\Lie_Xv).
\end{equation}
The technical background of passing from (\ref{muiotacomplete})
to (\ref{lieiotacomplete}) again involves calculus in convenient
vector spaces. For $v\in\DprsM$, $\tau\mapsto(\FlXt)^*v$ is smooth and $\LX
v=\ddtz(\FlXt)^*v$ holds with respect to the weak topology, due to (1)(ii) of
Proposition \ref{lierefinement}. Also, $\LhX u=\ddtz(\FlhXt)^*u$ for
$u=\iors(v)$ with respect to the topology of pointwise convergence. Applying
the chain rule \cite[3.18]{KM} to the function $\tau\mapsto
(\iors\circ(\FlXt)^*)v$, we obtain from 
(\ref{muiotacomplete}):
\begin{align*}
\LhX\iors(v)&=\ddtz((\FlhXt)^*(\iors(v)))
=\ddtz(\iors((\FlXt)^*v))
=\iors(\ddtz(\FlXt)^*v)\\
&=\iors(\Lie_Xv).
\end{align*}

For the purpose of extending the definition of the Lie derivative to arbitrary
smooth vector fields, by an application of the chain rule we obtain from
(\ref{liecomplete})
\begin{equation}\label{liechain}
 \begin{array}{rcl}
 \lefteqn{(\lh_Xu)(\om,p,A)}\\[5pt]
 &&=\Lie_X(u(\om,A))(p)-{\mathrm d}_1u(\om,p,A)(\Lie_X\om)-{\mathrm d}_3u(\om,p,A)(\LX A),
 \end{array}
\end{equation}
where we recall from \ref{appendixA} that $\LX A$ is an abbreviation for
$\LXX A$.

Note that we do not need full
manifold versions of local results of infinite-dimensional calculus as, e.g.,
the chain rule \cite[3.18]{KM} since we can replace $\ahat\times M\times \bhat$
by $\hat\ca_{00}(M)\times W\times \bhat$ when dealing with local issues on $M$.
Here, $\hat\ca_{00}(M)$ denotes the linear subspace of $\OmncM$ parallel to
$\ahat$ and $W$ is (diffeomorphic to) some open subset of $\R^n$. In this way,
$\diff_1u(\om,p,A)(\eta)$, for example, can be interpreted locally as $\diff
u(\om,p,A)(\eta,0,0)$ in the above sense or, equivalently, as
$\diff(u^\vee(p,A))(\om)(\eta)$ where $u^\vee(p,A)(\om)=u(\om,p,A)$.

The scalar analogue of (\ref{liechain}) first appeared in the local
setting of \cite[Rem.\ 22]{Jel}, where it arises as an operational consequence of
Jel{\'{\i}}nek's approach; see also the discussion of the scalar case in 
\cite[p.\ 4]{ro-bed}. Here, however, it is a direct consequence of our 
natural choice of definitions, in case $X$ is complete.
In the general case we turn equation (\ref{liechain}) into a definition.

\begin{definition}\label{lh}
 Given a smooth vector field $X$ on $M$ we define the Lie derivative $\lh_X$
with respect to $X$ on the basic
 space by
 \begin{equation}\label{lhformula}
 \begin{array}{rcl}
  (\lh_Xu)(\om,p,A)&=&\Lie_X(u(\om,A))(p) \\[5pt]
  &&-{\mathrm d}_1u(\om,p,A)(\Lie_X\om)-{\mathrm d}_3u(\om,p,A)(\LX A).
 \end{array}
 \end{equation}
\end{definition}

To see that the
first term on the right hand side of (\ref{lhformula}) actually defines an
element of $\ehrsM$ we note that by  
Corollary \ref{expCF} (resp.\ Lemma
\ref{reprbasic} and Remark \ref{CDF}), $\ehrsM$ is linearly isomorphic to
$\Cinf(\ahat\times
\bhat,\cTrsM)$ where $\cTrsM$ carries the (F)-topology. Since $\LX$ is linear
and bounded (hence smooth) on
$\cTrsM$, the map
$(\om,A)\mapsto \LX(u(\om,A))$ is smooth from $\ahat\times\bhat$
into $\cTrsM$. By smoothness of $\om\mapsto \LX \om$ resp.\ $A\mapsto \LX A$,
also the second and the third term define members of the basic space.
Note that in the scalar case $r=s=0$, the first term takes
the form of a directional derivative as well, to wit, $\diff_2u(\om,p,A)(X)$.

For complete $X\in \XM$,
also the three individual terms at the right hand side of (\ref{lhformula})
arise in a geometric way:
By slightly
generalizing the constructions of $\FlhXt$ and $\LhX$ for complete $X$ we can
define, for given complete vector fields $X,Y,Z\in\XM$, also
$(\FlhXYZt)^*u:=T^r_s\FlYmt(u((\FlXt)_*\om,\FlYt p,(\FlZt)_*A))$ and $\LhXYZ\,
u:=\ddtz(\FlhXYZt)^*u$. From
this we obtain, by the chain rule,
\begin{equation}\label{lhattriple}
\LhX u=\LhXzz u + \LhzXz u+\LhzzX u.
\end{equation} 
All the smoothness arguments
referring to $\LhX$ equally apply to each
individual term of this decomposition. Moreover, it is clear that the three terms
occurring on the right hand side of (\ref{lhformula}) correspond to $\LhzXz$,
$\LhXzz$, $\LhzzX$, respectively. Therefore we will retain this notation also
in the case of arbitrary vector fields. It is immediate that $(\LhzXz
u)(\om,p,A)=\LX(u(\om,A))(p)$ where $\LX$ denotes the classical Lie derivative
on $\cTrsM$. On the other hand, in $\LhXzz u$ and $\LhzzX u$ the $p$-slot is
fixed and the differentiation process involves only the fiber
$\TrspM\cong(\R^n)^{r+s}$ as range space. At several places it will be
important to split $\LhX$ in the way just indicated (to wit, in \ref{lieembedding},
\ref{leibnizbasic}, \ref{t4t5lemma}, \ref{t4t5}). 

In the case of an arbitrary smooth vector field $X$ some work
is needed to prove that the embedding commutes with the Lie derivative.
In \ref{appendixA} (see (\ref{tolie}) and the remark following
(\ref{bulletliecommute})) we have defined the Lie derivative of 
$A\in \Gamma(\TO(M,N))$ with respect to any smooth vector fields $X,Y$. We now set
\begin{eqnarray*}
\LXY\Asr&=&\LXY A\otimes\dots\otimes (\Aad\circ\fl)+\dots\\
 &&\qquad\qquad\qquad \dots+A\otimes\dots\otimes \LXY (\Aad\circ\fl),
\end{eqnarray*}
where $\fl$ denotes the flip $(p,q)\mapsto(q,p)$. 
This conforms to viewing $\Asr$ as a section of the vector bundle over
$M\times N$ with fiber $L(\TsrpM,\TsrqN)$ at $(p,q)$ and the obvious transition
functions. We will use the notation $\LXY(\Asrpq{\,}\tit(p))$ 
rather than (the more precise) $\LXY(\Asrpq(\mathrm{pr}_1^*\tit(p,q)))$ 
for the Lie
derivative of the section $(p,q)\mapsto\Asrpq{\,}\tit(p)$ of the pullback bundle.

\begin{proposition}\label{lieembedding}
 The Lie derivative commutes with the embedding $\iors$, i.e., we have, for all smooth
vector fields $X$ and all $v\in\DprsM$,
 \begin{equation}
  \lh_X\iors(v)=\iors(\Lie_Xv).
 \end{equation}
\end{proposition}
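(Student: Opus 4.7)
My plan is to reduce the statement to the case of complete vector fields, for which (\ref{lieiotacomplete}) is already available, by means of a cutoff argument on $X$. The crucial observation is that both sides of the claimed identity, evaluated at a fixed triple $(\om,p,A)\in\ahat\times M\times\bhat$, depend only on the restriction of $X$ to a compact subset of $M$.

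To make this precise I would fix $(\om,p,A)$ and set $K:=\{p\}\cup\supp\om\cup\mathrm{pr}_1(\supp A)\cup\mathrm{pr}_2(\supp A)$, which is compact in $M$. Choosing $\chi\in\CinfM$ of compact support with $\chi\equiv 1$ on an open neighborhood $U$ of $K$, and setting $\tilde X:=\chi X$, produces a compactly supported (hence complete) smooth vector field coinciding with $X$ on $U$. Applying (\ref{lieiotacomplete}) to $\tilde X$ yields $\lh_{\tilde X}\iors(v)=\iors(\Lie_{\tilde X}v)$.

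Next, using the decomposition $\lh_X=\LhzXz+\LhXzz+\LhzzX$ from (\ref{lhattriple}), I would argue term by term that the left-hand side at $(\om,p,A)$ is unaffected by the replacement of $X$ with $\tilde X$: the first summand $\LX(\iors(v)(\om,A))(p)$ only involves $X(p)=\tilde X(p)$; the second summand $-\diff_1\iors(v)(\om,p,A)(\LX\om)$ only involves $\LX\om$, which equals $\Lie_{\tilde X}\om$ by locality of the Lie derivative on forms together with $X=\tilde X$ on $\supp\om$; and the third summand $-\diff_3\iors(v)(\om,p,A)(\LXX A)$ is unchanged provided $\LXX A=\Lie_{(\tilde X,\tilde X)}A$. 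For the right-hand side, the defining formula
\begin{equation*}
\big(\iors(\Lie_X v)(\om,A)\cdot\tit\big)(p)=\langle\Lie_X v,\theta(A,\tit,p)\otimes\om\rangle
\end{equation*}
tests $\Lie_X v$ against an $(s,r)$-density whose support is contained in $K$, so by locality of the distributional Lie derivative it agrees with the corresponding expression for $\tilde X$. Combining these localizations with the complete-case identity applied to $\tilde X$ would then give the claim at the arbitrary point $(\om,p,A)$, hence throughout $\ehrsM$.

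The hard part, I expect, is the verification that $\LXX A$ depends only on the restriction of $X$ to an arbitrarily small open neighborhood of the two projections of $\supp A$. This should follow from the flow-based definition of $\LXX$ given in \ref{appendixA}: for sufficiently small $\tau$ the flows of $X$ and $\tilde X$ agree on such a neighborhood, and the pair-pullback $(\FlXt,\FlXt)^*A$ of a transport operator is determined by the flow's action on the support of $A$, so the corresponding $\tau$-derivatives at $\tau=0$ coincide. The other two summands and the right-hand side are local by standard properties of the Lie derivative on forms, smooth tensor fields, and distributional tensor fields.
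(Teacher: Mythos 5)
Your proof is correct, but it takes a genuinely different route from the paper's. The paper argues directly from Definition \ref{lh}: it expands $(\lh_X\iors(v))(\om,p,A)\cdot\tit(p)$ into its three constituent terms and evaluates each one via the chain rule, obtaining the term $\iors(\Lie_X v)(\om,p,A)\cdot\tit(p)$ together with three contributions involving $\LXz\Asr$, $\LzX\Asr$ and $\LXX\Asr$ which cancel through the decomposition $\LXX=\LXz+\LzX$. You instead reduce to the complete case, for which (\ref{lieiotacomplete}) has already been established: fixing $(\om,p,A)$, you cut $X$ off to a compactly supported (hence complete) $\tilde X$ agreeing with $X$ on an open neighborhood of the compact set $K$ determined by $(\om,p,A)$, and use the locality in $X$ of the classical Lie derivative on sections, of $\LX\om$, of $\LXX A$, and of the distributional Lie derivative tested against a density supported in $K$, to replace $X$ by $\tilde X$ on both sides. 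This is shorter and conceptually transparent; what the paper's self-contained computation buys in exchange is explicit intermediate identities (in particular the passage from $\LX\circ v$ to $v\circ\LXz$) that are reused verbatim in the proof of Theorem \ref{T1}\,(i). One small imprecision in your sketch: the first summand $\LX(\iors(v)(\om,A))(p)$ depends on the $1$-jet of $X$ at $p$, not merely on the value $X(p)$; since your construction guarantees $\tilde X=X$ on an open neighborhood of $p$ the conclusion stands, but the stated justification should be adjusted accordingly.
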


\begin{proof}
By definition we have for all $\om\in\ahat$, $p\in M$ and $A\in\bhat$
\begin{align}\label{chr123}
\big(\LhX\iors(v)\big)(\om,p,A)=\LX&\big(\iors(v)(\om,A)\big)(p)\nn\\
&-{\mathrm d}_1\big(\iors(v)\big)(\om,p,A)(\LX\om)\\
&-{\mathrm d}_3\big(\iors(v)\big)(\om,p,A)(\LX A).\nn
\end{align}
We proceed by applying each term individually to $\tilde t\in{\mathcal
T}^s_r(M)$. We find by the chain rule
\begin{eqnarray}\label{1}
\lefteqn{\LX\big(\iors(v)(\om,A)\big)(p)\cdot\tilde t(p)}\nn\\
&&=\LX\big(\iors(v)(\om,A)\cdot \tilde
t\big)(p)-\big(\iors(v)(\om,A)\cdot\LX\tilde t\big)(p)\nn\\
&&=\LX\Big(\langle v(.),\Asr(p,.){\,}\tilde
t(p)\otimes\om(.)\rangle\Big)-\langle v(.),\Asr(p,.){\,}\LX\tilde
t(p)\otimes\om(.)\rangle\nn\\
&&=\langle v(.),\Big(\LXz\big(\Asr(p,.){\,}\tilde
t(p)\big)-\Asr(p,.){\,}\LX\tilde t(p)\Big)\otimes\om(.)\rangle\nn\\
&&=\langle v(.),(\LXz\Asr)(p,.){\,}\tilde t(p)\otimes\om(.)\rangle.
\end{eqnarray}
To see that $\LX\circ v=v\circ\LXz$ in the above calculation,
set $w(p,q):=\Asrpq{\,}\tit(p)$. Then we have $w\in\Ga_\mathrm{c}(M\times M,\prtS\TsrM)$,
with
$w^\vee\in\Cinf(M,\cTsrM)$ corresponding to $w$ according
to Lemma \ref{kriegl}.
On $\Ga_\mathrm{c}(M\times M,\prtS\TsrM)$ flow actions $(\FlXt,\FlYt)^*$
and Lie derivatives $\LXY$ are defined in complete analogy to the case of
transport operators (\ref{appendixA}). Since $\supp w\subseteq\supp A$ there
exists $\tau_0>0$ such that $(\FlXt,\FlYt)^*w$ is defined on $M\times M$ for all $\tau$
with $|\tau|\leq\tau_0$. By Proposition \ref{lierefinement}\,(2),
$\tau\mapsto (\FlXt,\FlYt)^*w$ is smooth into $\Ga_\mathrm{c}(M\times
M,\prtS\TsrM)$ and
$\LXY w=\ddtz (\FlXt,\FlYt)^*w$ in the (LF)-sense. 
Setting $Y=0$, it follows that $(\LXz\,w)^\vee(p)=\ddtz w^\vee(\FlXt p)$ in the (LF)-sense
in $(\ct^s_r)_\mathrm{c}(M)$.
From this we finally arrive at
\begin{align*}
\LX\lgl v(.),\Asr(p,.){\,}\tilde t(p)\otimes\om(.)\rgl
&=\LX\lgl v,w^\vee(p)\otimes\om\rgl\\
&=\ddtz\lgl v, w^\vee(\FlXt p)\otimes\om\rgl\\
&=\lgl v,\ddtz w^\vee(\FlXt p)\otimes\om\rgl\\
&=\lgl v,(\LXz\,w)^\vee(p)\otimes\om\rgl\\
&=\lgl v(.),\LXz(\Asr(p,.){\,}\tilde
t(p))\otimes\om(.)\rgl.
\end{align*}

For the second term on the right hand side of equation (\ref{chr123}) we obtain,
using the fact that $v$ is linear and continuous.
\begin{eqnarray}\label{2}
\lefteqn{{\mathrm d}_1\big(\iors(v)\big)(\om,p,A)(\LX\om)\cdot\tilde t(p)}\nn\\
&&=d\big[\om\mapsto\langle v(.),\Asr(p,.){\,}\tilde
t(p)\otimes\om(.)\rangle\big](\LX\om)\nn\\
&&=\langle v(.),\Asr(p,.){\,}\tilde t(p)\otimes\LX\om(.)\rangle\nn\\
&&=\langle v(.),\LzX\big(\Asr(p,.){\,}\tilde t(p)\otimes\om(.)\big)\rangle
-\langle v(.),\LzX\big(\Asr(p,.){\,}\tilde t(p)\big)\otimes\om(.)\rangle\nn\\
&&=-\langle\LX v(.),\Asr(p,.){\,}\tilde t(p)\otimes\om(.)\rangle-
\langle v(.),(\LzX\Asr)(p,.){\,}\tilde t(p)\otimes\om(.)\rangle\nn\\
&&=-\iors(\LX v)(\om,p,A){\,}\tilde t(p)-\langle
v(.),(\LzX\Asr)(p,.){\,}\tilde t(p)\otimes\om(.)\rangle.
\end{eqnarray}
Since $A\mapsto\Asr$ is the composition of a multilinear map with the diagonal map,
the third term on the right hand side of equation (\ref{chr123}) gives
\begin{eqnarray}\label{3}
\lefteqn{{\mathrm d}_3\big(\iors(v)\big)(\om,p,A)(\LXX A)\cdot\tilde t(p)}\nn\\
&&\hphantom{mmmmm}=d\big[A\mapsto\langle v(.),\Asr(p,.){\,}\tilde
t(p)\otimes\om(.)\rangle\big](\LXX A)\nn\\
&&\hphantom{mmmmm}=\langle v(.),\big(\LXX(\Asr)\big)(p,.){\,}\tilde
t(p)\otimes\om(.)\rangle.
\end{eqnarray}
Combining equations (\ref{1}), (\ref{2}), and (\ref{3}) we obtain the
result.
\end{proof}

Standard operations of tensor calculus carry over to
elements of $\ehrsM$. Thus, for $u_1 \in \ehrsM$, $u_2 \in \hat{\mathcal{E}}^{r'}_{s'}(M)$ we define the
tensor product $u_1\otimes u_2$ by
$$
(u_1\otimes u_2)(\omega,p,A) := (u_1(\omega,A)\otimes u_2(\omega,A))(p).
$$
Then clearly $u_1\otimes u_2 \in  \hat{\mathcal{E}}^{r+r'}_{s+s'}(M)$.
Moreover, if $C^i_j: \cTrsM \to \mathcal{T}^{r-1}_{s-1}(M)$ is any contraction
then for $u\in \ehrsM$ we define $C^i_j(u)\in \hat{\mathcal E}^{r-1}_{s-1}(M)$ by
$$
C^i_j(t)(\omega,p,A) := C^i_j(t(\omega,A))(p).
$$
Contraction $u_1\cdot u_2$ of dual fields $u_1\in\ehrsM$ and $u_2\in \hat{\mathcal E}^s_r(M)$
is then defined as a composition of the above operations. Notationally suppressing
the embedding $\sirs$, we obtain the special case $u\in \ehrsM$, $\tit \in {\mathcal T}^s_r(M)$:
$$
(u\cdot \tit)(\omega,p,A) := (u(\omega,A)\cdot \tit)(p).
$$
\begin{proposition}\label{leibnizbasic}
The Lie derivative $\LhX$ acting on tensor products of arbitrary fields and
on contractions of dual fields satisfies the Leibniz rule, i.e. we have
\begin{align*}
\LhX(u_1\otimes u_2)&=(\LhX u_1)\otimes u_2+u_1\otimes(\LhX u_2)
          &(u_1\in\ehrsM,\ u_2\in\hat\ce^{r'}_{s'}(M))\\
\LhX(u_1\cdot u_2)&=(\LhX u_1)\cdot u_2\hphantom{e}+\,u_1\cdot(\LhX u_2)
          &(u_1\in\ehrsM,\ u_2\in\hat\ce^s_r(M)).
\end{align*}
\end{proposition}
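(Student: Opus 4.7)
The plan is to use the decomposition $\LhX = \LhzXz + \LhXzz + \LhzzX$ from (\ref{lhattriple}) and to verify the Leibniz identity for each summand in isolation; by linearity, adding the three identities then yields the stated rules for $\LhX$.

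For $\LhzXz$: by (\ref{lhformula}) and the discussion following (\ref{lhattriple}), this operator acts as $(\LhzXz u)(\om,p,A) = \LX(u(\om,A))(p)$, where $\LX$ on the right is the classical Lie derivative on $\cTrsM$. Since by definition the tensor product in $\ehrsM$ satisfies $(u_1\otimes u_2)(\om,A) = u_1(\om,A)\otimes u_2(\om,A)$ (the pointwise classical tensor product of the smooth tensor fields $u_1(\om,A)$ and $u_2(\om,A)$, which are well-defined members of $\cTrsM$ resp.\ $\ct^{r'}_{s'}(M)$ by the isomorphism from Corollary \ref{expCF}) and since classical $\LX$ obeys the Leibniz rule on $\cTrsM$, the identity for $\LhzXz$ on tensor products is immediate. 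The same remark applies to contractions of a single field via $C^i_j$.

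For $\LhXzz$ and $\LhzzX$: these summands are the directional derivatives $-d_1 u(\om,p,A)(\LX\om)$ and $-d_3 u(\om,p,A)(\LX A)$ respectively. The tensor product $(u_1\otimes u_2)(\om,p,A) = u_1(\om,p,A)\otimes u_2(\om,p,A)$ is the composition of the pair $(u_1,u_2)$ with the bilinear fiberwise map
\[
\otimes_p: \TrspM \times (\mathrm{T}^{r'}_{s'})_pM \longrightarrow (\mathrm{T}^{r+r'}_{s+s'})_pM.
\]
Applying the chain rule together with the product rule for bounded bilinear maps in convenient calculus (\cite[3.18]{KM}) to each of $d_1$ and $d_3$ yields the Leibniz identity for $\LhXzz$ and $\LhzzX$ on tensor products. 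Summing the three pieces gives the first claim.

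For contractions of dual fields: any contraction $C^i_j$ is fiberwise linear, so it commutes with both $d_1$ and $d_3$; classical $\LX$ likewise commutes with $C^i_j$ on $\cTrsM$. Combined with the previous paragraphs, this shows $\LhX\circ C^i_j = C^i_j\circ\LhX$. Writing $u_1\cdot u_2 = C(u_1\otimes u_2)$ for the appropriate total contraction $C$ and combining this commutativity with the Leibniz rule already established for $\otimes$ yields the second identity. I do not anticipate a serious obstacle: everything reduces to classical tensor calculus applied slot by slot in the $(\om,A)$-variables via the identification $\ehrsM\cong\Cinf(\ahat\times\bhat,\cTrsM)$ and, in the $p$-slot, to the product rule for bilinear operations. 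The only point requiring minimal care is ensuring that the three pieces are combined with consistent sign conventions, which is automatic from (\ref{lhformula}).
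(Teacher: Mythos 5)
Your proof is correct and follows essentially the same route as the paper's: decompose $\LhX$ via (\ref{lhattriple}) into $\LhzXz+\LhXzz+\LhzzX$, invoke the classical Leibniz rule for the middle term, and treat the two directional-derivative terms by observing that $\otimes$ is a bounded bilinear operation fiberwise so that the product rule of convenient calculus applies (the paper cites \cite[5.5]{KM} for boundedness of bilinear maps being equivalent to smoothness, you cite the chain rule \cite[3.18]{KM}; these support the same step). Your handling of the contraction case, factoring $u_1\cdot u_2 = C(u_1\otimes u_2)$ and showing $\LhX$ commutes with the fiberwise-linear contraction $C$, is a slightly more explicit rendering of the paper's ``the proof for the contraction of dual fields proceeds along the same lines.''
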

\begin{proof}
We consider the three defining terms adding up to $\LhX$
according to (\ref{lhattriple}) separately. As to $\LhzXz$, we have
$\LhzXz(u_1\otimes u_2)(\om,p,A)=\LX((u_1\otimes u_2)(\om,A))(p)=
\LX(u_1(\om,A)\otimes u_2(\om,A))(p)$; for the
latter the classical Leibniz rule of course holds. Concerning
$\LhXzz$, we note that the corresponding terms are but directional derivatives
of the smooth functions $u_1\otimes u_2$ resp.\ $u_1$ resp.\ $u_2$. Since the
first originates from the remaining two by composition with a bounded
bilinear (equivalently, smooth, by \cite[5.5]{KM}) map the Leibniz rule holds
also in this case. {\it Mutatis mutandis}, the same arguments apply to
$\LhzzX$. The proof for the contraction of dual fields proceeds along the same
lines.
\end{proof}

For the proof that $\LhX$ respects moderateness resp.\
negligibility in Section \ref{dynamics} we will need an explicit expression for
$\diff_3\big((\om,p,A)\mapsto\LX (u(\om,A))(p)\big)(B)$, which will imply that 
directional
derivatives with respect to slots 1 resp.\ 3 can be interchanged with Lie
derivatives with respect to slot 2. We consider the case of
slots 2 and 3; the argument for slots 1 and 2 is similar. To this end, let
$\phi:\ehrsM\to\Cinf(\ahat\times\bhat,\cTrsM)$ denote the linear isomorphism
given by Lemma \ref{reprbasic}. Using 
$\phi$, the map $\LhzXz$
sending $u\in\ehrsM$ to ($(\om,p,A)\mapsto\LX( u(\om,A))(p))$
can be written as $\LhzXz=\phi^{-1}\circ(\LX)_*\circ\phi$ where 
$(\LX)_*(\tilde u):=\LX\circ \tilde u$ for $\tilde
u\in\Cinf(\ahat\times\bhat,\cTrsM)$. Recall that
$(\LX)_*$ maps $\Cinf(\ahat\times\bhat,\cTrsM)$
linearly into 
itself, due to the
boundedness (equivalently, smoothness) of $\LX$ on $\cTrsM$, with respect to the
(F)-topology.

\begin{lemma}\label{difflx}
For $B\in\bhat$ let $\diff_B:\ehrsM\to\ehrsM$ 
denote the directional
derivative defined by $(\diff_Bu)(\om,p,A)=\diff_3u(\om,p,A)(B)$. Then for 
any $X\in \mathfrak{X}(M)$,
\begin{equation*}
\diff_B\circ\LhzXz=\LhzXz\circ\diff_B,
\end{equation*}
hence, neglecting $\phi$,
$$\diff_3\big((\om,p,A)\mapsto\LX
(u(\om,A))(p)\big)(B)=\LX(\diff_2u(\om,A)(B))(p).
$$
\end{lemma}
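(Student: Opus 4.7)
The plan is to reduce the claim to the chain rule of convenient calculus applied to post-composition with the bounded linear operator $\LX$ on the Fréchet space $\cTrsM$. All the work is already done by the representation $\LhzXz=\phi^{-1}\circ(\LX)_*\circ\phi$ recorded just before the lemma, where $\phi:\ehrsM\to\Cinf(\ahat\times\bhat,\cTrsM)$ is the linear isomorphism of Lemma \ref{reprbasic}; I will simply transport the statement through $\phi$, apply the chain rule for $\LX\in L(\cTrsM,\cTrsM)$, and transport back.

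First, I identify how $\diff_B$ looks in the curried picture. Since $\phi$ is obtained by currying in the $p$-variable, it does not interfere with differentiation in the $A$-slot. Thus, writing $\tilde u:=\phi(u)\in\Cinf(\ahat\times\bhat,\cTrsM)$ and letting $\partial_B$ denote the partial directional derivative with respect to the $A$-slot in direction $B$, the operator $\phi\circ\diff_B\circ\phi^{-1}$ equals $\tilde u\mapsto \partial_B\tilde u$, where $(\partial_B\tilde u)(\om,A):=\diff_2\tilde u(\om,A)(B)\in\cTrsM$. This identification rests on the exponential law of Corollary \ref{expCF} (resp.\ Lemma \ref{reprbasic} together with Remark \ref{CDF}), which is a bornological isomorphism and therefore commutes with directional derivatives taken slotwise.

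Second, I apply the chain rule \cite[3.18]{KM} to the smooth map $A\mapsto (\LX\circ\tilde u(\om,\cdot))(A)=\LX(\tilde u(\om,A))$ at $A$ in direction $B$. Since $\LX:\cTrsM\to\cTrsM$ is linear and bounded (hence smooth with derivative equal to itself), one obtains
\[
\partial_B\big((\LX)_*\tilde u\big)(\om,A)=\LX\big(\partial_B\tilde u(\om,A)\big)=(\LX)_*(\partial_B\tilde u)(\om,A).
\]
Combining this with the identification above yields
\[
\phi\circ\diff_B\circ\LhzXz\circ\phi^{-1}=\partial_B\circ(\LX)_*=(\LX)_*\circ\partial_B=\phi\circ\LhzXz\circ\diff_B\circ\phi^{-1},
\]
and hence $\diff_B\circ\LhzXz=\LhzXz\circ\diff_B$. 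Evaluating both sides at $u\in\ehrsM$ and at $(\om,p,A)$, and unravelling the definitions of $\LhzXz$ and $\diff_B$, one reads off the displayed formula
$\diff_3((\om,p,A)\mapsto\LX(u(\om,A))(p))(B)=\LX(\diff_2 u(\om,A)(B))(p)$.

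The only point requiring care is the first step, namely verifying that $\diff_B$ on $\ehrsM$ really corresponds to the slotwise partial derivative $\partial_B$ on $\Cinf(\ahat\times\bhat,\cTrsM)$ under $\phi$; I expect this to be the sole technical obstacle, and it is handled once and for all by the exponential law used to set up $\phi$. Everything else is a one-line application of the chain rule for a bounded linear map, which is why the two differentiations decouple.
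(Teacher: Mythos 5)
Your proof is correct and follows essentially the same route as the paper's: transport through $\phi$ to $\Cinf(\ahat\times\bhat,\cTrsM)$, apply the chain rule \cite[3.18]{KM} to post-composition with the bounded linear $\LX$, and transport back. The one place where the paper is more explicit is the verification that $\phi\circ\diff_B=\tilde\diff_B\circ\phi$ (your ``first step'', which you correctly flag as the only technical point): the paper writes out the interchange of the Fr\'echet-limit in $\cTrsM$ with evaluation at $p$, justified by continuity of point evaluations on the (F)-space $\cTrsM$, whereas you invoke more abstractly that the exponential-law isomorphism is bornological and therefore commutes with slotwise directional derivatives. These are the same fact at different levels of abstraction; it would strengthen your write-up to record explicitly that the difference quotient converges in $\cTrsM$ (by smoothness of $\phi u$) and that evaluation at $p$ is continuous on $\cTrsM$, so the pointwise limit in $\TrspM$ equals the evaluated Fr\'echet limit.
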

\begin{proof}
For $B\in\bhat$ let $\tilde\diff_B:\Cinf(\ahat\times\bhat,\cTrsM)
\to\Cinf(\ahat\times\bhat,\cTrsM)$ denote the directional derivative defined by
$(\tilde \diff_B\tilde u)(\om,A)=\diff_2\tilde u(\om,A)(B)$.
We will show the following two relations:
\begin{align}
\tilde\diff_B\circ(\LX)_*&=(\LX)_*\circ\tilde\diff_B\label{difflx1}\\
\phi\circ\diff_B\label{difflx2}&=\tilde\diff_B\circ\phi. 
\end{align}
Transferring (\ref{difflx1}) by means of (\ref{difflx2}) and the defining
relation
\begin{align}
\phi\circ\LhzXz&=(\LX)_*\circ\phi
\end{align}
from $\Cinf(\ahat\times\bhat,\cTrsM)$ to $\ehrsM$ will
accomplish the proof.

(\ref{difflx1}) is a consequence of the chain rule
\cite[3.18]{KM}: For $\tilde u\in\Cinf(\ahat\times\bhat,\cTrsM)$, we obtain
$$(\tilde\diff_B\circ(\LX)_*)(\tilde u)
=\tilde\diff_B\,(\LX\circ \tilde u)
=\LX\circ(\tilde\diff_B \,\tilde u)
=((\LX)_*\circ\tilde\diff_B)(\tilde u).$$
(\ref{difflx2}), on the other hand, follows from the continuity of point
evaluations on the (F)-space $\cTrsM$: For $u\in\ehrsM$, we have
\begin{align*}
(\tilde\diff_B\,\phi\, u)(\om,A)(p)&=\lim\limits_{\tau\to0}
   \textstyle{\frac{1}{\tau}}\big[(\phi\, u)(\om,A+\tau B)-
                (\phi\, u)(\om,A)\big](p)\\
&=\lim\limits_{\tau\to0}
   \textstyle{\frac{1}{\tau}}\big[(\phi\, u)(\om,A+\tau B)(p)-
                (\phi\, u)(\om,A)(p)\big]\\
&=(\diff_B\,u)(\om,p,A)\\
&=(\phi\,\,\diff_B\,u)(\om,A)(p).
\end{align*}
Note that the first limit above is in the (F)-space $\cTrsM$ while the
second one refers to Euclidean topology in $\TrspM\cong(\R^n)^{r+s}$.
\end{proof}

\begin{remark}\label{sheafremark} 
To conclude this section we mention without proof some further
properties of $\ehrs$.
\begin{itemize}
\item[(i)] Since elements of $\ahat$ and $\bhat$ are compactly supported, there is an
obvious notion of restriction of any $u\in \ehrsM$ to open subsets of $M$.
\item[(ii)] For any $v\in \DprsM$, $\iors(v)$ vanishes on the same open subsets
of $M$ as $v$ does, hence $\supp \iors(v)$ is well-defined and equals
$\supp v$.
\item[(iii)] For coverings $\mathcal{U}$ of $M$ directed by inclusion ($U_1,\, U_2\in
\mathcal{U}$ $\Rightarrow$ $\exists U_3 \in \mathcal{U}$, $U_3\supseteq U_1\cup U_2$)
the usual sheaf properties hold for $\ehrs$. A local geometrical definition of the Lie derivative
on $\ehrsM$ with respect to arbitrary smooth vector fields can be based on this.
\end{itemize}
\end{remark}

\section{Smoothness of embedded distributions}\label{sectioniorsvsmooth}
The seemingly innocuous statement of $\iors(v)$ being smooth is, in fact, a deep
result involving the entire range of results assembled in Appendix B.
The difficulties in proving it reflect the interplay between the apparatus of
(smooth as well as distributional) differential geometry and calculus on
(infinite-dimensional) locally convex spaces. Observe that in the scalar case
treated in \cite{vim}, the question of smoothness of $\io(v)$ (for $v\in
\DpM$) reduces to the trivial statement that $\io(v)=v\circ\pro$, being
linear and continuous (hence bounded), is smooth on $\ahat\times M$.

The main difficulty becomes clear from the fact that $\iors$ has to
bridge the ``topology gap'' between two worlds: Its argument $v\in\DprsM$
relates to the domain of linear spaces carrying (F)- resp.\ (LF)-topologies and
their dual spaces whereas the relevant results on the basic space $\ehrsM$
(of which $\iors(v)$ is a member) hold
with respect to the canonical convenient vector space topology on spaces of
smooth functions denoted by the term (C)-topology in the sequel (cf.\
\cite{KM}).

One crucial step of the proof 
consists in getting a
handle on the pa\-ra\-me\-tri\-zed tensor field $\theta(A,\tit,p):
q \mapsto \Asr(p,q){\,}\tilde t(p)$ occurring in the definition of $\iors$,
the spreading of $\tit(p)$ (over $M$) via $A$ (cf.\ Section \ref{kin}). 
Recall that $\theta$ has already 
been used in the proof of Proposition \ref{muiota}. 
Proving the smoothness of
$\theta(A,\tit,p)$ as a function of its three arguments will be
accomplished by Lemmata \ref{fstern} resp.\ \ref{evsr} and Corollary
\ref{corevbar}
of Appendix B, providing the necessary information on spaces of sections
of vector bundles over (possibly infinite-dimensional) manifolds. 
For decomposing $\theta$
into manageable parts, we introduce the following terminology:
\begin{itemize}
\item
For manifolds $B'$ (possibly infinite-dimensional), $B$, a smooth map $f:B'\to B$ and a vector
bundle $E\stackrel{\pi}{\to}B$ let $f^*E$ denote the pullback bundle of $E$
under $f$ (cf.\ \ref{appendixA} and the discussion following Remark \ref{evsr}). 
We define the pullback operator $f^*:\Ga(B,E)\to\Ga(B',f^*E)$ as
follows: For $u\in\Ga(B,E)$ given, the pair $(\id_{B'},u\circ f)$ 
induces the smooth section $f^*u:B'\to f^*E$, 
$f^*u= p\mapsto (p,u(f(p)))$ of $f^*E$.
\item
For formalizing the spreading process based on the action of a transport
operator on tangent vectors we introduce the operator
\begin{eqnarray*}\label{evga10}
\ev^1_0:\, \Ga(\TO(M,N))\times\Ga(\poTM)&\kern-3pt\to&\kern-3pt\Ga(\ptTN)
\end{eqnarray*}
by defining, for $(p,q)\in M\times N$,
$$
(\ev^1_0(A,\ti\xi))(p,q):=\big((p,q)\;,\,A(p,q)
       {\,}\bar\xi(p , q)\big)
$$
where $A\in\Ga(\TOMN)$ and
$\ti\xi\in\Ga(\poTM)$ is of the form $(p,q)\mapsto((p,q),\bar\xi(p,q))$.
By $\evsr$ we denote the obvious extension of $\ev^1_0$ to the $(s,r)$-case,
with $\Asr$ acting fiberwise on a section $\ti\xi\in\Ga(\proS\TsrM)$.
\item
For manifolds $M,N$ and a vector bundle $E\stackrel{\pi}{\to}N$, we define the
operator $\evbar:\Ga(M\times N,\prtS E)\times M\to\Ga(N,E)$ by
$\evbar(u,p)(q):=\prt'(u(p,q))$ where $\prt':\prtS E\to E$ denotes the canonical
projection of the pullback bundle $\prtS E$ (cf.\ Appendix A).
\end{itemize}
With this notational machinery available, we are able to factorize
$\theta$ as
\begin{equation}\label{spreading}
\theta(A,\tit,p)=\Asr(p,.){\,}\tilde t(p)=\evbar(\evsr(A,\proS\tit),p).
\end{equation}

\begin{proposition}[Smoothness of $\iors(v)$]\label{iorsvsmooth}
 For any $v\in\DprsM$ the function $\iors(v)$ introduced in
Definition \ref{iors}\,(ii) is smooth, hence a member of $\ehrsM$.
\end{proposition}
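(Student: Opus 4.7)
My strategy is to exploit the factorization \eqref{spreading} of the spreading map $\theta$ together with the representation theorem to be provided by Lemma \ref{reprbasic} (equivalently, Corollary \ref{expCF}), which identifies $\ehrsM$ with $\Cinf(\ahat\times\bhat,\cTrsM)$. Thus the task reduces to showing that $(\omega, A)\mapsto[p\mapsto\iors(v)(\omega,p,A)]$ is a smooth map into $\cTrsM$ carrying its (F)-topology. This smoothness will follow from joint smoothness of $\theta$ supplied by Appendix B, combined with the fact that $v\in\DprsM$, being a continuous linear functional on the (LF)-space $\cTsrM\otimes_{\CinfM}\OmncM$, is bounded and therefore smooth by \cite[2.11]{KM}.

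First I would rewrite the defining formula via \eqref{spreading} as
\[
(\iors(v)(\omega, A)\cdot\tit)(p)
 = \langle v,\,\evbar(\evsr(A,\proS\tit),p)\otimes\omega\rangle
 \qquad(\tit\in\cTsrM),
\]
and observe that by Lemmata \ref{fstern} and \ref{evsr} the pullback operator $\proS$ and the evaluation operator $\evsr$ are smooth into the appropriate spaces of sections of pullback bundles over $M\times M$, and by Corollary \ref{corevbar} the same is true of $\evbar$. Chaining these yields joint smoothness of $(A,\tit,p)\mapsto\theta(A,\tit,p)$ into the compactly supported sections of $\TsrM$ with their (LF)-topology. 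Since the tensor product with $\omega$ is a bounded bilinear, hence smooth (\cite[5.5]{KM}), operation into the test space $\cTsrM\otimes_{\CinfM}\OmncM$, and $v$ acts linearly and continuously on that space, composition delivers joint smoothness of the scalar map
\[
(\omega,A,\tit,p)\mapsto\langle v,\,\theta(A,\tit,p)\otimes\omega\rangle.
\]

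Second, I would extract from this the required smoothness of $\iors(v)$ as a basic-space element. Because $\theta(A,\tit,p)=\Asr(p,.){\,}\tit(p)$ depends on $\tit$ only through $\tit(p)\in\TsrpM$, the pairing factors through evaluation at $p$ and unambiguously defines a tensor $\iors(v)(\omega,p,A)\in\TrspM$. For fixed $(\omega, A)$ the smoothness in $p$ together with locality (choosing $\tit$ locally as the dual basis sections of a frame) shows that $p\mapsto\iors(v)(\omega,p,A)$ lies in $\cTrsM$; smoothness of $(\omega, A)\mapsto[p\mapsto\iors(v)(\omega,p,A)]$ into $\cTrsM$ with its (F)-topology then follows by testing against arbitrary smooth sections and invoking the uniform boundedness principle \cite[30.3]{KM}. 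Applying Lemma \ref{reprbasic} finally yields $\iors(v)\in\ehrsM$. The main obstacle is the first step: bridging the ``topology gap'' between the (F)/(LF)-structures of the distributional and test spaces and the convenient calculus of \cite{KM} demands the smoothness of $\proS$, $\evsr$, and $\evbar$ between spaces of sections of pullback bundles over possibly infinite-dimensional bases, which is precisely what Appendix B is designed to deliver.
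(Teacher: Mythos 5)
Your proposal takes essentially the same route as the paper: factorize $\theta$ via \eqref{spreading}, apply Lemmata \ref{fstern} and \ref{evsr} and Corollary \ref{corevbar} from Appendix B to get smoothness of $\theta$, compose with the bounded bilinear tensor-product map and the continuous linear functional $v$, and invoke Lemma \ref{reprbasic} to conclude membership in $\ehrsM$. Two small points merit comment. First, once you have established smoothness of the four-slot map $(\omega,A,\tit,p)\mapsto\langle v,\theta(A,\tit,p)\otimes\omega\rangle$, your second paragraph's extraction step — passing to the two-slot map into $\cTrsM$ via a uniform boundedness argument — is superfluous: the paper goes directly through representation (4) of Lemma \ref{reprbasic} (and Remark \ref{explain05} explicitly flags (4) as the optimal choice for this proof), so the $\cc_3$-linearity observation you make at the start of the second paragraph is all that is needed before applying the lemma. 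Second, and more substantively, you state that the chain of Lemmata yields smoothness of $\theta$ into the \emph{compactly supported} sections of $\TsrM$ with the (LF)-topology; the paper instead targets $\cTsrM$ with the (F)-topology, and the passage there is delicate: Lemmata \ref{fstern} and \ref{evsr} give continuity with respect to (F)/(LF)-topologies, while Corollary \ref{corevbar} gives smoothness only with respect to the (C)-topology on the section space, so one must use that (C) is coarser than (F) to compose, and then invoke Corollary \ref{sectionKMeqF} to upgrade the (C)-smoothness of the composite back to (F)-smoothness on the target $\cTsrM$. You gesture at this ``topology gap'' in your closing sentence, but the actual bridging steps are precisely the content that makes the proof nontrivial, and your ``chaining these yields'' passes over them.
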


In addition to employing the factorization (\ref{spreading}),
the proof of Proposition \ref{iorsvsmooth} is based upon
an equivalent representation of the basic space $\ehrsM$
as a space of smooth four-slot functions taking $\om,p,A,\tit$ as arguments.
The benefit of such a representation should be clear from 
Definition \ref{iors}.
We abbreviate the property of being ``$\CinfM$-linear in the
$k$-th slot'' as being ``$\cc_k$-linear''.

\begin{lemma}\label{reprbasic}
The basic space $\ehrsM$ has the following equivalent representations which are
mutually isomorphic as linear spaces:
\begin{enumerate}
\item [(0)] $\{u\in\Cinf(\ahat\times M\times\bhat\,,\,\TrsM)\mid
u(\om,p,A)\in\TrspM\}$
\item[(1)] $\Ga(\ahat\times M\times\bhat,\prtS\TrsM)$
\item [(2)] $\Cinf(\ahat\times\bhat\,,\,\cTrsM)$
\item [(3)] $\{u\in\Cinf(\ahat\times\bhat\,,\,
\Cinf(\cTsrM,\CinfM))\mid u(\om,A) \textrm{ is $\cc_1$-linear}\}$
\item [(4)] $\{u\in\Cinf(\ahat\times\bhat\times
\cTsrM\times M\,,\,\R)\mid u \textrm{ is $\cc_3$-linear}\}$
\item [(5)] $\{u\in\Cinf(\ahat\times M\times\bhat\,,\,
\Cinf(\cTsrM,\R))\mid u(\om,p,A) \textrm{ is $\cc_1$-linear}\}$.
\end{enumerate}
The relations between corresponding elements $t^{[i]}$ from space ($i$),
respectively, of the above list are given by
\begin{align*}
((\om,p,A),u^{[0]}(\om,p,A))&=u^{[1]}(\om,p,A)\\
u^{[0]}(\om,p,A)&=u^{[2]}(\om,A)(p)\\
u^{[2]}(\om,A)\cdot\tit&=
   u^{[3]}(\om,A)(\tit)\hphantom{n}
                                    \\
u^{[3]}(\om,A)(\tit)(p)&=u^{[4]}(\om,A,\tit,p)=u^{[5]}(\om,p,A)(\tit)
\end{align*}
where $\om\in\ahat$, $p\in M$, $A\in\bhat$ and $\tit\in\cTsrM$.
\end{lemma}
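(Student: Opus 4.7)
The strategy is to establish the chain of isomorphisms $(0)\leftrightarrow(1)\leftrightarrow(2)\leftrightarrow(3)\leftrightarrow(4),(5)$ using only (a) the defining properties of the pullback bundle $\prtS\TrsM$, (b) repeated applications of the exponential law for convenient calculus assembled in Corollary \ref{expCF} and the results on sections of vector bundles from Appendix B, and (c) the standard $\CinfM$-module duality $\cTrsM\cong \Lin_\CinfM(\cTsrM,\CinfM)$. In each step it suffices to exhibit the indicated bijection and check that it transports smoothness in both directions; $\R$-linearity is preserved trivially because all maps involved are evaluation/restriction type operations.

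First I would dispose of $(0)\leftrightarrow(1)$, which is purely formal: a section of $\prtS\TrsM\to\ahat\times M\times\bhat$ is by definition a smooth map $(\om,p,A)\mapsto((\om,p,A),u^{[0]}(\om,p,A))$ with $u^{[0]}(\om,p,A)\in\TrspM$, i.e.\ precisely the defining data of $(0)$. Next, for $(0)\leftrightarrow(2)$ I would apply Corollary \ref{expCF} (exponential law with the base manifold $M$ in the middle slot and the convenient space $\ahat\times\bhat$ outside): a map $u^{[0]}\in\Cinf(\ahat\times M\times\bhat,\TrsM)$ with $u^{[0]}(\om,p,A)\in\TrspM$ is the same as a map $u^{[2]}\in\Cinf(\ahat\times\bhat,\cTrsM)$ via $u^{[2]}(\om,A)(p):=u^{[0]}(\om,p,A)$, since the fibre condition becomes automatic.

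The step $(2)\leftrightarrow(3)$ combines convenient-calculus exponentiation with the tensorial duality: the $\CinfM$-module isomorphism $\cTrsM\cong\Lin_\CinfM(\cTsrM,\CinfM)$ associates to $t\in\cTrsM$ the $\cc_1$-linear map $\tit\mapsto t\cdot\tit$. For the smoothness transfer one invokes $(F)$-space smoothness of $u^{[2]}$, the boundedness of the contraction $\cTrsM\times\cTsrM\to\CinfM$, and the uniform boundedness principle \cite[30.3]{KM} to see that $u^{[3]}(\om,A)(\tit)=u^{[2]}(\om,A)\cdot\tit$ depends smoothly on $(\om,A)$ into $\Cinf(\cTsrM,\CinfM)$; conversely, $\CinfM$-linearity of $u^{[3]}(\om,A)$ produces a tensor field by the same duality, with smoothness in $(\om,A)$ inherited. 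Finally, $(3)\leftrightarrow(4)$ and $(3)\leftrightarrow(5)$ are two further applications of the exponential law, rearranging the arguments $(\om,A,\tit,p)$; the $\CinfM$-linearity condition migrates verbatim to the slot in which $\tit$ appears.

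The main obstacle is the bookkeeping in $(2)\leftrightarrow(3)$: one must be careful that the $(F)$-space topology on $\cTrsM$ underlying the smoothness in (2) is compatible with the $(C)$-topology on $\Cinf(\cTsrM,\CinfM)$ governing the smoothness in (3). This is precisely the gap bridged by the uniform boundedness principle together with the fact that the contraction pairing $\cTrsM\times\cTsrM\to\CinfM$ is bounded bilinear and hence smooth in the sense of \cite{KM}; once this is in place, all remaining conversions are routine applications of Cartesian closedness of convenient calculus.
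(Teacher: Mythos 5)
Your overall route matches the paper's: $(0)\leftrightarrow(1)$ formal, $(1)\leftrightarrow(2)$ by the section-space exponential law, $(2)\leftrightarrow(3)$ by tensorial duality plus a topology/bornology argument, and $(3)\leftrightarrow(4)\leftrightarrow(5)$ by repeated use of \cite[27.17]{KM}. Two citation points, however, deserve attention. For $(1)\leftrightarrow(2)$ you cite Corollary \ref{expCF}, which only yields the direction $u^{[1]}\mapsto u^{[2]}$; the two-sided identification requires the full bornological isomorphism of Lemma \ref{kriegl} (combined with Corollary \ref{sectionKMeqF} to swap (C) for (F) on $\cTrsM$), which the paper invokes directly. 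You do gesture at ``the results on sections of vector bundles from Appendix B,'' so this is a matter of precision rather than substance.

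The real gap is in $(2)\leftrightarrow(3)$, and although you correctly single it out as the main obstacle, the argument you offer does not actually close it. Knowing that the pairing $\cTrsM\times\cTsrM\to\CinfM$ is bounded bilinear (hence smooth) gives the forward direction: every $u^{[2]}$ yields a $u^{[3]}$ taking values in $\Lin^b_\CinfM(\cTsrM_F,\CinfM_C)$. What is needed for the reverse direction, and what the uniform boundedness principle alone does not provide, is that the identity map $\cTrsM\to\Lin^b_\CinfM(\cTsrM_F,\CinfM_C)$ ($t\mapsto(\tit\mapsto t\cdot\tit)$) is a \emph{bornological} isomorphism with respect to the (C)-topologies on both sides; otherwise smoothness into $\Lin^b_\CinfM(\cTsrM,\CinfM)$ need not pull back to smoothness into $\cTrsM$. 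This is exactly the content of Theorem \ref{actingondual}, whose proof is itself non-trivial: it runs through a cycle of six equivalent boundedness conditions, uses the uniform boundedness principle of \cite[5.18 and 30.1]{KM} twice, and requires the auxiliary Lemma \ref{banach} to guarantee that the relevant normed subspaces are Banach (a hypothesis of the UBP as formulated in \cite[5.22]{KM}). Your sketch presents this as a routine consequence of bounded bilinearity and UBP, which materially understates the work involved and leaves the converse implication unjustified. To complete your proof you would need either to prove Theorem \ref{actingondual} or to cite it explicitly.
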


\begin{remark}\label{bemeins}
(i) Note that, even for finite-dimensional $M$, (1) requires a theory of vector
bundles over infinite-dimensional smooth manifolds (in fact, over $\ahat\times
M\times\bhat$ in the case at hand); see the remarks preceding Lemma
\ref{kriegl} in Appendix B.

(ii) In order to give meaning to the various notions of smoothness occurring in
(0)--(5) of the preceding lemma,
we have to specify appropriate locally convex topologies resp.\
bornologies on the spaces involved. To this end,
we equip $\ahat$ and $\bhat$ with their
respective (LF)-topologies and $\cTsrM$ with its usual (F)-topology (recall our
convention stated in Section \ref{notation} for $M$ non-separable). On the
other hand, whenever a space of smooth functions such as $\Cinf(.,..)$ or
$\cTrsM$ appears at the second slot of some
$\Cinf$(...,....) it
carries the locally convex topology (C) defined in Appendix B. This is
indispensable for legitimizing the applications of
\cite[27.17]{KM} resp.\ of Lemma \ref{kriegl}
which are to follow. Whenever an explicit declaration of the
topology in question is needed we will use subscripts as, e.g., in $\CinfM_F$
resp.\ $\CinfM_C$.

(iii) In (4), $\cc_3$-linearity (resp.\ $\cc_1$-linearity in (5)), in fact, are
to be understood as
\begin{align*}
u(\om,A,f\cdot\tit,p)&:=u(\om,A,\tit,p)\cdot f(p)\\[-12pt]
 \intertext{resp.\ }\\[-30pt]
u(\om,p,A)(f\cdot\tit)&:=u(\om,p,A)(\tit)\cdot f(p)
\end{align*}
with $f\in\CinfM$, in order to guarantee compatibility with (3) which, in turn
could also be written as
$\Cinf(\ahat\times\bhat\,,\,\Lin^b_\CinfM(\cTsrM,\CinfM))$ where
$\Lin ^b_\CinfM(.,..)$ denotes the subspace of
$\Cinf(.,..)$ of $\CinfM$-linear
bounded (hence smooth, cf.\ \cite[2.11]{KM}) functions. \cite[5.3]{KM} shows
that this does not cause any ambiguity as to the meaning of a subset
of $\Lin^b_\CinfM(\cTsrM,\CinfM)$ being
(C)-bounded and, hence, of a mapping into that space being smooth.

(iv) (0) is precisely the expression for $\ehrsM$ given in Definition
\ref{basicspace}; (2) corresponds to writing $u(\om,A)(p)=u(\om,p,A)$,
introduced after Definition \ref{basicspace}.
As to the other items, cf.\ Remark \ref{explain05}.
\end{remark}

\medskip
{\bf Proof of Lemma \ref{reprbasic}.}
The form of the pullback bundle $\prtS\TrsM$
occurring in (1) shows that its smooth sections are precisely given by maps as
in (0) (compare also the discussion preceding Corollary \ref{corevbar}), the
correspondence being as stated in the lemma. The equality of (1) and (2)
as well as the relation
$u^{[1]}(\om,p,A)=((\om,p,A),u^{[2]}(\om,A)(p))$ are immediate from Lemma
\ref{kriegl}.
Moving on to (2)--(5), it is clear from the given relations that passing from
$u^{[i]}$ to
$u^{[i+1]}$ resp.\ vice versa yields maps having
appropriate domains, ranges (disregarding smoothness) and algebraic properties
for the cases $i=3,4$. From \cite[27.17]{KM} we conclude that
$u^{[4]}$ is smooth if and only if $u^{[5]}$ is smooth and takes {\it smooth}
functions on $\cTsrM$ as values on triples $(\om,p,A)$,
due to $u^{[5]}=(u^{[4]})^\vee$ in the terminology of \cite[3.12]{KM}, with
respect to the variable $\tit$ getting
separated from the variables $\om,p,A$. A twofold application of the same
argument achieves the transfer of smoothness between $u^{[3]}$ and $u^{[4]}$.

It remains to discuss $i=2$. Observe that the assignment $t\mapsto(\tit\mapsto
t\cdot\tit)$
embeds $\cTrsM$ into the space of $\CinfM$-linear bounded (hence smooth)
maps from $\cTsrM_F$ into $\CinfM_F$. (C) being
weaker than the (F)-topology on $\CinfM$, we obtain
smoothness from $\cTsrM_F$ into $\CinfM_C$,
as required for (3). Now we have the chain of inclusions
$$
\cTrsM\subseteq\Lin^b_\CinfM(\cTsrM_F,\CinfM_C)
         \subseteq\Lin_\CinfM(\cTsrM,\CinfM),
$$
where $\Lin_\CinfM(.,..)$ denotes the respective space of {\em all} 
$\CinfM$-linear maps. 
$\cTrsM$ being isomorphic to $\Lin_\CinfM(\cTsrM,\CinfM)$ via the assignment specified above,
all three spaces in the chain are, in fact,
identical. Moreover, by Theorem \ref{actingondual} of Appendix B, the
corresponding (C)-topologies on
$\cTrsM$ and $\Lin^b_\CinfM(\cTsrM,\CinfM)$ have the same bounded sets,
showing that also the
spaces given by
(2) and (3) are identical.
\ep
\medskip

\begin{remark}\label{explain05}
Observe that each of (0)--(5) serves a distinct yet prominent purpose:
(0) was used in introducing the basic space (cf.\ Definition
\ref{basicspace}), with a view to representing best the intuitive picture of a
(representative of a) generalized tensor field as a section of $\TrsM$ depending
on additional parameters $\om,A$. The drawback of (0) consists in the fact that
the range space $\TrsM$ is not a linear space, hence
$\Cinf(\ahat\times M\times\bhat\,,\,\TrsM)$ is not a (convenient) vector space.
Making up for this deficiency, (1) opens the gates to applying the apparatus of
infinite dimensional differential geometry as provided by \cite[Sec.\ 27--30]{KM}. 
In particular, it paves the way to (2)--(5):
(2) is optimal for defining $\sirs$ by $\sirs(t)(\om,A):=t$
(cf.\ Definition \ref{iors}\,(i)). (3) provides the crucial intermediate
step bringing $\tit$ into play. Concerning an
explicit definition of $\iors$, (0)--(2) are flawed by not providing a slot
for inserting $\tit$. Among (3)--(5), (5) (which, indeed, was used in (ii) of
Definition \ref{iors}) seems to be the optimal choice due to having $\om,p,A$ as
primary arguments and $\tit$ to be acted upon, yet also (3) and (4) are capable
of doing the job. As
to proving smoothness of $\iors(v)$, finally, (0) resp.\ (1) resp.\ (4) are to
be preferred, relying exclusively on the well-known (F)- resp.\ (LF)-topologies.
Altogether, for establishing smoothness of
$\iors(v)$, (4) turns out to be the best choice.
\end{remark}

Now, having Lemma \ref{reprbasic} at our disposal, we shall
demonstrate that $\iors(v)$ is smooth.

\medskip
{\bf Proof of Proposition \ref{iorsvsmooth}.}
By Lemma \ref{reprbasic}
it suffices to show that for $v\in\DprsM$, $\iors(v)$ is a member of (4) as
defined in Lemma \ref{reprbasic}. The crucial step of the proof consists in
establishing $\theta:(A,\tit,p)\mapsto(q\mapsto \Asr(p,q){\,}\tit(p))$ to be
smooth as a map $\theta:\bhat_{LF}\times\cTsrM_F\times M \to \cTsrM_F$. Once
this has been achieved, it suffices to note that the bilinear map
$$\cTsrM_F\times\OmncM_{LF}\ni(\ti\tit,\om)\mapsto\ti\tit\otimes\om\in
     (\cTsrM\otimes_\CinfM\OmncM)_{LF}$$
is bounded, hence smooth (which is immediate from an inspection of the seminorms
defining the (F)- resp.\ (LF)-topologies on section spaces of vector bundles,
cf.\ (\ref{seminormssection})) to conclude, finally, that
$$
\iors(v)^{[4]}(\om,A,\tit,p)=\lgl v, \theta(A,\tit,p)\otimes\om\rgl
$$
is a smooth function on
$\ahat_{LF}\times\bhat_{LF}\times\cTsrM_{F}\times M$, due to the
continuity (hence boundedness, hence smoothness) of $v$.

To see the smoothness of $\theta$, we write, using (\ref{spreading}),
$$
\theta(A,\tit,p)=\evbar(\evsr(A,\proS\tit),p).
$$
By Lemmata \ref{fstern} and \ref{evsr}  we obtain
continuity (hence boundedness resp.\ smoothness) 
of $(A,\tit)$ $\mapsto$
$(A,\proS\tit)\mapsto\evsr(A,\proS\tit)$ with respect to the (LF)-
resp.\ (F)-topologies, while Corollary \ref{corevbar} yields smoothness of
$(\evsr(A,\proS\tit),p)$ $\mapsto$
$\evbar(\evsr(A,\proS\tit),p)=\theta(A,\tit,p)$ with respect to the
(C)-topologies on the section spaces. (C) being weaker than (F), we can combine
both smoothness statements to obtain the smoothness of $\theta$ with respect to
the (C)-topology on the target space $\cTsrM$. Finally,
Corollary \ref{sectionKMeqF} permits us to replace the (C)-topology on $\cTsrM$ by
the (F)-topology.
\ep
\medskip
\begin{remark}\label{titfree}
Based upon a modification of the map $\theta$ employed above, it is possible to
arrive at a definition of the embedding $\iors:\DprsM\to\ehrsM$ not
explicitly containing $\tit\in\cTsrM$: Using results of \cite{mgrepdistr}, 
every distribution $v\in\DprsM$ can be
represented as a bounded resp.\ continuous $\CinfM$-linear map $v^\vee$ from
$\OmncM$ into $\cTsrM'$, the topological dual of the (F)-space $\cTsrM$. The
relation between $v$ and $v^\vee$ is given by $v(\tit\otimes\om)=\lgl
v^\vee(\om),\tit\rgl$, for $\om\in\OmncM$ and $\tit\in\cTsrM$. Introducing the
spreading operator
$\spr:\bhat\times M\to\Lin^b(\cTsrM,\cTsrM)$ by
$$
\spr(A,p)(\tit):=\theta(A,\tit,p)
$$
we obtain $\iors(v)(\om,p,A)=v^\vee(\om)\circ\spr(A,p)$ (where $\iors(v)$ is to
be understood as $\iors(v)^{[5]}$). Smoothness of $\spr$ follows from 
smoothness of $\theta$ (cf.\ the proof of Proposition \ref{iorsvsmooth}) via
the exponential law in \cite[27.17]{KM}.
\end{remark}

\section{Dynamics}\label{dynamics}
We now turn to the analytic core of our approach: the quotient construction of
tensor algebras of generalized functions displaying maximal 
compatibility properties with respect to smooth and distributional tensor fields.
As in the scalar case (\cite{vim}, see Section \ref{scalar}) our approach is based on singling out
subspaces of
moderate resp.\ negligible maps in the basic space $\ehrsM$ by requiring
asymptotic estimates
of the derivatives of representatives when evaluated along smoothing kernels
(Definition \ref{kernels}). We thereby adhere to the basic strategy (\cite[Ch.\ 9]{found}) 
of separating the basic definitions (the kinematics, in our current
terminology) from the testing (of the asymptotic estimates underlying
the quotient construction of the space of generalized tensor fields, or, for short,
the dynamics).
Since the representatives of generalized tensors depend not only on
points $p\in M$ and $n$-forms $\omega$ as in the scalar case \cite{vim} but also
on transport operators $A$, a new feature of the following construction is that
derivatives with respect to $A$ will have to be taken into account as well.

The notion of the core of a transport operator plays an important role in our
construction (in particular, in Lemma \ref{00dlemma} below and its applications).
In what follows, for any $U\subseteq M$, $U^\circ$ denotes the interior of
$U$.
\begin{definition}\label{coredef}
For any transport operator $A\in \bhat$ we define the core of $A$ by
$$
\core(A):=\{p\in M \mid A(p,p) = \mathrm{id}_{T_pM}\}^\circ.
$$
\end{definition}
\begin{remark}\label{corerem}
Given $K \comp M$ there always exists some $A\in \bhat$ with $K\subseteq \core(A)$.
Clearly such an $A$ can be obtained by gluing together local identity matrices.
For a more geometrical approach, choose any Riemannian metric $g$ on $M$ and 
denote by $r(p)$ the injectivity radius at
$p$ with respect to $g$. On the open neighborhood $W:= \{(p,q) \mid q \in B_{r(p)}(p)\}$ of the
diagonal in $M\times M$ we define a transport operator $A'$ by letting $A'(p,q)$ be parallel transport
along the unique radial geodesic in $B_{r(p)}(p)$ from $p$ to $q$. Now choose some $\chi \in
\mathcal{D}(W)$ with $\chi(p,p) = 1$ for all $p$ in a neighborhood of $K$. Then we may set $A:=\chi A'$.
\end{remark}

\begin{definition}\label{bhatumdef}
For $A\in \bhat$ we define the kernel of $A$ by
$$
\ker(A) := \{p\in M \mid A(p,p) = 0\}.
$$
If $U\subseteq M$ we set
$$\hat {\mathcal B}_U(M) := \{A\in \bhat \mid U \subseteq \ker(A)\}.
$$
\end{definition}
Note that for any $A\in \bhat$, $\LX A \in \hat {\mathcal B}_{\core(A)}(M)$ (which will be
used in Lemma \ref{t4t5lemma}). 
Based on these notions we are now ready to introduce the basic building
blocks of our construction:
\begin{definition} \label{moddef}
An element $u\in \ehrsM$ is called moderate if it satisfies the following
asymptotic estimates:\\
$\forall K \comp M$ $\forall A \in \bhat$ with $K\comp
\core(A)$\\
$\forall j\in \N$ $\forall B_1,\dots, B_j \in \hat {\mathcal B}_{\core(A)}(M)$\\
$\forall l\in \N_0$ $\exists N\in \N_0$ $\forall X_1,\dots,X_l \in
\mathfrak{X}(M)$
$\forall \Phi \in \atil$:
$$
\sup_{p\in K} \|\Lie_{X_1}\dots \Lie_{X_l} (\mathrm{d}_3^j
u(\Phi_{\eps,p},p,A)(B_1,\dots,B_j))\|_h
= O(\eps^{-N}) \qquad(\eps \to 0).
$$
The space of moderate tensor fields is denoted by
$\ehrsmM$.
\end{definition}
In this definition, $\|\,\|_h$ denotes the norm induced on the fibers of $\TrsM$
by any
Riemannian metric (changing $h$ does not affect the asymptotic estimates).
In the case $r=s=0$, $\|\,\|_h$ is to be replaced by the absolute value in
$\R$.
\begin{definition}\label{negdef}
An element $u\in \ehrsmM$ is called negligible if\\
$\forall K \comp M$ $\forall A \in \bhat$ with $K\comp
\core(A)$\\
$\forall j\in \N$ $\forall B_1,\dots, B_j \in  \hat {\mathcal
B}_{\core(A)}(M)$\\
$\forall l\in \N_0$ $\forall m\in \N_0$ $\exists k\in \N_0$ $\forall
X_1,\dots,X_l \in \mathfrak{X}(M)$
$\forall \Phi \in \tilde{\mathcal{A}}_k(M)$:
$$
\sup_{p\in K} \|\Lie_{X_1}\dots \Lie_{X_l} (\mathrm{d}_3^j
u(\Phi_{\eps,p},p,A)(B_1,\dots,B_j))\|_h
= O(\eps^{m}) \qquad(\eps \to 0).
$$
The space of negligible tensor fields is denoted by $\nhrsM$.
\end{definition}
The Lie derivatives in the asymptotic
estimates in Definitions \ref{moddef} and \ref{negdef} are to be understood as
$\Lie_{X_1}\dots \Lie_{X_l}$ acting on the smooth section
$p\mapsto\diff_3^ju(\Phi_{\eps,p},p,A)(B_1,\dots,B_j)$ of the
vector bundle $\TrsM$. The fact that the $B_i$ in Definitions \ref{moddef}
and \ref{negdef} are supposed to belong to $\hat {\mathcal B}_{\core(A)}(M)$
signifies their role as ``tangent vectors'' when differentiating with respect
to $A$.

Our first aim is to explore the relation between the ``scalar'' spaces $\ehzzmM$,
$\nhzzM$ and their counterparts $\ehmM$ and $\nhM$ from Definition \ref{modnegscalar}. 
The following basic
lemma introduces a reduction principle that will be referred to repeatedly in what follows.
\begin{lemma} (Reduction) \label{00dlemma}
Let $u\in \ehzzM$. Then for each $j\in \N_0$ and each 
$(A,B_1,$ $\dots,$ $B_j)
\in \hat {\mathcal B}(M)^{j+1}$, the map
$$
(\omega,p) \mapsto \mathrm{d}^j_3 u(\omega,p,A)(B_1,\dots,B_j)
$$
is a member of $\ehM$. It is also an element of $\eh(\core(A))$,
when restricted accordingly.\\
Moreover, $u\in \ehzzmM$ if and only if for all $j\in \N_0$, for all $A\in \bhat$
and all $B_1,\dots,B_j \in \hat {\mathcal B}_{\core(A)}(M)$ we have
$$
(\omega,p) \mapsto \mathrm{d}^j_3 u(\omega,p,A)(B_1,\dots,B_j) \in
\ehm(\core(A)).
$$
Analogous statements hold for $\nhzz(M)$ and $\nhM$.
\end{lemma}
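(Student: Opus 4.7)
The plan is to decompose the lemma into three assertions and address each in turn: (a) the smoothness of the iterated directional derivative, (b) the moderateness characterization, (c) the analogous negligibility characterization. Part (a) will follow from convenient calculus; (b) and (c) will reduce to a direct translation of the asymptotic inequalities together with a cutoff/gluing argument that bridges the mismatch between ``test data on $M$'' and ``test data on $\core(A)$''.

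For (a), starting from $u \in \Cinf(\ahat \times M \times \bhat)$, I would iterate the chain rule (\cite[3.18]{KM}) of convenient calculus to conclude that $(\omega, p, A) \mapsto \diff_3^j u(\omega, p, A)(B_1, \ldots, B_j)$ is smooth for any fixed $B_1, \ldots, B_j \in \bhat$. Freezing $A$ yields a member of $\ehM$; restriction to the open set $\core(A)$ lies in $\eh(\core(A))$ by Remark~\ref{sheafremark}.

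The core of the argument is (b). Fix $A \in \bhat$, $j \in \N_0$, and $B_1, \ldots, B_j \in \hat{\cb}_{\core(A)}(M)$, and write $R(\omega, p) := \diff_3^j u(\omega, p, A)(B_1, \ldots, B_j)$. The key observation is that, at these fixed arguments, the asymptotic bound built into Definition~\ref{moddef} is formally identical to the scalar moderateness bound of Definition~\ref{modnegscalar}(i) applied to $R$; the only discrepancy is that the test vector fields $X_1, \ldots, X_l$ and the smoothing kernel $\Phi$ live on $M$ rather than on $\core(A)$. I would therefore show that, whenever $K \comp \core(A)$, these two sets of test data can be matched on a neighborhood of $K$ for small $\eps$. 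For the direction ``$\Rightarrow$'', I would extend scalar test data $(Y_i, \Psi)$ on $\core(A)$ to test data on $M$ by multiplying $Y_i \in \mathfrak{X}(\core(A))$ by a suitable cutoff (which leaves the Lie derivative unchanged on a neighborhood of $K$) and by gluing $\Psi \in \tilde\ca_0(\core(A))$ with a reference kernel via a smooth cutoff in $p$ equal to $1$ on some $K'$ with $K \comp K' \comp \core(A)$. For ``$\Leftarrow$'', Definition~\ref{kernels}(1)(i) guarantees that for $p \in K$ and $\eps \leq \eps_0$ any $\Phi \in \atil$ satisfies $\supp \Phi(\eps, p) \subseteq \core(A)$, and a symmetric cutoff/gluing construction yields a $\Psi \in \tilde\ca_0(\core(A))$ that coincides with $\Phi$ on the asymptotically relevant region.

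The hard part will be verifying that these extension and restriction procedures preserve the defining conditions of the smoothing kernel spaces, most notably the asymptotic bound of Definition~\ref{kernels}(1)(ii) and, for part (c), the moment conditions of Definition~\ref{kernels}(2). Because the cutoffs leave the original kernel intact on a neighborhood of $K$ for small $\eps$, those estimates transfer automatically, while the tail behavior is controlled by the reference kernel. The negligibility claim then proceeds along exactly the same lines, replacing Definition~\ref{moddef} by Definition~\ref{negdef} and $\tilde\ca_0(\core(A))$ by the refined spaces $\tilde\ca_k(\core(A))$, with the order $k$ being determined by the target decay order $m$ precisely as in the scalar theory.
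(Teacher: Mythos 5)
Your proposal is correct and follows the same route as the paper, namely reducing the tensor-level estimates to the scalar ones by unwinding the quantifiers once $A$ and $B_1,\dots,B_j$ are fixed. The paper's own proof consists of the single sentence ``This is immediate by inspecting Definitions \ref{moddef}, \ref{negdef} above and the corresponding Definitions \ref{modnegscalar}\,(i) and (ii)'', thereby treating as self-evident the very point you rightly identify as the crux, namely passing between test data $(X_i,\Phi)$ on $M$ and test data on $\core(A)$. The cutoff/gluing construction you sketch --- splicing a kernel on $\core(A)$ with a reference kernel on $M$ via a cutoff in $p$ (and, for the converse direction, an additional cutoff in $\eps$ to control the region where the supports may leave $\core(A)$) --- is exactly the argument the authors carry out explicitly later in the proof of Proposition \ref{wwloc}, so your added detail is sound, consistent with the paper's toolkit, and arguably deserves to appear here rather than be left implicit.
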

\begin{proof}
This is immediate by inspecting Definitions \ref{moddef}, \ref{negdef} above and the
corresponding Definitions \ref{modnegscalar}\,(i) and (ii).
\end{proof}
As a first consequence we retain the important fact that negligibility for
elements of $\ehzzmM$ can be characterized without resorting to derivatives with
respect to slots 1 and 2:
\begin{corollary} \label{12300}
Let $u\in \ehzzmM$. Then $u\in \nhzzM$ if and only if \\
$\forall K \comp M$ $\forall A \in \bhat$ with $K\comp
\core(A)$\\
$\forall j\in \N$ $\forall B_1,\dots, B_j \in \hat {\mathcal B}_{\core(A)}$\\
$\forall m\in \N_0$ $\exists k\in \N_0$ $\forall \Phi \in
\tilde{\mathcal{A}}_k(M)$:
$$
\sup_{p\in K} \|\mathrm{d}_3^j u(\Phi_{\eps,p},p,A)(B_1,\dots,B_j)\|_h
= O(\eps^{m}) \qquad(\eps \to 0).
$$
\end{corollary}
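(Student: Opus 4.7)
The forward implication is immediate: specializing Definition \ref{negdef} to $l=0$ (no Lie derivatives with respect to slots 1 and 2) yields exactly the estimate in the corollary. The content of the statement is therefore the converse: assuming moderateness of $u$ together with the ``derivative-free'' estimate, we have to recover the full negligibility tests involving arbitrary Lie derivatives $\Lie_{X_1}\dots\Lie_{X_l}$.

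To carry this out, fix the data $K\comp M$, $A\in\bhat$ with $K\comp\core(A)$, $j\in\N$, and $B_1,\dots,B_j\in\hat{\mathcal B}_{\core(A)}(M)$. The Reduction Lemma \ref{00dlemma} allows us to switch from the tensorial variable to a purely scalar object: since $u\in\ehzzmM$, the map
\[
v:(\om,p)\mapsto \diff_3^j u(\om,p,A)(B_1,\dots,B_j)
\]
belongs to $\ehm(\core(A))$ when restricted to $\core(A)$. The hypothesis of the corollary, read through this identification, states precisely that for every $m\in\N_0$ there exists $k\in\N_0$ such that for all $\Phi\in\tilde\ca_k(M)$,
\[
\sup_{p\in K}|v(\Phi_{\eps,p},p)|=O(\eps^m)\qquad(\eps\to 0),
\]
which is the $l=0$ case of scalar negligibility of $v$ on $\core(A)$.

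At this point the problem has been reduced to the following purely scalar assertion: a moderate scalar field on a manifold whose negligibility estimate holds without $p$-derivatives is in fact negligible. This is exactly the characterization established within the scalar theory of \cite{vim} (ultimately resting on the corresponding local statement from \cite{found}), so we may invoke it verbatim to conclude that $v\in\nh(\core(A))$, i.e.\ for all $l\in\N_0$ and all $m\in\N_0$ there is some $k\in\N_0$ such that for every $X_1,\dots,X_l\in\XM$ and every $\Phi\in\tilde\ca_k(M)$,
\[
\sup_{p\in K}|\Lie_{X_1}\dots\Lie_{X_l}v(\Phi_{\eps,p},p)|=O(\eps^m).
\]
Translating this back via Lemma \ref{00dlemma} (the Lie derivatives act on the smooth section $p\mapsto\diff_3^j u(\Phi_{\eps,p},p,A)(B_1,\dots,B_j)$ of the trivial bundle in the scalar case) yields the full tensorial estimate of Definition \ref{negdef} for $u$, with the given choice of $K,A,j,B_1,\dots,B_j$. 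Since these were arbitrary, $u\in\nhzzM$.

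The only nontrivial ingredient is the derivative-free characterization of scalar negligibility, for which we rely on \cite{vim}; everything else is a direct application of the Reduction Lemma, and so the main obstacle is simply to ensure that the scalar result from \cite{vim} is applied on the open submanifold $\core(A)$ with the correct class of smoothing kernels, which poses no difficulty since $K\comp\core(A)$ and $\tilde\ca_k(M)$ restricts naturally.
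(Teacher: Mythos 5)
Your proof is correct and follows exactly the same route as the paper: reduce to the scalar setting via Lemma \ref{00dlemma} and then invoke the derivative-free characterization of scalar negligibility from \cite{vim} (the paper cites this as \cite[Cor.\ 4.5]{vim}). Your write-up simply spells out in more detail the reduction step that the paper's one-line proof leaves implicit.
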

\begin{proof}
This follows from Lemma \ref{00dlemma} and \cite[Cor.\ 4.5]{vim}.
\end{proof}

Next, in order to exploit these relations also for general $r$ and $s$, we
introduce
a ``saturation principle'' which characterizes moderateness and negligibility of
general tensor fields in terms of scalar fields obtained by saturating
$(r,s)$-tensor fields
with dual (smooth) $(s,r)$-fields. 
\begin{proposition}\label{satprop} (Saturation)
Let $u\in \ehrsM$. The following are equivalent:
\begin{itemize}
\item[(i)] $u\in \ehrsmM$.
\item[(ii)] For all $\tit \in {\mathcal T}^s_r(M)$, $u\cdot\tit\in \ehzzmM$.
\end{itemize}
An analogous statement holds for $\nhrsM$ and $\nhzzM$.
\end{proposition}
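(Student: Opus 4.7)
The strategy is to prove each direction via localization, with the Reduction Lemma~\ref{00dlemma} playing a key role in converting statements about $\ehzzmM$ into scalar tests that can be handled directly.

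\textbf{Forward direction.} Assume $u\in\ehrsmM$ and fix $\tit\in\cTsrM$. Since $\mathrm{d}_3$ acts only on the transport operator slot and leaves the smooth $\tit$ untouched, we have $\mathrm{d}_3^j(u\cdot\tit)(\omega,p,A)(B_1,\dots,B_j)=\big(\mathrm{d}_3^j u(\omega,p,A)(B_1,\dots,B_j)\big)\cdot\tit(p)$, a contraction of a smooth tensor field with a smooth dual field. Applying the Leibniz rule for Lie derivatives on a contraction (which holds because the factors are honest smooth tensor fields on $M$) produces at most $2^l$ terms, each of the shape $(\Lie_{X_{i_1}}\!\cdots\Lie_{X_{i_a}}\mathrm{d}_3^j u(\Phi_{\eps,p},\cdot,A)(B_1,\dots,B_j))(p)\cdot(\Lie_{X_{i_{a+1}}}\!\cdots\Lie_{X_l}\tit)(p)$. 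The first factor is $O(\eps^{-N})$ on $K$ by the moderateness of $u$, and the second is continuous in $p$ and therefore uniformly bounded on $K$, giving the required $O(\eps^{-N})$ estimate. By Lemma~\ref{00dlemma} this is exactly $u\cdot\tit\in\ehzzmM$. For negligibility, the structure is identical: the moderate factor $(\Lie\cdots\tit)(p)$ is absorbed into the constant, while the negligible factor yields the $O(\eps^m)$ bound (using that the $k$ appearing in the negligibility test for $u$ works uniformly over finite sums).

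\textbf{Converse direction.} Suppose $u\cdot\tit\in\ehzzmM$ for every $\tit\in\cTsrM$. Fix the test data $K\comp M$, $A\in\bhat$ with $K\comp\core(A)$, $j$, $B_1,\dots,B_j\in\hat{\mathcal B}_{\core(A)}(M)$ and $l$. Cover $K$ by finitely many relatively compact open sets $U_1,\dots,U_M$ such that each $\overline{U_k}$ lies in a chart domain over which $\TrsM$ admits a smooth local frame $(e_{(k,I)})$ with dual frame $(e^{(k,I)})$, $I$ ranging over $n^{r+s}$ multiindices. Choose cutoffs $\chi_k\in\mathcal{D}(M)$ supported in the chart domain with $\chi_k\equiv 1$ on an open neighborhood of $\overline{U_k}$, and set $\tit_{k,I}:=\chi_k\,e^{(k,I)}\in\cTsrM$. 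For any $p\in U_k$ there is an open neighborhood $W\subseteq U_k$ of $p$ on which $\chi_k\equiv 1$; on $W$ the identity $(u\cdot\tit_{k,I})(\omega,q,A)=u^{(k,I)}(\omega,q,A)$ holds, where $u^{(k,I)}$ denotes the $I$-th frame component of $u$. Consequently all Lie derivatives $\Lie_{X_1}\cdots\Lie_{X_l}$ applied at $p$ to $\mathrm{d}_3^j(u\cdot\tit_{k,I})(\Phi_{\eps,p},\cdot,A)(B_1,\dots,B_j)$ agree with those of $\mathrm{d}_3^j u^{(k,I)}(\Phi_{\eps,p},\cdot,A)(B_1,\dots,B_j)$, and the moderateness of each $u\cdot\tit_{k,I}$ supplies an exponent $N_{k,I}$. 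Taking $N:=\max_{k,I}N_{k,I}$ (finitely many indices) yields a uniform bound on all frame components of $\mathrm{d}_3^j u(\Phi_{\eps,p},p,A)(B_1,\dots,B_j)$ and their iterated Lie derivatives on $K$, and hence (frames differing by bounded transition matrices on $\overline{U_k}$) on the fiberwise norm $\|\cdot\|_h$. This is precisely moderateness of $u$. The negligibility case proceeds verbatim: given $m$, obtain $k_{k,I}$ from negligibility of $u\cdot\tit_{k,I}$, set $k:=\max k_{k,I}$, and use that $\tilde{\mathcal A}_{k}(M)\subseteq\tilde{\mathcal A}_{k_{k,I}}(M)$.

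The only delicate point is ensuring that the cutoff $\chi_k$ does not pollute the Lie derivatives with extraneous terms in the converse direction; this is why $\chi_k$ is required to be identically $1$ on an \emph{open} neighborhood of each point of interest, so that $\Lie_{X_{i_1}}\!\cdots\Lie_{X_{i_a}}(\chi_k)$ vanishes at $p$ for $a\geq 1$, making the Leibniz expansion collapse to the frame-component expression. Once this is in place, the argument is essentially bookkeeping combined with the scalar characterization provided by Lemma~\ref{00dlemma}.
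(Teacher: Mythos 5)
Your forward direction matches the paper's argument: use the Leibniz rule to expand $\Lie_{X_1}\cdots\Lie_{X_l}\mathrm{d}^j_3(u\cdot\tit)$ into products of a moderate factor and a factor uniformly bounded in $\eps$. (The appeal to Lemma~\ref{00dlemma} at the end is superfluous---the estimate you obtain \emph{is} the scalar moderateness definition---but that is cosmetic.)

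The converse direction, however, contains a genuine gap where you write that the moderateness of each $u\cdot\tit_{k,I}$ ``yields a uniform bound on all frame components of $\mathrm{d}^j_3 u(\Phi_{\eps,p},p,A)(B_1,\dots,B_j)$ and their iterated Lie derivatives on $K$, and hence \dots moderateness of $u$.'' What your hypotheses control are the quantities
\[
\Lie_{X_1}\cdots\Lie_{X_l}\bigl((\mathrm{d}^j_3 u)\cdot e^{(k,I)}\bigr)(p),
\]
that is, iterated Lie derivatives of scalar functions. What moderateness of $u$ requires (Definition~\ref{moddef}) are bounds on
\[
\bigl(\Lie_{X_1}\cdots\Lie_{X_l}(\mathrm{d}^j_3 u)\bigr)\cdot e^{(k,I)}(p),
\]
the frame components of the \emph{Lie derivative of the tensor field}. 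These two differ by the full cascade of Leibniz cross-terms $(\Lie_{X_{i_1}}\cdots\Lie_{X_{i_a}}\mathrm{d}^j_3 u)\cdot(\Lie_{X_{i_{a+1}}}\cdots\Lie_{X_{i_l}} e^{(k,I)})$, $a<l$, because the coframe elements have nonzero Lie derivatives along general smooth vector fields. Your ``hence'' silently identifies the two expressions, which is exactly the point that needs an argument. The paper closes this by first proving, by induction over $l$ using the Leibniz rule (and invoking the hypothesis~(ii) at each stage with $\tit$ replaced by $\Lie\cdots\Lie\tit$), that $\bigl(\Lie_{X_1}\cdots\Lie_{X_l}\mathrm{d}^j_3 u\bigr)\cdot\tit$ is itself $O(\eps^{-N})$ for every fixed $\tit$ (equation~(\ref{liedertit})), and only then applying the cutoff-and-local-frame trick. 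Your localization setup is fine---and even somewhat more explicit than the paper's---but you need to insert that inductive step before you can pass to fiberwise norms. Note also that you use the Leibniz rule explicitly in the forward direction, so the omission in the converse is an oversight rather than a misunderstanding.
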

\begin{proof} It will suffice to prove the equivalence in the moderateness
case.

(i)$\Rightarrow$(ii): Given $u\in \ehrsmM$ and $\tit \in {\mathcal
T}^s_r(M)$,
let $K\comp \core(A)$, $A \in \bhat$, $l,\, j \in \N_0$, and
$B_1,\dots,B_j \in \hat {\mathcal B}_{\core(A)}(M)$.
Let $X_1,\dots,X_l \in \mathfrak{X}(M)$, $\Phi\in \atil$.
Then
$$
\Lie_{X_1}\dots \Lie_{X_l} \mathrm{d}_3^j [(u\cdot\tit)
(\Phi_{\eps,p},p,A)](B_1,\dots,B_j)
$$
is a sum of terms of the form
$$
\Lie_{X_{i_1}}\dots \Lie_{X_{i_k}} \mathrm{d}_3^j 
u(\Phi_{\eps,p},p,A)(B_1,\dots,B_j)
\cdot
\Lie_{X_{i_{k+1}}}\dots \Lie_{X_{i_l}} \tit (p).
$$
Here (on $K$) the first factor is bounded by some $\eps^{-N}$, and the 
second
is bounded independently
of $\eps$.

(ii)$\Rightarrow$(i):  By induction over $l$, we
deduce the following from (ii): For given $\tit\in\cTsrM$, $A\in\bhat$ such that
$K\comp\core (A)$ and $B_1,\dots,B_j\in\hat\cb_{\core(A)}(M)$, as well as
$l\in \N_0$ there exists some $N$ (depending on $\tit$) such that for 
all $X_1,\dots,X_l \in \XM$ 
\begin{equation}\label{liedertit}
\sup_{p\in K}\left|\big(\Lie_{X_1}\dots \Lie_{X_l} \mathrm{d}_3^j
u(\Phi_{\eps,p},p,A)(B_1,\dots,B_j)\big)\cdot\tit(p)\right| = O(\eps^{-N}).
\end{equation}
Indeed, for $l=0$ the assertion is immediate from (ii) and induction
proceeds by the Leibniz rule. We now 
note that in order to establish
moderateness of $u$, we may additionally suppose in the above that $K$ 
is contained
in some chart neighborhood $U$. Choose some $\chi \in \cd(U)$ which 
equals $1$ in a
neighborhood of $K$ and let $\{\tit_i \mid 1\le i \le n^{r+s}\}$ be a 
local basis of ${\mathcal T}^s_r(U)$. Then
inserting each $\chi \tit_i$ into (\ref{liedertit}) and choosing the 
maximum of the resulting
powers $\eps^{-N_i}$, we obtain the desired moderateness estimate
for $u$ on $K$.
\end{proof}
The above saturation principle allows one to extend the validity of Corollary
\ref{12300} to
general tensor fields:
\begin{theorem} \label{123}
Let $u\in \ehrsmM$. Then $u\in \nhrsM$ if and only if \\
$\forall K \comp M$ $\forall A \in \bhat$ with $K\comp
\core(A)$\\
$\forall j\in \N$ $\forall B_1,\dots, B_j \in \hat {\mathcal B}_{\core(A)}(M)$\\
$\forall m\in \N_0$ $\exists k\in \N_0$ $\forall \Phi \in
\tilde{\mathcal{A}}_k(M)$:
$$
\sup_{p\in K} \|\mathrm{d}_3^j u(\Phi_{\eps,p},p,A)(B_1,\dots,B_j)\|_h
= O(\eps^{m}) \qquad(\eps \to 0).
$$
\end{theorem}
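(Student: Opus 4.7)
The plan is to reduce to the scalar situation and then invoke the already-proven Corollary \ref{12300}. The forward implication is essentially free: specializing Definition \ref{negdef} to $l=0$ immediately yields the stated asymptotic estimate. So the substance lies in the converse.

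For the converse, I would use the Saturation Principle (Proposition \ref{satprop}) to turn the tensor question into a family of scalar questions. By that principle, it suffices to prove that $u\cdot\tit\in\nhzzM$ for every $\tit\in\cTsrM$. For the scalar test, Corollary \ref{12300} tells us that we need only verify the ``slots-1-and-2-derivative-free'' estimate, namely
\[
\sup_{p\in K}\,\bigl|\mathrm{d}_3^j(u\cdot\tit)(\Phi_{\eps,p},p,A)(B_1,\dots,B_j)\bigr|=O(\eps^m),
\]
under the same quantifier pattern over $K$, $A$, $j$, the $B_i$, $m$, $k$ and $\Phi$.

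The key observation is that the pairing with $\tit$ commutes with $\mathrm{d}_3$, because $\tit$ does not depend on the $A$-slot. Concretely, $(u\cdot\tit)(\om,p,A)=u(\om,p,A)\cdot\tit(p)$, so
\[
\mathrm{d}_3^j(u\cdot\tit)(\om,p,A)(B_1,\dots,B_j)=\bigl(\mathrm{d}_3^j u(\om,p,A)(B_1,\dots,B_j)\bigr)\cdot\tit(p).
\]
Since the full contraction between $\TrspM$ and $\TsrpM$ is bounded by the product of the fiber norms, one obtains, on any fixed compact $K$,
\[
\bigl|\mathrm{d}_3^j(u\cdot\tit)(\Phi_{\eps,p},p,A)(B_1,\dots,B_j)\bigr|\le C_{K,\tit}\,\bigl\|\mathrm{d}_3^j u(\Phi_{\eps,p},p,A)(B_1,\dots,B_j)\bigr\|_h,
\]
with $C_{K,\tit}:=\sup_{p\in K}\|\tit(p)\|_h<\infty$. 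Thus the hypothesized estimate on $u$ transfers verbatim to $u\cdot\tit$, and Corollary \ref{12300} yields $u\cdot\tit\in\nhzzM$.

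Running this argument for every $\tit\in\cTsrM$ and appealing to the negligibility half of Proposition \ref{satprop} gives $u\in\nhrsM$, completing the converse. I do not anticipate a genuine obstacle: once the Saturation Principle and the scalar reduction (Corollary \ref{12300}) are in hand, the only point requiring care is the interchange of $\mathrm{d}_3^j$ with the pairing by $\tit$, which is immediate because $\tit$ is independent of the transport-operator slot.
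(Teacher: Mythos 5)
Your proposal is correct and follows precisely the route the paper takes: saturation (Proposition \ref{satprop}) reduces to the scalar case, Corollary \ref{12300} settles the scalar case, and saturation is invoked once more to conclude. The paper's proof is terser, leaving implicit the interchange of $\mathrm{d}_3^j$ with contraction against $\tit$ and the resulting norm estimate, which you spell out; filling in that step is the right thing to do, and your justification (the pairing with $\tit(p)$ is a linear map independent of the $A$-slot, so the chain rule gives the interchange) is exactly what is needed.
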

\begin{proof}
Suppose that $u\in \ehrsmM$ satisfies the above condition.
By Proposition \ref{satprop}, for all $\tit\in {\mathcal T}^s_r(M)$,
$u\cdot\tit$ is a member of $\ehzzm(M)$
and satisfies the negligibility estimates
of order zero specified in Corollary \ref{12300}. It is therefore in $\nhzz(M)$
and the claim follows again from Proposition \ref{satprop}.
\end{proof}
The following result gives a local characterization of moderateness and
negligibility.
\begin{proposition} \label{wwloc}
Let $u\in \ehrsM$ and let $\mathcal U$ be an open cover of $M$. The following
are
equivalent:
\begin{itemize}
\item[(i)] $u\in \ehrsmM$.
\item[(ii)] $\forall U\in \mathcal{U}$ $\forall K\comp U$ $\forall A \in
\bhat$
with $K\subseteq \core(A)$ $\forall j,\, l$ $\forall B_1,\dots,B_j \in \hat
{\mathcal B}_{\core(A)}(M)$
$\exists N$ $\forall X_1,\dots,X_l\in \mathfrak{X}(U)$
$\forall \Phi\in$
$\tilde{\mathcal A}_0(U)$:
$$
\sup_{p\in K} \|\Lie_{X_1}\dots \Lie_{X_l} \mathrm{d}_3^j
u(\Phi_{\eps,p},p,A)(B_1,\dots,B_j)\|_h
= O(\eps^{-N}) \qquad(\eps \to 0)\,.
$$
\end{itemize}
An analogous result holds for the estimates defining $\nhrsM$.
\end{proposition}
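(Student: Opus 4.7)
The plan is to establish both implications via a localization argument, using cutoff functions to convert between smoothing kernels and vector fields on $M$ and on $U$. Since the transport operator $A$, the tangent vectors $B_1,\dots,B_j$, and the order $j$ of $\diff_3$-differentiation appear identically in (i) and (ii), only the iterated Lie derivatives and the smoothing kernel (acting on the point and $n$-form slots) need to be reconciled.

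For (i)$\Rightarrow$(ii), fix data $U\in\mathcal U$, $K\comp U$, $A$, $j$, $B_1,\dots,B_j$, $l$, $X_1,\dots,X_l\in\mathfrak X(U)$ and $\Phi\in\tilde{\mathcal A}_0(U)$. Choose an open $V$ with $K\subseteq V$ and $V\comp U$, a cutoff $\chi\in\mathcal D(U)$ with $\chi\equiv 1$ on $V$, and a reference kernel $\Phi_0\in\atil$. Define $\tilde X_i:=\chi X_i$ (extended by zero to $M$) and
\[
\tilde\Phi(\eps,p):=\chi(p)\,\Phi(\eps,p)+(1-\chi(p))\,\Phi_0(\eps,p),
\]
where $\chi(p)\Phi(\eps,p)$ is interpreted via extension by zero to $M$. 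A direct check shows $\tilde\Phi\in\atil$: the unit integral property is preserved as a convex combination, the support is in $B_{\eps C}(p)$ for small $\eps$, and the asymptotic derivative estimates of Definition~\ref{kernels}(1)(ii) are inherited from $\Phi$ and $\Phi_0$ since $\chi$ is bounded with bounded derivatives independently of $\eps$. For $p\in V$ and $\eps$ small enough that $\supp\Phi(\eps,p)\subseteq U$, one has $\tilde\Phi(\eps,p)=\Phi(\eps,p)$ and $\tilde X_i=X_i$ on $V$, so the iterated Lie derivatives of $p\mapsto\diff_3^j u(\Phi_{\eps,p},p,A)(B_1,\dots,B_j)$ coincide on $K$ for the two choices; the estimate in (ii) then follows by invoking (i) for $\tilde X_i$ and $\tilde\Phi$.

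For (ii)$\Rightarrow$(i), fix $K\comp M$ and cover it by finitely many $U_\alpha\in\mathcal U$, $\alpha=1,\dots,r$, with corresponding compacts $K_\alpha\comp U_\alpha$ such that $K=\bigcup_\alpha K_\alpha$. Given $X_1,\dots,X_l\in\mathfrak X(M)$ and $\Phi\in\atil$, restrict the $X_i$ to $U_\alpha$ and perform the dual construction: with $\chi_\alpha\in\mathcal D(U_\alpha)$ equal to $1$ on an open neighborhood of $K_\alpha$ and a reference $\Phi_0^\alpha\in\tilde{\mathcal A}_0(U_\alpha)$, set
\[
\Phi_\alpha(\eps,p):=\chi_\alpha(p)\,\Phi(\eps,p)+(1-\chi_\alpha(p))\,\Phi_0^\alpha(\eps,p).
\]
For $p\in\supp\chi_\alpha$ and $\eps$ small enough that $\supp\Phi(\eps,p)\subseteq U_\alpha$, the first summand is a well-defined element of $\Omega^n_\mathrm c(U_\alpha)$; the same verifications as above yield $\Phi_\alpha\in\tilde{\mathcal A}_0(U_\alpha)$ with $\Phi_\alpha=\Phi$ near $K_\alpha$. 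Applying (ii) on each $K_\alpha$ and taking the maximum of the finitely many resulting exponents $N_\alpha$ produces the moderateness estimate on $K$.

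The negligibility analogue runs in parallel: the only additional point is that the convex-combination construction preserves membership in $\tilde{\mathcal A}_k$, since for any $f\in\CinfM$ the defect $f(p)-\int_M f(q)\,\tilde\Phi(\eps,p)(q)$ splits as $\chi(p)$ times the corresponding defect for $\Phi$ plus $(1-\chi(p))$ times that for $\Phi_0$, and both terms are $O(\eps^{k+1})$. The main obstacle of the proof is thus technical rather than conceptual: verifying that all of the axioms of Definition~\ref{kernels} (support, normalization, derivative bounds, and order-$k$ cancellation) survive the cutoff construction uniformly in the parameters, and that equality of the modified and original data on an open neighborhood of $K$ propagates through arbitrary iterated Lie derivatives.
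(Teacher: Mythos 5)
The direction (i)$\Rightarrow$(ii) is correct and matches the paper's argument. However, (ii)$\Rightarrow$(i) contains a genuine gap that the paper handles explicitly and which your construction does not fix.

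You define
\[
\Phi_\alpha(\eps,p):=\chi_\alpha(p)\,\Phi(\eps,p)+(1-\chi_\alpha(p))\,\Phi_0^\alpha(\eps,p)
\]
and assert, for small $\eps$, that this lies in $\tilde{\mathcal A}_0(U_\alpha)$. But membership in $\tilde{\mathcal A}_0(U_\alpha)$ requires a smooth map defined on \emph{all} of $I\times U_\alpha$ with values in $\hat{\mathcal A}_0(U_\alpha)$, i.e., compactly supported $n$-forms on $U_\alpha$ with unit integral. For $p$ with $\chi_\alpha(p)\neq0$ and $\eps$ not small, $\supp\Phi(\eps,p)$ need not be contained in $U_\alpha$ (the support-shrinkage estimate of Definition~\ref{kernels}\,(1)(i) yields $\supp\Phi(\eps,p)\subseteq B_{\eps C}(p)\subseteq U_\alpha$ only for $\eps\le\eps_0$), so $\chi_\alpha(p)\Phi(\eps,p)$ fails to define an element of $\Omega^n_\mathrm{c}(U_\alpha)$, let alone one with the right integral. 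The ``same verifications as above'' do not apply: in (i)$\Rightarrow$(ii), $\Phi$ takes values in $\hat{\mathcal A}_0(U)$ for \emph{all} $\eps$, so extension by zero to $M$ is always legal; here you are going the other way, and the support control is only asymptotic.

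The paper closes this gap by inserting an $\eps$-dependent cutoff: after finding $\eps_0$ with $\supp\tilde\Phi(\eps,p)\subseteq B_{\eps C}(p)\subseteq U$ for $p\in\supp\chi$ and $\eps\le\eps_0$, one chooses $\lambda\in\cc^\infty(\R,I)$ with $\lambda\equiv1$ on $(-\infty,\eps_0/3]$ and $\lambda\equiv0$ on $[\eps_0/2,\infty)$, and sets
\[
\Phi(\eps,p):=(1-\chi(p)\lambda(\eps))\,\Phi_1(\eps,p)+\chi(p)\lambda(\eps)\,\tilde\Phi(\eps,p).
\]
The coefficient $\chi(p)\lambda(\eps)$ is supported in the region $\{(\eps,p):\eps<\eps_0/2,\,p\in\supp\chi\}$, precisely where $\tilde\Phi(\eps,p)$ restricts to a compactly supported $n$-form on $U$ with unit integral; elsewhere $\Phi(\eps,p)$ reverts to $\Phi_1(\eps,p)\in\hat{\mathcal A}_0(U)$. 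Joint smoothness, support, normalization and the derivative estimates then hold on all of $I\times U$, and $\Phi=\tilde\Phi$ on a neighborhood of $K$ for $\eps\le\eps_0/3$, which is all that is needed. Without the factor $\lambda(\eps)$, your $\Phi_\alpha$ simply isn't an element of the space to which you want to apply hypothesis (ii). (Your finite-subcover reduction to the $K_\alpha\comp U_\alpha$ is harmless but unnecessary: as in the paper one may assume $K\comp U$ for a single $U$ from the start.)
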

\begin{proof} Again it will suffice to carry out the proof in
the moderateness
case.

(i)$\Rightarrow$(ii): Using suitable cut-off functions we may extend the $X_i$ to
global vector fields
$\tilde X_i$ on $M$ which coincide with $X_i$ on a neighborhood of $K$ ($1\le i
\le l$).
Moreover, given $\Phi\in \tilde{\mathcal A}_0(U)$, we pick any $\Psi\in
\tilde{\mathcal A}_0(M)$ and
$\chi \in \mathcal{D}(U)$ with $\chi=1$ in a neighborhood of $K$, and set
$$
\tilde \Phi(\eps,p) := \chi(p)\Phi(\eps,p) + (1-\chi(p))\Psi(\eps,p).
$$
Then $\tilde \Phi \in \tilde{\mathcal A}_0(M)$ and $\tilde \Phi(\eps,p) =
\Phi(\eps,p)$ for all $p$
in a neighborhood of $K$ and all $\eps\in (0,1]$. The moderateness estimate of
$u$ with respect to
$K$, $\tilde X_1,\dots,\tilde X_l$ and $\tilde\Phi$ then establishes (ii).

(ii)$\Rightarrow$(i): Let $K\comp M$, $A\in \hat{\mathcal{B}}(M)$, $K\subseteq
\core(A)$, $j$, $l$, and
$B_1,\dots,B_j \in \hat{\mathcal B}_{\core(A)}(M)$ be given. Without loss of
generality we may suppose that $K\comp U$ for some $U\in\mathcal{U}$.
For this set of data we obtain $N$ from (ii). Taking $\tilde X_1,\dots,\tilde
X_l \in \mathfrak{X}(M)$,
we set $X_i:=\tilde X_i|_U$ for $1\le i\le l$. Let $\tilde \Phi \in
\tilde{\mathcal A}_0(M)$. Our aim
is to construct $\Phi \in \tilde{\mathcal A}_0(U)$ such that
$\Phi(\eps,p) =
\tilde \Phi(\eps,p)$ for $p$ in a neighborhood of $K$ and $\eps$ sufficiently
small. Thus let $W$ be
a relatively compact neighborhood of $K$ in $U$ and choose $\chi\in
\mathcal{D}(M)$ with $\supp\chi\subseteq U$ and $\chi=1$ on $W$.
Since $\tilde \Phi$ is a smoothing kernel (Definition
\ref{kernels}) there exist
$C,\, \eps_0 >0$ such that
for all $p\in \supp \chi$ and all $\eps\le \eps_0$ we have $\supp \tilde
\Phi(\eps,p)\subseteq B_{\eps C}(p)
\subseteq U$. Choose $\lambda \in \cc^\infty(\R,I)$ such
that $\lambda = 1$ on $(-\infty,\eps_0/3]$
and $\lambda = 0$ on $[\eps_0/2,\infty)$. Finally, pick any $\Phi_1 \in \tilde
{\mathcal A}_0(U)$ and set
\begin{eqnarray*}
\Phi: I\times U &\to& \hat {\mathcal A}_0(M) \\
\Phi(\eps,p) &:=& (1-\chi(p)\lambda(\eps))\Phi_1(\eps,p) +
\chi(p)\lambda(\eps)\tilde \Phi(\eps,p).
\end{eqnarray*}
It is then easily checked that in fact $\Phi \in \tilde {\mathcal A}_0(U)$ and
that for $p\in W$
and $\eps \le \eps_0/3$ we have $\Phi(\eps,p) = \tilde \Phi(\eps,p)$. Thus the
moderateness test
(ii) with data $K,\, A,\, j,\, l,\, B_1,\dots,B_j,\, X_1,\dots,X_l$ and $\Phi$
gives the desired
$\ehrsmM$-estimate for the same data set, yet with
$\tilde X_1,\dots,\tilde X_l$, $\tilde\Phi$
replacing $X_1,\dots,X_l$, $\Phi$.
\end{proof}

\begin{remark}
Note that in the previous result, the transport operators employed in the local
tests on the open sets $U$ are supposed to be {\em global} operators, defined on
all of $M$. Nevertheless, if $\mathcal{U}$ is directed by inclusion as in Remark
\ref{sheafremark}\,(iii), then 
$$
u \in \ehrsmM \Leftrightarrow u|_U \in (\ehrs)_m(U) \ \forall U\in \mathcal {U},
$$
and analogously for $\nhrs$.
\end{remark}
We are now in a position to establish the main properties of the embeddings
$\iota^r_s$ and
$\sigma^r_s$ (cf.\ Def.\ \ref{sirs}). The following result corresponds to {\bf
(T1)} in the
general scheme of construction introduced in \cite[Ch.\ 3]{found}.
\begin{theorem} \label{T1} \
\begin{itemize}
\item[(i)] $\iors(\DprsM) \subseteq \ehrsm(M)$.
\item[(ii)] $\sirs(\mathcal{T}^r_s(M)) \subseteq \ehrsm(M)$.
\item[(iii)] $(\iors-\sirs)(\mathcal{T}^r_s(M)) \subseteq \nhrs(M)$.
\item[(iv)] If $v\in \DprsM$ and $\iors(v)\in \nhrs(M)$, then $v=0$.
\end{itemize}
\end{theorem}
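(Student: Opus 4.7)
The plan is to prove all four parts by reducing to the scalar theory via Proposition \ref{satprop} (saturation) and Lemma \ref{00dlemma} (reduction), exploiting one recurring algebraic fact: for $p\in\core(A)$ one has $A(p,p)=\id$, hence $\Asr(p,p)=\id$ on $\TsrpM$, and moreover $D_A^j\Asr(p,p)(B_1,\dots,B_j)=0$ for every $j\geq 1$ and every $B_i\in\hat{\mathcal B}_{\core(A)}(M)$. This identity holds because $\Asr$ depends polynomially on $A(p,q)$ and $A(q,p)$, so every summand of its $j$-th differential evaluated at $q=p$ carries at least one factor $B_i(p,p)=0$.

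For (ii), $\sirs(t)(\om,p,A)=t(p)$ is constant in $(\om,A)$, so $\diff_3^j\sirs(t)=0$ for $j\geq 1$, while for $j=0$ the estimate collapses to $\sup_{p\in K}\|\Lie_{X_1}\cdots\Lie_{X_l}t(p)\|_h<\infty$. For (i), I first invoke Proposition \ref{satprop} to reduce to showing $\iors(v)\cdot\tit\in\ehzzm(M)$ for every $\tit\in\cTsrM$, then Lemma \ref{00dlemma} to reduce further to scalar two-slot moderateness of $(\om,p)\mapsto\langle v,D^j\Asr(p,\cdot)(B_1,\dots,B_j)\tit(p)\otimes\om\rangle$ on $\core(A)$. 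Continuity of $v$ on $\cTsrM\otimes_\CinfM\OmncM$ together with the growth $\|\partial^\alpha\Phi(\eps,p)\|_\infty=O(\eps^{-n-|\alpha|})$ from Definition \ref{kernels} yields the required $O(\eps^{-N})$ bound, and Lie derivatives in $p$ only raise the exponent.

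For (iii), saturation reduces the problem to $((\iors-\sirs)(t))\cdot\tit\in\nhzz(M)$, and Theorem \ref{123} allows dropping the Lie derivatives from the negligibility test. A direct computation gives $((\iors(t)-\sirs(t))\cdot\tit)(\om,p,A)=\int_M g(p,q)\om(q)$ with $g(p,q):=t(q)\cdot\Asr(p,q)\tit(p)-(t\cdot\tit)(p)$; the core identity $\Asr(p,p)=\id$ yields $g(p,p)=0$ for $p\in\core(A)$, so Lemma \ref{lemmagtozero} delivers $O(\eps^{m+1})$ for $\Phi\in\tilde{\mathcal A}_m(M)$. For $j\geq 1$ the $\sirs$-contribution drops out, and the residual expression $\int_M t(q)\cdot D^j\Asr(p,q)(B_1,\dots,B_j)\tit(p)\om(q)$ also has integrand vanishing on the diagonal of $\core(A)$ by the key observation, so Lemma \ref{lemmagtozero} applies again.

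For (iv), assume $\iors(v)\in\nhrs(M)$, so that $\iors(v)\cdot\tit\in\nhzz(M)$ for every $\tit\in\cTsrM$ by saturation. Introduce the scalar distribution $v\cdot\tit\in\DpM$ and its three-slot extension $\iota(v\cdot\tit)(\om,p,A):=\langle v,\tit\otimes\om\rangle$ (independent of $A$). The difference $(\iors(v)\cdot\tit-\iota(v\cdot\tit))(\om,p,A)=\langle v,(\Asr(p,\cdot)\tit(p)-\tit(\cdot))\otimes\om\rangle$ has a tensor-field argument vanishing on the diagonal of $\core(A)$; I would show this difference lies in $\nhzz(M)$ via a tensor-distributional extension of Lemma \ref{lemmagtozero}, expanding $v$ in a local basis as $\sum v^i\otimes e_i$ to reduce to scalar factors and combining Taylor expansion around the diagonal with the moment conditions of $\Phi\in\tilde{\mathcal A}_k(M)$ to produce $O(\eps^m)$ decay, provided $k$ is chosen large enough relative to $m$, $n$, and the local order of $v$ (and analogously for $j\geq 1$ using vanishing of $D^j\Asr$ on the diagonal). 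Combined with the hypothesis this yields $\iota(v\cdot\tit)\in\nhzz(M)$; reducing to the two-slot scalar theory via Lemma \ref{00dlemma} and invoking scalar injectivity from \cite{vim} produces $v\cdot\tit=0$, whence $v=0$ since $\tit$ was arbitrary. The principal obstacle is precisely this distributional extension of Lemma \ref{lemmagtozero}: the $\eps^{-n-N}$ blowup from the finite local order $N$ of $v$ must be absorbed by the $\eps^{k+1}$ gain from the moment conditions of $\Phi$, requiring careful accounting of how derivatives interact with the smoothing kernel.
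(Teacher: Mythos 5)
Your treatment of (i), (ii), (iii) follows the paper's route: saturation (Proposition \ref{satprop}) and reduction (Lemma \ref{00dlemma}) to the scalar theory, the diagonal vanishing of $\diff_3^j A^s_r(p,p)(B_1,\dots,B_j)$ for $j\geq1$ and $B_i\in\hat\cb_{\core(A)}(M)$, and Lemma \ref{lemmagtozero} for (iii). These parts are sound.

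Part (iv), however, hinges on a claim that is false. You assert that
$$
\iors(v)\cdot\tit-\iota^0_0(v\cdot\tit)\in\nhzzM\qquad\text{for every }v\in\DprsM,\ \tit\in\cTsrM,
$$
to be proved by a ``tensor-distributional extension of Lemma \ref{lemmagtozero}.'' No such extension exists. Take $M=\R$, $v=\delta'_0\otimes\partial_x\in{{\mathcal D}'}^1_0(\R)$, $\tit=\diff x$, and any $A\in\bhat$ with $0\in\core(A)$ such that, writing $a(q,p)$ for the scalar representing $A(q,p)\in L(T_q\R,T_p\R)$, one has $\partial_q a(q,0)|_{q=0}\neq0$. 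Since $A^0_1(p,q)\tit(p)=A(q,p)^{\mathrm{ad}}\tit(p)=a(q,p)\,\diff x$, the difference at $p=0$ equals
$$
\bigl\langle\delta'_0,\,(a(\cdot,0)-1)\,\Phi(\eps,0)\bigr\rangle
=-\,\partial_q a(q,0)\big|_{q=0}\,\cdot\,\Phi(\eps,0)(0),
$$
using $a(0,0)=1$. In local form $\Phi(\eps,0)(0)\sim\eps^{-1}\phi(\eps,0)(0)$, and the moment conditions defining $\tilde\ca_k(M)$ control only the integrals $\int\phi(\eps,x)(z)z^\bet\,\diff^nz$, never the pointwise value $\phi(\eps,x)(0)$; for every $k$ there exist $\Phi\in\tilde\ca_k(M)$ with $\phi(\eps,0)(0)$ bounded away from zero. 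For such $\Phi$ the expression is of order $\eps^{-1}$, so the difference cannot be in $\nhzzM$. The underlying obstruction is structural: for a distribution $v$ of local order $N$, pairing with $(\Asr(p,\cdot)\tit(p)-\tit(\cdot))\otimes\Phi_{\eps,p}$ activates up to $N$ derivatives of $\Phi_{\eps,p}$, whose pointwise $\eps^{-n-j}$ growth is not offset by the merely first-order vanishing of the argument on the diagonal; this is precisely why Lemma \ref{lemmagtozero} is restricted to smooth $g$.

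The paper avoids this pitfall by proving only the \emph{weak} statement, Proposition \ref{assocprop}: $\int\iors(v)(\Phi_{\eps,p},p,A)\tit(p)\om(p)\to\langle v,\tit\otimes\om\rangle$. The integration against $\om(p)$ is essential --- in the proof one first passes $v$ through the $p$-integral and Taylor-expands in the scaled variable, so that it is the $z$-integrals $\int\phi(\eps,x)(z)z^\bet\,\diff^nz$ (governed by the moment conditions) that enter, not pointwise derivatives of $\phi$. Granting Proposition \ref{assocprop}, (iv) is immediate: if $\iors(v)\in\nhrsM$, then by saturation and Corollary \ref{12300} (with $m=1$, $j=0$) there is a $k$ with $\sup_{p\in K}|\iors(v)(\Phi_{\eps,p},p,A)\cdot\tit(p)|=O(\eps)$ for $\Phi\in\tilde\ca_k(M)$; since $\tilde\ca_k(M)\subseteq\tilde\ca_0(M)$, Proposition \ref{assocprop} applies to these same $\Phi$, and the integral therefore tends both to $0$ and to $\langle v,\tit\otimes\om\rangle$, so $v=0$. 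You should replace your argument for (iv) by this one.
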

\begin{proof}
(i) Let $v\in \DprsM$.
By saturation (Proposition \ref{satprop}) it suffices to show that for each
$\tit\in \mathcal{T}^s_r(M)$, $\iors(v)\cdot\tit\in
\ehzzm(M)$. In order to verify the moderateness estimates from Definition
\ref{moddef},
we first consider the case $j=0$ and $l=1$. For this, we have
\begin{align*}
 \Lin&_X(\iors(v)(\Phi(\eps,p),p,A)\cdot \tit(p))\\
 &=\LX
\langle v, \Asr(p,\,.\,){\,}\tit(p) \otimes \Phi(\eps,p)(\,.\,)\rangle \\
&= \langle v, \LXz(\Asr(p,\,.\,){\,}\tit(p)) \otimes
\Phi(\eps,p)(\,.\,)\rangle
+ \langle v, \Asr(p,\,.\,){\,}\tit(p) \otimes
\LXz\Phi(\eps,p)(\,.\,)\rangle.
\end{align*}
Here we are viewing $\Phi(\eps,p)(.)$ as a smooth function of $(p,.)$
on $M\times M$ (legitimized by Lemma \ref{kriegl}), enabling us to apply $\LXz$
similar to the case of $\Asr(p,\,.\,){\,}\tit(p)$ (cf.\ the proof of Proposition
\ref{lieembedding}). The transition from $\LX\circ v$ to
$v\circ\LXz$ in the preceding calculation has been argued in detail in the
proof of Proposition \ref{lieembedding}. Note that $\Lie'_X$ and $\LX$
introduced in equations (7) resp.\ (8) of \cite{vim} correspond to $\LXz$ resp.\
$\LzX$ in the present setting.

For $K\comp M$ given, let $K_1$ be some compact neighborhood of $K$.
Since $v$ is a continuous linear form on $\mathcal{T}^s_r(M)\otimes_\CinfM 
\Omega^n_\mathrm{c}(M)$, the seminorm estimate for $v$ on $K_1$ 
shows that there exist smooth vector fields
$Y^{(i)}_j$ ($i=1,\dots,k$; $j=1,\dots,m_i$)
and some $C>0$ such that for any $w\in
\mathcal{T}^s_r(M)\otimes
\Omega^n_\mathrm{c}(M)$
with $\supp(w) \subseteq K_1$ we have
$$
|\langle v,w\rangle| \le C
\max\limits_{i=1,\dots,k}\|\Lie_{Y_1^{(i)}}\dots\Lie_{Y_{m_i}^{(i)}} w\|_\infty.
$$
For obtaining the moderateness estimates, it suffices to consider a single term
$\|\Lie_{Y_1}\dots\Lie_{Y_{m}} w\|_\infty$.
If $\eps$ is sufficiently small, the arguments of $v$ in the expression above for
$\Lie_X(\iors(v)(\Phi(\eps,p),p,A)\cdot\tit(p))$ both have support
in $K_1$ with respect to $(.)$.
Furthermore, we obtain from the defining properties
of smoothing kernels (cf.\ Definition \ref{kernels}) that the supremum over $p\in K$
of the first
term
is of order $\eps^{-n-m}$. For the second, rewriting 
$\LXz = \Lie_X'$ as $\LXX-\LzX$ (corresponding to $(\Lie_X
+ \Lie_X') - \Lie_X$ in \cite{vim}), we obtain
an estimate of order $\eps^{-n-m-1}$. Higher order Lie derivatives
$\Lie_{X_1}\dots\Lie_{X_l}$ can clearly be treated in the
same way, yielding estimates of order $\eps^{-n-m-l}$,
as can derivatives with respect to $A$: As a formal calculation shows,
the latter do not influence the order of $\eps$ since only the boundedness
of $A$ and $B_1,\dots,B_j$ on $K_1$ is used. For interchanging the action of $v$ with 
directional derivatives $\diff_B$ in direction $B$ with respect to $A$, 
we note that for $p$ fixed, the map $\phi:
A\mapsto(\Asr(p,.)\,\tit(p))\otimes\om(.)$ is smooth from $\bhat$ into
$\cTrsM\otimes_\CinfM\OmncM$ with respect to the respective (LF)-topologies, as
is, by definition, the linear map $v$. Hence by \cite[3.18]{KM},
$\diff_B\lgl v,\phi(A)\rgl=\lgl v,\diff_B\phi(A)\rgl$, and
the claim follows.

(ii) Since for $t\in \mathcal{T}^r_s(M)$, $\sirs(t)(\omega,p,A) = t(p)$ it is
immediate
that the $\ehrsm$-estimates hold for $\sirs(t)$ on any compact set, with $N=0$.

(iii) Let  $t\in \mathcal{T}^r_s(M)$, $K\comp M$, $A\in
\hat{\mathcal{B}}(M)$,
$K\subseteq
\core(A)$, and $\Phi \in \tilde {\mathcal A}_0(M)$. Then for any $\tit \in
\mathcal{T}^s_r(M)$
and any $p\in K$
\begin{equation*}
[(\sirs - \iors)(t)(\Phi(\eps,p),A)\cdot\tit](p) =
(t\cdot\tit)(p) - \int_M t(q) \Asr(p,q){\,}\tit(p) \Phi(\eps,p)(q) \,dq.
\end{equation*}
By Proposition \ref{satprop} and Corollary \ref{12300} it suffices to show the
negligibility estimates for this difference
and its derivatives with respect to $A$. To this end we
introduce the notation $f_p(q) := t(q)\Asr(p,q)\tit(p)$. Then
$f\in \cc^\infty(M\times M)$ and the above expression reads
\begin{equation} \label{family}
f_p(p) - \int_M f_p(q) \Phi(\eps,p)(q)\,dq
= \int_M (f_p(p) - f_p(q)) \Phi(\eps,p)(q)\,dq.
\end{equation}
Lemma \ref{lemmagtozero} now yields that (\ref{family}) vanishes
 of order $\eps^{m+1}$, uniformly for $p\in K$, for
$\Phi\in\tilde\ca_m(M)$. Next, we consider derivatives of 
$(\sirs - \iors)(t)$ with respect to $A$. 
Since $\sirs(t)$ does not depend on $A$ we have to show that
all $A$-derivatives of $\iors(t)$ of order greater or equal one are negligible.
To fix ideas we first consider the special case $r=1$, $s=0$. Then for
$A,\, B\in \bhat$
and $\tit \in \mathcal{T}^0_1(M)$
$$
({\mathrm d}_3\iota^1_0(t))(\Phi(\eps,p),p,A)(B)\cdot \tit(p) =  \int_M t(q)
\cdot B^1_0(p,q){\,}\tit(p)
\ \Phi(\eps,p)(q)
\,dq.
$$

Now for $B\in \hat {\mathcal B}_{\core(A)}(M)$ and $p\in K\comp
\core(A)$, $B(p,p)=0$,
so again Lemma \ref{lemmagtozero} gives the desired estimate. For general
values of $r$ and $s$, since $A\mapsto A^s_r$ is the composition of a
multilinear map with the diagonal map,  we obtain a sum of terms
each of which has the form
$$
\int_M f_p(q) \Phi(\eps,p)(q)\,dq
$$
with $f$ smooth and $f_p(p)=0$ for all $p$, so the claim follows by a third
appeal to Lemma \ref{lemmagtozero}.

(iv) The (rather lengthy) direct proof would proceed along the
lines of
proof of Proposition \ref{assocprop} (the latter actually making a stronger
statement
than (iv) does). To minimize redundancy, we confine ourselves to noting that
(iv) follows from Proposition \ref{assocprop}, via Proposition \ref{satprop} and
Corollary \ref{12300} (with $m=1$).
\end{proof}
Our next aim is to establish stability of $\ehrsmM$ and $\nhrsM$ under the Lie
derivatives $\lh_X$ from Definition
\ref{lh}. Again we first consider the case $r=s=0$:
\begin{lemma} \label{t4t5lemma}
Let $X\in {\mathfrak X}(M)$ and $u\in \ehzzmM$ resp.\ $u\in \nhzzM$. Then also
$\lh_X u\in \ehzzmM$ resp.\ $\lh_X u\in \nhzzM$.
\end{lemma}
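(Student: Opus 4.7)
The plan is to decompose $\LhX$ into the three summands
\[
\LhX = \LhzXz + \LhXzz + \LhzzX
\]
from \eqref{lhattriple} and to reduce everything to the already-established stability of the scalar two-slot theory of \cite{vim} under $\LhX$, by way of the reduction principle (Lemma \ref{00dlemma}). By that lemma, it suffices to check that for every $j\in\N_0$, every $A\in\bhat$ and all $B_1,\dots,B_j\in\hat{\mathcal B}_{\core(A)}(M)$, the map
\[
v_X(\om,p) := \diff_3^j(\LhX u)(\om,p,A)(B_1,\dots,B_j)
\]
belongs to $\ehm(\core(A))$ in the moderate case, respectively to $\nh(\core(A))$ in the negligible case.

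The contributions from $\LhzXz$ and $\LhXzz$ are handled via Lemma \ref{difflx}, which gives $\diff_3\circ\LhzXz=\LhzXz\circ\diff_3$, together with the analogous commutation $\diff_3\circ\LhXzz=\LhXzz\circ\diff_3$; the latter follows immediately from the symmetry of mixed partial derivatives in convenient calculus combined with the fact that $\LX\om$ does not depend on $A$. Iterating in $j$ yields
\[
\diff_3^j\bigl((\LhzXz+\LhXzz)u\bigr)(\om,p,A)(B_1,\dots,B_j) = (\LhX v)(\om,p),
\]
where $v(\om,p) := \diff_3^j u(\om,p,A)(B_1,\dots,B_j)$ and $\LhX$ on the right is the scalar two-slot Lie derivative of \cite{vim}. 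By hypothesis and Lemma \ref{00dlemma}, $v$ lies in $\ehm(\core(A))$ (respectively $\nh(\core(A))$), and the scalar stability theorem proved in \cite{vim} then gives $\LhX v\in\ehm(\core(A))$ (respectively $\nh(\core(A))$).

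For the third summand one has $\LhzzX u(\om,p,A) = -\diff_3 u(\om,p,A)(\LX A)$; since $A\mapsto\LX A$ is $\R$-linear, iterated application of the chain and Leibniz rules yields
\begin{align*}
\diff_3^j(\LhzzX u)(\om,p,A)(B_1,\dots,B_j)
&= -\diff_3^{j+1} u(\om,p,A)(B_1,\dots,B_j,\LX A)\\
&\quad -\sum_{i=1}^j \diff_3^j u(\om,p,A)(B_1,\dots,\LX B_i,\dots,B_j).
\end{align*}
For Lemma \ref{00dlemma} (applied to $u$) to dispatch each term, I need $\LX A$ and every $\LX B_i$ to lie in $\hat{\mathcal B}_{\core(A)}(M)$. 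For $\LX A$ this is the remark following Definition \ref{bhatumdef}. For $\LX B_i=\LXX B_i$, the key observation is that the vector field $(X,X)$ is tangent to the diagonal of $M\times M$; hence the derivative along $(X,X)$ of a section vanishing identically on $\{(p,p):p\in\core(A)\}$ again vanishes there, giving $\LX B_i\in\hat{\mathcal B}_{\core(A)}(M)$. Each term in the sum then belongs to $\ehm(\core(A))$ (respectively $\nh(\core(A))$) by Lemma \ref{00dlemma}.

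Combining the three contributions gives $v_X\in\ehm(\core(A))$ (respectively $\nh(\core(A))$), and one more invocation of Lemma \ref{00dlemma} concludes. The principal obstacle is precisely the last stability check on $\LX B_i$: it is exactly here that the choice to define $\core(A)$ as the \emph{interior} of $\{p:A(p,p)=\id\}$ in Definition \ref{coredef} pays off, since openness is needed for the vanishing of $B_i$ on the diagonal over $\core(A)$ to be preserved under differentiation along the diagonal vector field $(X,X)$.
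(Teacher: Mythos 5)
Your proof follows essentially the same route as the paper: decompose $\LhX$ via \eqref{lhattriple}, handle $\LhzXz+\LhXzz$ by commuting $\diff_3^j$ past the Lie derivative (Lemma~\ref{difflx} plus symmetry of higher differentials) and invoking the scalar stability result of \cite{vim} through the reduction Lemma~\ref{00dlemma}, and handle $\LhzzX$ by using membership in $\hat{\mathcal B}_{\core(A)}(M)$. In fact you make explicit a point the paper glosses over as ``follows directly from the definitions'': that computing $\diff_3^j(\LhzzX u)$ produces not only terms with $\LX A$ but also terms with $\LX B_i$ in the directional slots, and that $\LX B_i\in\hat{\mathcal B}_{\core(A)}(M)$ (by the openness of $\core(A)$ and the tangency of $(X,X)$ to the diagonal) is needed to close the estimate.
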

\begin{proof}
The proof will be achieved by reduction (Lemma \ref{00dlemma}) to the
setting of \cite{vim}. Recall from Definition \ref{lh} that
$$
 (\lh_Xu)(\om,p,A)=\Lie_X(u(\om,A))(p)-\mathrm{d}_1u(\om,p,A)(\Lie_X\om)-\mathrm{d}
_3u(\om,p,A)(\Lie_XA).
$$
Since $\Lie_X A \in \hat {\mathcal B}_{\core(A)}(M)$ for any $A\in \hat {\mathcal B}(M)$, 
the moderateness (resp.\ negligibility) estimates for $\LhzzX u$,
i.e., for ${\mathrm d}_3u(\om,p,A)(\Lie_XA)$
follow directly from the definitions. In order to show moderateness resp.\
negligibility of $\LhzXz\,u + \LhXzz\,u$, i.e.\ of $u_0:= (\omega,p,A) \mapsto
\Lie_X(u(\om,A))(p)-\mathrm{d}_1u(\om,p,A)(\Lie_X\om)$ we
employ
Lemma \ref{00dlemma}. Fix $j\in \N_0$, $A\in \hat {\mathcal B}(M)$
and $B_1,\dots,B_j \in \hat {\mathcal B}_{\core(A)}(M)$. Then
\begin{equation}
\begin{array}{l}
\mathrm{d}^j_3 u_0(\omega,p,A)(B_1,\dots,B_j) =\\[5pt]
 \Lie_X[\mathrm{d}^j_2 u(\omega,A)(B_1,\dots,B_j)](p)
- \mathrm{d}_1[\mathrm{d}^j_3 u(\omega,p,A)(B_1,\dots,B_j)](\Lie_X \omega).\label{d3j}
\end{array}
\end{equation}
In fact, we may interchange  $\diff_3$ with $\diff_1$ due to symmetry of higher
differentials (\cite[5.11]{KM}). Concerning $\diff_3$ and $\LX$
we can either use Lemma \ref{difflx} or note that 
due to $r=s=0$ the term $\Lie_X(u(\om,A))(p)$ can be written as
$\diff_2u(\om,p,A)(X)$ which again permits to resort to symmetry of higher
differentials. (\ref{d3j}) is precisely the Lie derivative in the sense 
of \cite[Def.\ 3.8]{vim} of the map $(\omega,p) \mapsto \mathrm{d}^j_3
u(\omega,p,A)(B_1,\dots,B_j)$,
hence is in $\ehm(\core(A))$ (resp.\ $\nh(\core(A))$) by \cite[Th.\ 4.6]{vim}.
Again by Lemma \ref{00dlemma}, the claim follows.
\end{proof}
\begin{theorem} \label{t4t5}
$\ehrsmM$ and $\nhrsM$ are stable under Lie derivatives $\LhX$ where $X\in\XM$.
\end{theorem}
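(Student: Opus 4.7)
The plan is to reduce the statement for general tensorial type $(r,s)$ to the scalar case $(0,0)$ which has already been established in Lemma \ref{t4t5lemma}. The two tools that bridge the gap are the saturation principle (Proposition \ref{satprop}) and the Leibniz rule for the Lie derivative on the basic space (Proposition \ref{leibnizbasic}).

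First I would observe that for any smooth $\tit\in\cTsrM$, the embedded field $\sirs(\tit)$ satisfies $\LhX\sirs(\tit)=\sirs(\LX\tit)$. This is immediate from the explicit formula (\ref{lhformula}) in Definition \ref{lh}, since $\sirs(\tit)(\om,p,A)=\tit(p)$ does not depend on $\om$ or $A$, so $\diff_1$ and $\diff_3$ contribute nothing and the first term reduces to the classical $(\LX\tit)(p)$. Treating $\sirs(\tit)$ as a member of $\hat\ce^s_r(M)$ and applying the Leibniz rule for contractions of dual fields from Proposition \ref{leibnizbasic}, I obtain
\[
(\LhX u)\cdot\tit \;=\; \LhX(u\cdot\tit)\;-\;u\cdot\LX\tit
\]
(with $\sirs$ notationally suppressed on the right).

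Next, for $u\in\ehrsmM$, the saturation principle (Proposition \ref{satprop}) guarantees that both $u\cdot\tit$ and $u\cdot\LX\tit$ are members of $\ehzzmM$. Applying Lemma \ref{t4t5lemma} to the first summand yields $\LhX(u\cdot\tit)\in\ehzzmM$, and hence $(\LhX u)\cdot\tit\in\ehzzmM$. Since $\tit\in\cTsrM$ was arbitrary, another application of Proposition \ref{satprop} (in the converse direction) gives $\LhX u\in\ehrsmM$.

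The argument for negligibility is structurally identical, but must be carried out on top of the moderateness statement since $\nhrsM\subseteq\ehrsmM$ by definition. For $u\in\nhrsM$, Proposition \ref{satprop} yields $u\cdot\tit,\;u\cdot\LX\tit\in\nhzzM$; Lemma \ref{t4t5lemma} then gives $\LhX(u\cdot\tit)\in\nhzzM$, so $(\LhX u)\cdot\tit\in\nhzzM$ for every $\tit$, and saturation furnishes $\LhX u\in\nhrsM$. I do not expect any real obstacle in this plan: all of the technically demanding work (compatibility of $\LhX$ with the tests on scalar representatives, and in particular the interchange of the differentials $\diff_1,\diff_3$ with $\LX$ via Lemma \ref{difflx} and the reduction principle of Lemma \ref{00dlemma}) has already been absorbed into the proof of Lemma \ref{t4t5lemma}, which itself leans on the stability result \cite[Th.\ 4.6]{vim} of the scalar theory. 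The only point that deserves a short verification in the write-up is the identity $\LhX\sirs(\tit)=\sirs(\LX\tit)$, required to invoke the Leibniz rule in the form used above.
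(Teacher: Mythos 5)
Your proposal is correct and follows essentially the same route as the paper's proof: reduce to the scalar case via the saturation principle (Proposition \ref{satprop}), use the Leibniz rule (Proposition \ref{leibnizbasic}) to isolate $(\LhX u)\cdot\tit$, and invoke Lemma \ref{t4t5lemma} for the scalar moderateness/negligibility statement. The only difference is that you spell out the auxiliary identity $\LhX\sirs(\tit)=\sirs(\LX\tit)$, which the paper uses tacitly; this is a useful clarification but not a different argument.
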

\begin{proof} It will suffice to treat the case of moderateness. Thus let
$u\in \ehrsmM$ and $X\in\XM$.
Picking any $\tit$ from $\cTsrM$, saturation (Proposition
\ref{satprop}) yields $u\cdot\tit\in\ehzzmM$. By Lemma \ref{t4t5lemma}, also
$\LhX(u\cdot\tit)\in\ehzzmM$. However,
$\LhX(u\cdot\tit)=(\LhX u)\cdot\tit+u\cdot(\LhX\tit)$ due to Proposition
\ref{leibnizbasic}. The second term being a member of
$\ehzzmM$, again by Proposition \ref{satprop}, we infer that
$(\LhX u)\cdot\tit$ is moderate. Since $\tit$ was arbitrary, a third appeal to
Proposition \ref{satprop} establishes the moderateness of $\LhX u$.
\end{proof}
Thus we finally arrive at
\begin{definition}
The space of generalized $(r,s)$-tensor fields is defined as
\begin{equation*} 
\ghrsM := \ehrsmM / \nhrsM.
\end{equation*}
\end{definition}
$\ghrsM$ is both a $\cc^\infty(M)$- and a $\ghzzM$-module.
For $u\in \ehrsmM$ we denote by $[u]$ its equivalence class in $\ghrsM$.
From Theorem \ref{T1} it follows that $\iors$ and $\sirs$ induce maps from $\Dprs(M)$
resp.\ $\cTrsM$ into $\ghrsM$. These maps will be denoted by the same letters.
We collect the main properties of $\ghrsM$ in the following result.
\begin{theorem} \label{maintheorem}
The map
$$
\iors: \Dprs(M) \to \ghrsM
$$
is a linear embedding whose restriction to $\cTrsM$ coincides with
$$
\sirs: {\mathcal T}^r_s(M) \to \ghrsM\,.
$$
For any smooth vector field $X$ on $M$, the Lie
derivative
\begin{eqnarray*}
\lh_X: \ghrsM &\to& \ghrsM \\
\lh_X([u]) &:=& [\lh_X u]
\end{eqnarray*}
is a well-defined operation commuting with the embedding, i.e., for any $v\in
\Dprs(M)$, $\iors(\Lie_X v) = \lh_X\iors(v)$.
\end{theorem}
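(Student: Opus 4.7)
The proof plan is to assemble the theorem from results already established: essentially nothing new needs to be proved, only the statements have to be transported from the basic space $\ehrsM$ down to the quotient $\ghrsM$.

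First I would note that $\iors:\DprsM\to\ehrsmM$ and $\sirs:\cTrsM\to\ehrsmM$ are both linear maps into the space of moderate elements by Theorem \ref{T1}\,(i)--(ii). Composing with the canonical projection $\ehrsmM\to\ghrsM=\ehrsmM/\nhrsM$ yields linear maps $\iors:\DprsM\to\ghrsM$ and $\sirs:\cTrsM\to\ghrsM$ (I reuse the same letters). The statement that $\sirs$ equals the restriction of $\iors$ to $\cTrsM$ is then precisely the assertion that $(\iors-\sirs)(t)\in\nhrsM$ for every $t\in\cTrsM$, which is Theorem \ref{T1}\,(iii). Injectivity of the quotiented $\iors$ is Theorem \ref{T1}\,(iv): if $\iors(v)\in\nhrsM$ then $v=0$.

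Next I would treat the Lie derivative. To see that $\LhX:\ghrsM\to\ghrsM$ is well-defined by $\LhX[u]:=[\LhX u]$, I need two things: that $\LhX$ preserves moderateness (so the formula produces an element of $\ehrsmM$) and that it preserves negligibility (so the equivalence class on the right depends only on $[u]$, not on the representative $u$). Both statements are exactly Theorem \ref{t4t5}. Since $\LhX:\ehrsM\to\ehrsM$ is linear by inspection of (\ref{lhformula}) (each of the three summands is linear in $u$), the induced map on the quotient is linear as well.

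Finally, commutation with the embedding on $\ghrsM$ is immediate: Proposition \ref{lieembedding} already gives $\LhX\iors(v)=\iors(\LX v)$ as an equality in $\ehrsmM$, and passing to equivalence classes yields $\LhX\iors(v)=\iors(\LX v)$ in $\ghrsM$. There is no real obstacle here because all of the heavy lifting (the stability of the moderate and negligible spaces under $\LhX$, the commutation at the kinematic level, and the injectivity coming from the association-based argument of Proposition \ref{assocprop}) has been carried out in the preceding sections; the present theorem is purely a matter of collecting these facts and descending them to the quotient.
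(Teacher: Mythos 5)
Your proposal is correct and follows essentially the same route as the paper's own proof: Theorem \ref{T1} yields the embedding properties and the identification of $\iors|_{\cTrsM}$ with $\sirs$, Theorem \ref{t4t5} gives well-definedness of $\LhX$ on the quotient, and Proposition \ref{lieembedding} gives the commutation. Your version merely spells out more explicitly which clause of Theorem \ref{T1} is used where, which the paper leaves implicit.
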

\begin{proof}
All the claimed properties of $\iors$ and $\sirs$ follow from Theorem \ref{T1}.
$\lh_X$ is well-defined
by Theorem  \ref{t4t5}. Finally, that Lie derivatives commute with the embedding was
already established in Proposition \ref{lieembedding}.
\end{proof}
Summing up, we obtain the following commutative diagram:\medskip\\
$$
  \xymatrix{
      \cTrsM \ar[rrr]^{\Lie_X} \ar[rd]^{\rho^r_s} \ar[dd]_\sirs &    &   &
\cTrsM \ar[ld]_{\rho^r_s} \ar[dd]^\sirs \\
       & \DprsM \ar[r]^{\Lie_X} \ar[ld]_\iors & \DprsM \ar[dr]^\iors &  \\
       \ghrsM \ar[rrr]^{\hat \Lie_X} & & &  \ghrsM
  }
$$
\vskip1.3em
As was highlighted in Section \ref{nogo}, the properties included in 
this diagram are optimal in light of Schwartz' impossibility result.

To extend these results to the universal tensor algebra over $M$, we first note
that
\begin{eqnarray*}
\ehrsmM \otimes (\hat{\mathcal{E}}^{r'}_{s'})_\mathrm{m}(M) &\subseteq&
(\hat{\mathcal{E}}^{r+r'}_{s+s'})_\mathrm{m}(M)  \\
\ehrsmM \otimes \hat{\mathcal{N}}^{r'}_{s'}(M) &\subseteq&
\hat{\mathcal{N}}^{r+r'}_{s+s'}(M).
\end{eqnarray*}
Thus we obtain the algebra $\mathcal{T}_{\ehm}(M) := \bigoplus_{r,s} \ehrsmM$
containing the ideal
$\mathcal{T}_{\nh}(M) := \bigoplus_{r,s} \nhrsM$. 
\begin{definition}
The universal algebra of generalized tensor fields is defined as
$$
\mathcal{T}_{\gh}(M) := \mathcal{T}_{\ehm}(M)/\mathcal{T}_{\nh}(M) \cong
\bigoplus_{r,s} \ehrsmM / \nhrsM = \bigoplus_{r,s} \ghrsM.
$$
\end{definition}
The operations of tensor product, contraction and Lie derivative with respect to smooth
vector fields naturally extend
to $\mathcal{T}_{\gh}(M)$ and we have, by Proposition
\ref{leibnizbasic},
$$
\LhX(u_1\otimes u_2) =
(\LhX u_1)\otimes u_2 + u_1\otimes (\LhX u_2).
$$

Furthermore, the embeddings $\iota^r_s$ and $\sigma^r_s$ extend to
$\mathcal{T}_{\Dp}(M) := \bigoplus_{r,s} \DprsM$ resp.\
$\mathcal{T}(M):=\bigoplus_{r,s}\cTrsM$. We will denote the
respective maps by $\iota$ resp.\ $\sigma$. From Theorem \ref{maintheorem} we obtain:
\begin{corollary}
The mapping
$$
\iota: \mathcal{T}_{\Dp}(M) \to  \mathcal{T}_{\gh}(M)
$$
is a linear embedding whose restriction to $\mathcal{T}(M)$ coincides with the algebra homomorphism
$$
\sigma: \mathcal{T}(M) \to \mathcal{T}_{\gh}(M),
$$
thereby rendering $\mathcal{T}(M)$ a subalgebra of $\mathcal{T}_{\gh}(M)$.
For any smooth vector field $X$ on $M$, the Lie derivatives
$\lh_X: \mathcal{T}_{\gh}(M)\to \mathcal{T}_{\gh}(M)$
resp.\ $\LX:\ct(M)\to\ct(M)$ intertwine
with the embedding $\iota$.
\end{corollary}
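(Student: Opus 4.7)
The proof is essentially a repackaging of Theorem \ref{maintheorem} as a statement about direct sums, together with one small computation establishing that $\sigma$ respects tensor products. My plan is to verify the four assertions of the corollary in turn, each of which reduces to a componentwise assertion already available or to a routine check on representatives.

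First, linearity and injectivity of $\iota=\bigoplus_{r,s}\iors$ on $\ct_{\Dp}(M)=\bigoplus_{r,s}\DprsM$ are immediate from Theorem \ref{maintheorem} applied in each bidegree, since a direct sum of linear embeddings is a linear embedding. The same remark applies to $\sigma=\bigoplus_{r,s}\sirs$. Next, to see that $\sigma$ is an algebra homomorphism, I compute on representatives. For $t_1\in\cTrsM$ and $t_2\in\mathcal{T}^{r'}_{s'}(M)$, Definition \ref{def-embeddings}(i) gives $\sirs(t_i)(\om,p,A)=t_i(p)$, and the definition of the tensor product on the basic space (just before Proposition \ref{leibnizbasic}) yields
\[
\bigl(\sigma(t_1)\otimes\sigma(t_2)\bigr)(\om,p,A)
 = \sigma(t_1)(\om,A)(p)\otimes\sigma(t_2)(\om,A)(p)
 = t_1(p)\otimes t_2(p)
 = \sigma(t_1\otimes t_2)(\om,p,A),
\]
so $\sigma(t_1\otimes t_2)=\sigma(t_1)\otimes\sigma(t_2)$ already at the level of $\ehrsmM$, and hence a fortiori in $\ct_{\gh}(M)$.

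The assertion that $\iota|_{\ct(M)}=\sigma$ is nothing but Theorem \ref{T1}(iii) assembled across all $(r,s)$: since $(\iors-\sirs)(\cTrsM)\subseteq\nhrsM$, both maps induce the same arrow $\cTrsM\to\ghrsM$, and summing yields $\iota|_{\ct(M)}=\sigma$. Combined with the algebra-homomorphism property of $\sigma$ just established, this shows that $\iota$ identifies $\ct(M)$ with a subalgebra of $\ct_{\gh}(M)$.

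Finally, the intertwining with Lie derivatives follows degreewise: for $v\in\DprsM$, Theorem \ref{maintheorem} (which rests on Proposition \ref{lieembedding} and the well-definedness of $\LhX$ on $\ghrsM$ granted by Theorem \ref{t4t5}) gives $\iors(\LX v)=\LhX\iors(v)$ in $\ghrsM$. Since $\LX$ preserves each summand $\DprsM$ and $\LhX$ preserves each $\ghrsM$, taking the direct sum produces the desired identity $\iota(\LX v)=\LhX\iota(v)$ on $\ct_{\Dp}(M)$. No step presents any real obstacle, the only piece of genuine content beyond Theorem \ref{maintheorem} being the pointwise check that $\sigma$ is multiplicative, which is immediate from the definitions.
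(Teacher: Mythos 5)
Your proof is correct and takes essentially the same approach as the paper, which derives the corollary directly from Theorem \ref{maintheorem} applied degreewise, together with the definitions of the tensor operations on $\ehrsM$. The only point the paper leaves implicit that you spell out is the one-line pointwise verification that $\sigma$ is multiplicative with respect to $\otimes$, which is exactly the easy check expected of the reader.
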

To conclude this section, we give the following characterization of 
$\ghrsM$ as a $\CinfM$-module.
\begin{theorem}\label{modulestructure}
The following chain of $\CinfM$-module isomorphisms holds:
$$
\ghrsM \cong \ghzzM \otimes_{\CinfM} \cTrsM \cong \Lin_{\CinfM}(\cTsrM,\ghzzM).
$$
\end{theorem}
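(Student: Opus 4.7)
The plan is to establish the second isomorphism directly via saturation (Proposition~\ref{satprop}) and then deduce the first from the finite generation and projectivity of the relevant tensor modules. Define
$$
\Psi: \ghrsM \to \Lin_\CinfM(\cTsrM,\ghzzM),\qquad [u] \longmapsto \bigl(\tit \mapsto [u\cdot\tit]\bigr).
$$
The forward direction of Proposition~\ref{satprop} (moderateness / negligibility is preserved under contraction with any $\tit$) guarantees that $\Psi$ is well-defined, while the converse direction applied to negligibility yields injectivity: if $\Psi([u])=0$, then $u\cdot\tit\in\nhzzM$ for every $\tit$, which forces $u\in\nhrsM$. The $\CinfM$-linearity of $\Psi([u])$ in the argument $\tit$ is immediate from the $\CinfM$-bilinearity of contraction at the level of the basic space.

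Next, recall that the contraction pairing gives the standard identification $\cTrsM\cong\Lin_\CinfM(\cTsrM,\CinfM)$, and that by the Serre--Swan theorem $\cTsrM$ is a finitely generated projective $\CinfM$-module (using paracompactness of $M$). The corresponding classical algebraic isomorphism then furnishes a $\CinfM$-module isomorphism
$$
\Theta: \ghzzM\otimes_\CinfM\cTrsM \stackrel{\cong}{\longrightarrow} \Lin_\CinfM(\cTsrM,\ghzzM),\qquad [u]\otimes t \longmapsto \bigl(\tit\mapsto (t\cdot\tit)\cdot[u]\bigr).
$$

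Now define
$$
\Phi: \ghzzM\otimes_\CinfM\cTrsM \to \ghrsM,\qquad [u]\otimes t \longmapsto [u\cdot\sirs(t)].
$$
This map is $\CinfM$-balanced because $\sirs(ft)=\sizz(f)\sirs(t)$ in the basic space. The principal technical point is well-definedness: for $u\in\nhzzM$ and any $\tit\in\cTsrM$ one computes, using $\sirs(t)\cdot\tit=\sizz(t\cdot\tit)$ in the basic space,
$$
(u\cdot\sirs(t))\cdot\tit = u\cdot\sizz(t\cdot\tit),
$$
which is a product of a negligible and a moderate scalar element, hence lies in $\nhzzM$ because $\nhzzM$ is an ideal in $\ehzzmM$; the converse direction of Proposition~\ref{satprop} then delivers $u\cdot\sirs(t)\in\nhrsM$. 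The same identity shows $\Psi\circ\Phi=\Theta$. Since $\Theta$ is an isomorphism and $\Psi$ is injective, $\Psi$ is automatically surjective, hence bijective, and then $\Phi=\Psi^{-1}\circ\Theta$ is also an isomorphism, completing the chain. The main obstacle is the well-definedness of $\Phi$, which rests on combining the ideal property of $\nhzzM$ in $\ehzzmM$ with the converse direction of saturation in order to lift pointwise-in-$\tit$ negligibility to negligibility of the tensor field itself.
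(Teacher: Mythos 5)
Your proof is correct. You take the same algebraic step as the paper (finite generation and projectivity of the tensor module, combined with the standard Bourbaki/Serre--Swan identity, give the isomorphism $\Theta$ between $\ghzzM\otimes_\CinfM\cTrsM$ and $\Lin_\CinfM(\cTsrM,\ghzzM)$), but you organize the proof of the remaining isomorphism $\ghrsM\cong\Lin_\CinfM(\cTsrM,\ghzzM)$ differently. The paper first constructs an isomorphism at the level of basic spaces, $\ehrsM\cong\Lin^b_\CinfM(\cTsrM,\ehzzM)$, via the exponential law of \cite[27.17]{KM} and Lemma~\ref{reprbasic}, then removes the boundedness requirement by a uniform-boundedness argument, and finally asserts that saturation makes the descent to quotients ``straightforward.'' You instead work entirely at quotient level: you define $\Psi$ and $\Phi$ directly, use saturation (together with the ideal property of $\nhzzM$ in $\ehzzmM$) for well-definedness and injectivity of $\Psi$, and obtain surjectivity of $\Psi$ from the identity $\Psi\circ\Phi=\Theta$ and the known surjectivity of $\Theta$. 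This bypasses the basic-space isomorphism and the boundedness discussion altogether, and it makes explicit what the paper's ``straightforward'' step leaves implicit — namely that surjectivity of the induced map on quotients also rests on the finitely generated projective structure, not on saturation alone. Two small points to add: alongside negligibility preservation for $\Phi$ you should also record the (entirely analogous) moderateness preservation — $u\cdot\sizz(t\cdot\tit)$ is a product of moderate scalars, hence moderate since $\ehzzmM$ is a subalgebra, and the converse of saturation then gives $u\cdot\sirs(t)\in\ehrsmM$ — which is needed for $[u\cdot\sirs(t)]$ to make sense at all; and for non-second-countable $M$ the paper applies the finitely generated projective module result componentwise, a technicality your argument should absorb the same way.
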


\begin{proof}
The $\CinfM$-module $\cTrsM$ is projective and finitely generated (cf.\
\cite[2.23]{GHV}, applied to each connected (hence second countable) component
of $M$).
Thus by \cite[Ch.\ II, \S 4, 2]{bourbaki-algebra}, it follows that
\begin{eqnarray*}
\Lin_{\CinfM}(\cTsrM,\ghzzM) &\cong& \ghzzM \otimes_{\CinfM}
\Lin_{\CinfM}(\cTsrM,\CinfM)\\
&=& \ghzzM \otimes_{\CinfM} \cTrsM.
\end{eqnarray*}
We establish the theorem by showing
$\ghrsM\cong\Lin_{\CinfM}(\cTsrM,\ghzzM)$.
By the exponential law in \cite[27.17]{KM}, it is immediate from Lemma
\ref{reprbasic} (4) (or (3)) that
\begin{eqnarray*}
\ehrsM &\cong& \Lin^b_{\CinfM}(\cTsrM,\Cinf(\ahat\times\bhat\,,\,\CinfM))\\
            &=& \Lin^b_{\CinfM}(\cTsrM\,,\,\ehzzM).
\end{eqnarray*}
holds. Here, the boundedness assumption in the last term can be formally
dropped, i.e., $ \Lin^b_{\CinfM}$ can safely be replaced by $ \Lin_{\CinfM}$:
Since all the spaces involved are convenient, a $\CinfM$-linear map
$F:\cTsrM\to\ehzzM$ is bounded if and only if for all $\om\in\ahat$,
$A\in\bhat$, the maps $F_{\om,A}:\tit\mapsto F(\tit)(\om,A)$ (for
$\tit\in\cTsrM$) are bounded, due to the uniform boundedness principle 
\cite[5.26]{KM}. Being a member of
$\Lin_{\CinfM}(\cTsrM\,,\,\CinfM)$, however, the map $F_{\om,A}$ is of the
form
$\tit\mapsto t\cdot\tit$ for some $t\in\cTrsM$ and thus even continuous with
respect to the Fr\'echet topologies. Using saturation (Proposition \ref{satprop}),
it is straightforward to check that
$\ehrsM \cong\Lin_{\CinfM}(\cTsrM\,,\,\ehzzM)$
induces an isomorphism from
$\ghrsM$ onto $\Lin_{\CinfM}(\cTsrM\,,\,\ghzzM)$, thereby finishing the proof.
\end{proof}

\section{Association}\label{association}

In all versions of Colombeau's construction the Schwartz impossibility result
is circumvented by introducing a very narrow concept of equality, more precisely, by
introducing a very strict equivalence relation on the space of moderate elements. In particular,
this equivalence is finer than distributional equality. Nevertheless, raising the latter to the 
level of the algebra by introducing an equivalence relation called association
one can take advantage of using both notions of equality in the so-called ``coupled calculus''. 
For example, tensor products of continuous or $\cc^k$-fields are not preserved in the 
algebra $\ct_{\gh}$ in the sense that the embedding is not a homomorphism with
respect to the tensor product. It will, however, turn out to 
to be a homomorphism at the level of association.

In many situations of practical relevance, elements of the algebra are 
associated to distributions. This
feature has the advantage that often one may use the mathematical
power of the differential algebra to perform the calculations but then invoke
the notion of association to give a physical interpretation to the result obtained.
This is especially useful when it comes to modelling source terms in
nonlinear partial differential equations and, consequently, one often wants to
consider such equations in the sense of association rather than equality
(cf., e.g., \cite{c3, MOBook}). One of the applications we have in mind is
Einstein's equations where we seek generalized metrics which have an
Einstein tensor associated to a distributional energy-momentum tensor
representing, e.g., a cosmic string or a shell of matter (cf.\
\cite{clarke,vickersESI,SV06} and the references therein).

In this section we introduce an appropriate concept of association 
for generalized tensor fields.

Note that as an exception to our standard notation, in this section
we will use capitals for generalized scalar and tensor fields. This will
permit us to distinguish notationally between elements $u_1,\, u_2$ etc.\ of 
$\ehrsmM$ and their respective classes $U_1=[u_1]$, $U_2=[u_2]$ etc.\ in
$\ghrsM$. 
We start by briefly considering the scalar case (touched upon in \cite{vim}).
\begin{definition}
We say that a generalized scalar field $F=[f] \in \ghM$ is associated with $0$
(denoted $F \approx 0$), if for some (and hence any)
representative $f \in \ehmM$ of $F$ and for each $\omega \in
\Omega^n_\mathrm{c}(M)$ there exists some $m>0$ such that $\forall \Phi \in \amtil$
\begin{equation*}
\lim_{\eps \to 0}\int_{M}f(\Phi(\eps,p),p)\omega(p)=0 \,.
\end{equation*}
We say that two generalized functions $F,G$ are associated and write
$F\approx G$ if $F-G \approx 0$.
\end{definition}
At the level of association we regain the usual results for multiplication
of distributions (\cite[Prop.\ 6.2]{vim}).

\begin{proposition}
\item{(i)} If $f \in \cc^{\infty}(M)$ and $v \in \DpM$
then
\begin{equation*}
\iota(f)\iota(v) \approx \iota(fv).  
\end{equation*}
\item{(ii)} If $f, g \in \cc(M)$ then
\begin{equation*}
\iota(f)\iota(g) \approx \iota(fg). 
\end{equation*}
\end{proposition}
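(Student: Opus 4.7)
The plan is to verify, in each case, that both sides of the claimed association are associated to the same classical distribution (namely $fv$ in (i) and $fg$ in (ii)), so that their difference is associated to zero. The key tool is the basic association property $\iota(w)\approx w$ for every $w\in\DpM$: for every $\omega_0\in\OmncM$ there exists $m$ such that for every $\Phi\in\amtil$,
\begin{equation*}
\lim_{\eps\to 0}\int_M\langle w,\Phi(\eps,p)\rangle\,\omega_0(p)=\langle w,\omega_0\rangle.
\end{equation*}
This is proved by rewriting the left-hand side, via Fubini, as the pairing $\langle w,\,q\mapsto\int_M\Phi(\eps,p)(q)\omega_0(p)\rangle$ and showing that the inner $n$-form converges to $\omega_0$ in the (LF)-topology of $\OmncM$ by a local Taylor-expansion argument in the spirit of Lemma \ref{lemmagtozero}; continuity of $w$ then delivers the limit.

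For (i), since $f\in\CinfM$ one has $\iota(f)=\sigma(f)$ in $\ghM$ (the scalar analogue of Theorem \ref{T1}\,(iii)), so the product $\iota(f)\iota(v)$ admits the representative $(\omega,p)\mapsto f(p)\langle v,\omega\rangle$. Therefore
\begin{equation*}
\int_M(\iota(f)\iota(v))(\Phi(\eps,p),p)\,\omega_0(p)=\int_M\langle v,\Phi(\eps,p)\rangle\,(f\omega_0)(p)\;\longrightarrow\;\langle v,f\omega_0\rangle=\langle fv,\omega_0\rangle,
\end{equation*}
by the basic property applied to $v$ with test form $f\omega_0$. The same property applied to $fv\in\DpM$ with test form $\omega_0$ yields $\int_M\iota(fv)(\Phi(\eps,p),p)\,\omega_0(p)\to\langle fv,\omega_0\rangle$, so the difference $\iota(f)\iota(v)-\iota(fv)$ is associated to zero.

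For (ii), the product $\iota(f)\iota(g)$ is represented by $(\omega,p)\mapsto\langle f,\omega\rangle\langle g,\omega\rangle$. Since each $\Phi(\eps,p)\in\ahat$ has unit integral and support in $B_{\eps C}(p)$, and since $\Phi$ is locally $L^1$-bounded by the smoothing-kernel estimates of Definition \ref{kernels}, the usual mollifier argument yields $\langle h,\Phi(\eps,p)\rangle\to h(p)$ uniformly for $p$ in any compact set whenever $h\in\cc(M)$. Applied to $h=f$ and $h=g$ this gives $\langle f,\Phi(\eps,p)\rangle\langle g,\Phi(\eps,p)\rangle\to f(p)g(p)$ uniformly on $\supp\omega_0$; passing to the limit under the integral against $\omega_0$ produces $\langle fg,\omega_0\rangle$. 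Combined with the basic property applied to $fg\in\cc(M)\subset\DpM$, this shows $\iota(f)\iota(g)\approx\iota(fg)$.

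The main obstacle is the basic association property $\iota(w)\approx w$ itself. For continuous $w$ (which is what part (ii) requires) it reduces to uniform convergence of the mollified function on compacta. For a general distribution $w$ (needed in (i)) one has to establish convergence of the auxiliary $n$-form $q\mapsto\int_M\Phi(\eps,p)(q)\omega_0(p)$ to $\omega_0$ in the (LF)-topology of $\OmncM$; this in turn rests on the moment-vanishing conditions built into $\amtil$ and a Taylor-expansion argument of the type carried out in Lemma \ref{lemmagtozero}, the order $m$ being chosen according to the distributional order of $w$ near $\supp\omega_0$.
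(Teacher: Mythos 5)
The paper does not prove this Proposition itself but cites Proposition~6.2 of \cite{vim}, so there is no in-text proof to compare against. Your argument is correct and uses the expected ingredients from the paper's toolbox: the scalar case of Proposition~\ref{assocprop} (your ``basic association property'', sketched via the same Fubini-plus-Taylor argument the paper runs in the tensorial case), the identification $\iota|_{\Cinf(M)}=\sigma$ modulo $\nhM$ (scalar version of Theorem~\ref{T1}\,(iii)) for part~(i), and the locally uniform convergence of mollifications of continuous functions (scalar case of Proposition~\ref{contassocprop}) for part~(ii). One bookkeeping point worth making explicit: in (i) you invoke the basic association property for two distribution/test-form pairs, $v$ against $f\omega_0$ and $fv$ against $\omega_0$, and these a priori produce two moment orders $m_1,m_2$; one takes $m=\max(m_1,m_2)$ to satisfy the existential quantifier in the definition of association (in fact Proposition~\ref{assocprop} shows $m=0$ always suffices, so this is automatic).
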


It is also useful to introduce the concept of associated distribution or
``distributional shadow'' of a generalized function.
\begin{definition}
We say that $F \in \ghM$ admits $v \in \DpM$ as an
associated distribution if $F \approx \iota(v)$.
\end{definition}
The notion of associated distribution can be expressed through the following
concept of convergence.
\begin{definition}
We say that $F \in \ghM$ converges weakly to  $v \in \DpM$, and write
$F \stackrel{\DpM}{\rightarrow} v$ if for some (hence any)
representative $f \in \ehmM$ of $F$ and for each $\omega \in
\Omega^n_\mathrm{c}(M)$  there exists some $m>0$ so that $\forall \Phi \in
\amtil$
\begin{equation*}
\lim_{\eps \to 0}\int_{M}f(\Phi(\eps,p), p)\omega(p)=\langle
v, \omega \rangle.  
\end{equation*}
\end{definition}
In fact the following proposition states that weak convergence to $v$
is equivalent to having $v$ as an associated distribution.
\begin{proposition}\label{scalarassocprop}
An element $F=[f]$ of $\ghM$ possesses $v \in \DpM$ as an associated
distribution if and only if $F \stackrel{\DpM}{\rightarrow} v$.
\end{proposition}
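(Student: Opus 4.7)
The plan is to unwind both defining properties so that they differ only by a single convergence statement about $\iota(v)$ alone. By definition, $F\approx\iota(v)$ means that for the representative $f-\iota(v)$ of $F-\iota(v)$ and every $\om\in\OmncM$ there is some $m>0$ such that, for all $\Phi\in\amtil$,
\[
\lim_{\eps\to 0}\int_M\bigl[f(\Phi(\eps,p),p)-\iota(v)(\Phi(\eps,p),p)\bigr]\,\om(p)=0.
\]
Using $\iota(v)(\Phi(\eps,p),p)=\langle v,\Phi(\eps,p)\rangle$ (Definition \ref{scalarembed}), both implications of the proposition will then follow by a straightforward addition/subtraction argument from the auxiliary identity
\[
\lim_{\eps\to 0}\int_M\langle v,\Phi(\eps,p)\rangle\,\om(p)=\langle v,\om\rangle \qquad(\ast)
\]
for every $\om\in\OmncM$ and every $\Phi\in\amtil$ with $m$ sufficiently large (depending on $\om$). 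So the entire proof reduces to establishing $(\ast)$.

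For $(\ast)$, the plan is to interchange the $p$-integration with the action of $v$. Since $v$ is continuous on the (LF)-space $\OmncM$ and $p\mapsto\Phi(\eps,p)\,\om(p)$ is, for each fixed $\eps$, a smooth compactly supported family of $n$-forms whose supports in the $q$-slot (by Definition \ref{kernels}(1)(i)) remain inside a fixed compact neighborhood of $\supp\om$, a Fubini-type argument yields
\[
\int_M\langle v,\Phi(\eps,p)\rangle\,\om(p)=\langle v,\om_\eps\rangle,\qquad \om_\eps:=\int_M\Phi(\eps,p)(\,\cdot\,)\,\om(p)\in\OmncM.
\]
By continuity of $v$ it then suffices to show $\om_\eps\to\om$ in the (LF)-topology of $\OmncM$, i.e., with uniform convergence of chart coefficients together with all partial derivatives on a fixed compact set.

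The main obstacle will be this convergence $\om_\eps\to\om$, because the moment estimate built into $\amtil$ (Definition \ref{kernels}(2)) controls integration of $\Phi(\eps,p)(q)$ in $q$ against smooth test functions, whereas $\om_\eps(q)$ integrates in the $p$-slot with $q$ fixed. To circumvent this I would localize to a single chart containing $\supp\om$, write $\Phi(\eps,p)(q)=\eps^{-n}\phi(\eps,p)((q-p)/\eps)$ as in the proof of Lemma \ref{lemmagtozero}, and after the substitution $u=(q-p)/\eps$ obtain
\[
\om_\eps(q)=\int\phi(\eps,q-\eps u)(u)\,\om_0(q-\eps u)\,du.
\]
A twofold Taylor expansion in $\eps$ of both $\om_0(q-\eps u)$ and the $p$-slot of $\phi(\eps,p)$ around $p=q$, combined with the moment bounds from \cite[Lemma 4.2]{vim} (giving $\sup_{x\in K'}|\int\phi(\eps,x)(y)y^\beta\,dy|=O(\eps^{m+1-|\beta|})$ for $1\leq|\beta|\leq m$) and the normalization $\int\phi(\eps,x)(y)\,dy=1$, yields $\om_\eps(q)=\om_0(q)+O(\eps^{m+1})$ uniformly on $K'$; differentiating under the integral sign and repeating the same Taylor argument produces analogous estimates for all partial derivatives, with the order of $m$ needing to exceed the order of $v$ on a compact neighborhood of $\supp\om$. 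This establishes $\om_\eps\to\om$ in $\OmncM$, whence $\langle v,\om_\eps\rangle\to\langle v,\om\rangle$ by continuity of $v$, and $(\ast)$ follows.
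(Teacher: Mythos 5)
Your reduction of the proposition to the single identity $(\ast)$ is correct, and the overall route — interchange the $p$-integral with the action of $v$, localize to a chart, pass to the local representation $\eps^{-n}\phi(\eps,x)((\cdot-x)/\eps)$ from \cite[Lemma 4.2]{vim}, substitute and Taylor expand — is precisely the strategy the paper uses for the more general Proposition~\ref{assocprop}, to which the paper refers for this result. Your reorganization via $\om_\eps\to\om$ followed by continuity of $v$ is a clean way of packaging the same calculation (the paper instead estimates $\langle v, \int\psi_\eps(z,\cdot)\,dz\rangle$ directly, viewing the combined action of $v$ and $\int\dots dp$ as the action of $\mathds{1}\otimes v$, but the content is the same).

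There is, however, a genuine gap in the step you summarize as ``differentiating under the integral sign and repeating the same Taylor argument produces analogous estimates for all partial derivatives.'' Once you differentiate $\om_\eps(q)=\int\phi(\eps,q-\eps u)(u)\,\om_0(q-\eps u)\,du$ in $q$ and Taylor-expand in the $x$-slot around $x=q$, the coefficients involve expressions of the form $\int u^\alpha\,\partial_x^\beta\phi(\eps,q)(u)\,du=\partial_x^\beta c_\alpha(\eps,q)$ with $\beta\neq 0$, i.e.\ derivatives of the moments. The bound you cite from \cite[Lemma 4.2]{vim}, namely $\sup_x|c_\alpha(\eps,x)|=O(\eps^{m+1-|\alpha|})$, controls only $\beta=0$ and does \emph{not} by itself yield any asymptotic bound on $\partial_x^\beta c_\alpha$; increasing $m$ does not help here. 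To close the argument the paper invokes a refined version of \cite[Th.\ 16.5]{found} (stated just before Proposition~\ref{assocprop}): if $\sup_{x\in L}|c(\eps,x)|=O(\eps^q)$, then for every $\beta$ and every $\eta>0$ one has $\sup_{x\in K}|\partial^\beta c(\eps,x)|=O(\eps^{q-\eta})$ on a slightly smaller compact set. Without this interpolation-type estimate the derivative bounds on $\om_\eps-\om$ — and hence the convergence $\langle v,\om_\eps\rangle\to\langle v,\om\rangle$ for $v$ of positive order — do not follow from the stated moment estimate alone. Incidentally, in the paper this same machinery actually yields the stronger conclusion that $(\ast)$ holds already for $m=0$ (i.e.\ for all $\Phi\in\tilde{\ca}_0(M)$, at the cost of an $\eps^{-\eta}$ loss), whereas your version only targets $m$ large; that weaker goal is perfectly sufficient for the proposition, but the derivative-of-moments obstruction is present either way.
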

The proof of Proposition \ref{scalarassocprop} is a slimmed-down version of
that of Proposition \ref{assocprop}, compare Corollary \ref{weakcorol}.
Note that not all  generalized functions have a distributional
shadow. However, if $v \in \DpM$ and $\iota(v) \approx 0$ then $v=0$, so that
provided the distributional shadow exists it is unique.

We now extend this circle of ideas to the tensor case. We start by
defining association for generalized tensor fields.
\begin{definition}
A generalized tensor field $U=[u]\in \ghrsM$ is called associated with
$0$, $U\approx 0$, if for
one (hence any) representative $u$, we have:
\begin{eqnarray*}
& \forall \omega \in \Omega^n_\mathrm{c}(M)\ \forall A\in \bhat\ \forall \tilde
t\in \mathcal{T}^s_r(M) \ \exists m>0
\ \forall \Phi\in \amtil: & \\[5pt]
& {\displaystyle \lim_{\eps\to 0}} \int u(\Phi(\eps,p),p,A)\,\tilde t(p)\,
\omega(p) = 0. &
\end{eqnarray*}
$U_1$, $U_2\in \ghrsM$ are called associated, $U_1\approx U_2$, if $U_1-U_2\approx 0$.
\end{definition}
\begin{definition}
A generalized tensor field $U\in \ghrsM$ is said to admit $v\in \DprsM$
as an associated distribution and $v$ is called the distributional shadow
of $U$, if $U \approx \iors(v)$.
\end{definition}
Employing the localization techniques from the proof of Proposition \ref{wwloc}
we obtain:
\begin{lemma}\label{assrem}
The following statements are equivalent for any $U=[u]\in \ghrsM$:
\begin{itemize}
\item[(i)] $U\approx 0$ in $\ghrsM$.
\item[(ii)] $\forall \tilde t\in \mathcal{T}^s_r(M)$: $U\cdot \tilde t
\approx 0$ in $\ghzzM$.
\item[(iii)] $\forall W\subseteq M$ open: $\forall \omega \in
\Omega^n_\mathrm{c}(W)$
$\forall A\in \bhat$ $\forall \tilde t\in \mathcal{T}^s_r(W)$ $\exists m>0:
\ \forall \Phi\in {\tilde{\mathcal{A}}}_m(W):$
$$
{\displaystyle \lim_{\eps\to 0}} \int u(\Phi(\eps,p),p,A)\,\tilde t(p)\,
\omega(p) = 0.
$$
\end{itemize}
\end{lemma}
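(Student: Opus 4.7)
The plan is to establish (i)$\Leftrightarrow$(ii) by an almost tautological algebraic observation, to deduce (iii)$\Rightarrow$(i) by simply specializing $W=M$, and to obtain (i)$\Rightarrow$(iii) through an extension argument modelled on the proof of Proposition \ref{wwloc}.

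For (i)$\Leftrightarrow$(ii), the key point is that at the level of the basic space the contraction satisfies $(u\cdot\tit)(\om,p,A)=u(\om,p,A)\cdot\tit(p)$. Hence for any $\tit'\in\CinfM$,
\[
(u\cdot\tit)(\Phi(\eps,p),p,A)\,\tit'(p)\,\om(p)=u(\Phi(\eps,p),p,A)\,(\tit'\tit)(p)\,\om(p),
\]
with $\tit'\tit\in\cTsrM$. Thus the $\ghzzM$-association test for $U\cdot\tit$ at data $(\om,A,\tit')$ coincides with the $\ghrsM$-association test for $U$ at data $(\om,A,\tit'\tit)$, and both directions of the equivalence follow by letting $\tit'$ (with $\tit''=1$ for (i)$\Rightarrow$(ii)) resp.\ $\tit$ range freely. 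The implication (iii)$\Rightarrow$(i) is immediate on setting $W:=M$.

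For (i)$\Rightarrow$(iii), fix $W\subseteq M$ open, $\om\in\Omega^n_\mathrm{c}(W)$, $A\in\bhat$, $\tit\in\mathcal{T}^s_r(W)$, and set $K:=\supp\om\comp W$. Pick $\chi\in\mathcal{D}(W)$ with $\chi=1$ on a neighborhood of $K$ and let $\tilde\tit:=\chi\tit\in\cTsrM$ (extended by $0$), so that $\tilde\tit=\tit$ on $\supp\om$. By (i) applied to $(\om,A,\tilde\tit)$ there exists $m>0$ such that
\[
\lim_{\eps\to 0}\int_M u(\Phi'(\eps,p),p,A)\,\tilde\tit(p)\,\om(p)\,dp=0\qquad\forall\,\Phi'\in\tilde{\mathcal{A}}_m(M).
\]
It remains to show that the same $m$ works in (iii). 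Given $\Phi\in\tilde{\mathcal{A}}_m(W)$, choose an auxiliary $\Psi\in\tilde{\mathcal{A}}_m(M)$, a cutoff $\chi'\in\mathcal{D}(W)$ equal to $1$ near $K$, and $\la\in\mathcal{C}^\infty(\R,I)$ with $\la=1$ near $0$, supported in an interval small enough that, by Definition \ref{kernels}\,(1)(i), $\supp\Phi(\eps,p)\subseteq W$ for all $p\in\supp\chi'$ and all $\eps$ with $\la(\eps)\neq 0$. Set
\[
\tilde\Phi(\eps,p):=(1-\chi'(p)\la(\eps))\,\Psi(\eps,p)+\chi'(p)\la(\eps)\,\Phi(\eps,p),
\]
with $\Phi(\eps,p)$ interpreted as its extension by $0$ to $M$. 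Then $\tilde\Phi\in\tilde{\mathcal{A}}_m(M)$, and $\tilde\Phi(\eps,p)=\Phi(\eps,p)$ on a neighborhood of $K$ for small $\eps$. Combined with $\tilde\tit=\tit$ on $\supp\om$, this identifies the displayed integral with $\int_W u(\Phi(\eps,p),p,A)\,\tit(p)\,\om(p)\,dp$ for small $\eps$, whence (iii).

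The main technical point is to verify $\tilde\Phi\in\tilde{\mathcal{A}}_m(M)$. The unit-integral and support properties are immediate from the construction, and smoothness is clear; for the $m$-th moment condition one splits $f(p)-\int f\,\tilde\Phi(\eps,p)$ into its two natural summands. The first is $O(\eps^{m+1})$ uniformly on compact sets since $\Psi\in\tilde{\mathcal{A}}_m(M)$. For the second, the coefficient $\chi'(p)\la(\eps)$ is nonzero only where $\supp\Phi(\eps,p)\subseteq W$, so the integral reduces to one over $W$ against $f|_W\in\mathcal{C}^\infty(W)$, and the desired estimate follows from $\Phi\in\tilde{\mathcal{A}}_m(W)$ applied on the compact set $\supp\chi'\comp W$.
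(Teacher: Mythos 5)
Your proof is correct and fills in exactly the argument the paper alludes to when it says the lemma follows ``employing the localization techniques from the proof of Proposition \ref{wwloc}'': the equivalence (i)$\Leftrightarrow$(ii) is the tautological reformulation you give, (iii)$\Rightarrow$(i) is specialization to $W=M$, and (i)$\Rightarrow$(iii) is the extension of a local smoothing kernel to a global one via the $\chi'$-interpolation from that proposition, combined with cutting $\tit$ down to a global $\tilde\tit$ supported near $\supp\om$. One small remark: the factor $\la(\eps)$ in your definition of $\tilde\Phi$ is actually superfluous here, since any $\Phi\in\tilde{\mathcal A}_m(W)$ takes values in $\hat{\mathcal A}_0(W)$, so $\supp\Phi(\eps,p)\comp W$ automatically holds for all $p\in W$ and all $\eps$; the $\la$-truncation is needed in the converse direction of Proposition \ref{wwloc} (restricting a global kernel to a local one), but not here.
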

\begin{definition} \label{weak}
We say that a generalized tensor field $U \in \ghrsM$ converges weakly to
$v \in \DprsM$, and write $U \stackrel{\DpM}{\rightarrow} v$ if for some
(hence any) representative $u \in \ehrsmM$ of $U$ we have
\begin{eqnarray*}
& \forall \omega \in \Omega^n_\mathrm{c}(M)\ \forall A\in \bhat\ \forall \tilde
t\in \mathcal{T}^s_r(M) \ \exists m>0
\ \forall \Phi\in \amtil: & \\
& {\displaystyle \lim_{\eps\to 0}} \int u(\Phi(\eps,p),p,A)\,\tilde t(p)\,
\omega(p) = \langle v,\tit\otimes\omega\rangle. &
\end{eqnarray*}
\end{definition}

In the proof of the
following result we will make use of a refined version of (the technical core
of) Theorem 16.5 of \cite{found}: For $W$ an open subset of $\R^n$ let
$c:D(\subseteq I\times W)\to\R$ denote a smooth function in the sense of (5) and
(6) of \cite{vim} (with the range space ${\mathcal A}_0(\R^n)$ resp.\
$\cd(\R^n)$ replaced by $\R$). 
Moreover, let $K,L$ be compact subsets of $W$ such that
$K\comp L\comp
W$ and $(0,\eps_0)\times L$ is contained in the interior of $D$. Finally, let
$q>0$, $\eta>0$. If $\sup_{x\in L}|c(\eps,x)|=O(\eps^q)$
then it follows that
$\sup_{x\in K}|\pa^\bet c(\eps,x)|=O(\eps^{q-\eta})$
for every $\bet\in\N_0^n$ . This can be
established along the lines of the proof of Theorem 16.5 of \cite{found}.
\begin{proposition} \label{assocprop}
Let $v\in \DprsM$. Then
\begin{equation}\label{assocproplimit}
\begin{array}{c}
 \forall \omega \in \Omega^n_\mathrm{c}(M)\ \forall A\in \bhat\ \forall \tilde
t\in \mathcal{T}^s_r(M)
\ \forall \Phi\in \atil:  \\[5pt]
 {\displaystyle \lim_{\eps\to 0}} \int \iors(v)(\Phi(\eps,p),p,A)\,\tilde
t(p)\, \omega(p) =
\lgl v,\tit\otimes\om\rgl. 
\end{array}
\end{equation}
In particular, $\iors(v)$ satisfies the conditions of Definition \ref{weak} with $m=0$, so
$$
\iors(v) \stackrel{\Dp}{\rightarrow} v.
$$
\end{proposition}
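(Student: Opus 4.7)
The plan is to interchange the integration over $p$ with the distributional pairing of $v$, and then to establish convergence of the resulting compactly supported test section to $\tit\otimes\om$ in the (LF)-topology on $\cTsrM\otimes_\CinfM\OmncM$. Unfolding Definition~\ref{def-embeddings}(ii), the left-hand side of \eqref{assocproplimit} reads
\[
\int_M \lgl v(\cdot),\,A^s_r(p,\cdot)\,\tit(p)\otimes\Phi(\eps,p)(\cdot)\rgl\,\om(p)\,dp.
\]

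For $\eps$ small, property (1)(i) of Definition~\ref{kernels} together with the compact supports of $\om$ and $A$ shows that the joint integrand (in the pair $(p,\cdot)$) is smooth and supported in a fixed compact subset of $M\times M$. Continuity of $v$ on the (LF)-space $\cTsrM\otimes_\CinfM\OmncM$ then permits a Fubini-type interchange, rewriting the expression as $\lgl v,\xi_\eps\rgl$ with
\[
\xi_\eps(q):=\int_M A^s_r(p,q)\,\tit(p)\,\Phi(\eps,p)(q)\,\om(p)\,dp\;\in\;\cTsrM\otimes_\CinfM\OmncM.
\]

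The main step is then to prove $\xi_\eps\to\tit\otimes\om$ in the (LF)-topology. Working in a finite atlas covering $\supp\om$, the local representation of smoothing kernels and the change of variable $p=q+\eps y$ (as in the proof of Lemma~\ref{lemmagtozero}) combined with Taylor expansion of $A^s_r(\cdot,q)$, $\tit$, and $\om$ at $p=q$ identifies the leading contribution by means of the normalization $\int\Phi=1$, while the subleading terms vanish by the estimates defining $\atil$. Uniform control of Lie derivatives of $\xi_\eps$ on compact sets---needed to strengthen pointwise convergence to (LF)-convergence---follows from (1)(ii) of Definition~\ref{kernels} together with the refined version of \cite[Theorem~16.5]{found} cited before the statement. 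Once this is established, continuity of $v$ gives $\lim_{\eps\to 0}\lgl v,\xi_\eps\rgl=\lgl v,\tit\otimes\om\rgl$, and the weak convergence assertion $\iors(v)\stackrel{\Dp}{\to}v$ with $m=0$ follows directly from Definition~\ref{weak}, since $\amtil\subseteq\atil$ for every $m\in\N$.

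The main obstacle is the (LF)-convergence of $\xi_\eps$: pointwise convergence from the delta-net behavior of $\Phi$ is routine, but passing the limit through $v$ requires simultaneous uniform control of all Lie derivatives of $\xi_\eps$ on a common compact set, which is precisely the purpose of the mixed-derivative estimates in Definition~\ref{kernels}(1)(ii).
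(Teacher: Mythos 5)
Your proof is correct and takes essentially the same route as the paper's: both pass $v$ inside the integral (the paper justifying this by viewing the joint action as that of $\mathds{1}\otimes v$), localize in a chart, Taylor-expand the smooth integrand (including $\phi$ in its first, continuous argument, where the refined version of \cite[Th.~16.5]{found} enters) and obtain uniform $\mathcal{D}$-control on a fixed compact set before applying continuity of $v$. The only stylistic difference is that you formulate the conclusion as (LF)-convergence of $\xi_\eps$, whereas the paper extracts a quantitative rate $O(\eps^{m+1-\eta})$ which is not needed for the limit statement itself.
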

\begin{proof}
Let $\omega$, $A$, $\tit$ and
$\Phi$ be given. Since both sides of (\ref{assocproplimit}) are
linear in $\om$ we may assume
that $\supp\om\comp W$ where $(W,\psi)$ is a chart on $M$.
Let us fix compact subsets $L',L$ of $W$ with $\supp\om \comp L' \comp L \comp
W$. We may suppose without loss of generality that the images of $W,L',L$ under
$\psi$ are balls in $\R^n$.
By the defining
properties of a smoothing kernel there exists $\eps_0>0$ such that for $\eps\leq
\eps_0$ and $p\in L'$ we have
$\supp\Phi(\eps,p)\comp L$. Thus we may further assume without loss of
generality that also $\supp v\comp W$.
Passing to coordinates we may therefore suppose that
$W$ is an (open) ball in $\R^n$,
with $\tit$ and $A$ defined on $W$
resp.\ $W\times W$ and $\om$, $v$
compactly supported in $W$.
Finally, by Lemma 4.2 of
\cite{vim}, the place of $\Phi$ is taken by $\eps^{-n}\phi(\eps,x)
(\frac{y - x}{\eps})$ where $\phi\in\cc^\infty_{b,w}(I\times
W,\mathcal{A}_0(\R^n))$ is defined by
$$
\phi(\eps,x)(y)\,d^ny := \eps^n((\psi^{-1})^*\Phi(\eps,\psi^{-1}(x)))(\eps
y + x).
$$
Writing $x,y$ for $p,q$ and $\vphi\, d^nx$ for $\om$, we obtain that for 
$\eps\le \eps_0$, the expression
$\int \iors(v)(\Phi(\eps,p),p,A)\tilde t(p) \omega(p) -
\lgl v,\tit\otimes\om \rgl$ locally takes the form
\begin{align*}
 \int \Big\lgl &v(y), \Asr(x,y)\tit(x)\eps^{-n}\phi(\eps,x)
 \left(\textstyle{\frac{y - x}{\eps}}\right)\Big\rgl \vphi(x)\, d^nx
 - \lgl v(y), \tit(y)\vphi(y) \rgl \\
&= \Big\lgl v(y), \int \big(\Asr(y-\eps z,y)\tit(y-\eps
z)\vphi(y-\eps z)\phi(\eps,y-\eps z)(z)\\
&\hphantom{mmmmmmmmmmmmmmm} -
\Asr(y,y)\tit(y)\vphi(y)\phi(\eps,y)(z)\big)\, d^nz\Big\rgl.
\end{align*}
Since $\phi: (0,\eps_0)\times (L')^\circ \to \ca_0(\R^n)$ is smooth with 
$\supp \phi(\eps,x)(\frac{.-x}{\eps}) \subseteq L$ for all $\eps,x$
and the evaluation map
$\ev:\ca_0(\R^n)\times\R^n\to\R$ is smooth by \cite[3.13\,(i)]{KM}, the expression
$J(x,y):=\Asr(x,y)\tit(x)\eps^{-n}\phi(\eps,x)
 (\frac{y - x}{\eps}) \vphi(x)$  represents a member of
$\cd(\R^n\times \R^n)^{n^{r+s}}$ (for $\eps<\eps_0$) with $\supp J\subseteq\supp
\vphi\times L$. Therefore, the combined action of
$v$ and integration with respect to $x$ can be viewed as the action of the
distribution $\mathds{1}(x)\otimes v(y)$ on $J$, allowing to interchange $v$
with the integral.
Since  $\phi\in\ca^\triangle_{m,w}(W)$ by Lemma 4.2(A) of \cite{vim}, we have
$\sup_{\xi\in L}|c(\eps,\xi)|=O(\eps^{m+1-|\al|})$ for
$c(\eps,x):=\int_{\R^n}\phi(\eps,x)(z)z^\al\, \diff^n z$ and $1\leq|\al|\leq
m$. From the analogue of Theorem 16.5 of \cite{found} discussed above we
infer, for every $\bet\in\N_0^n$,
$$\sup_{\xi\in L'}|\pa^\bet c(\eps,\xi)|
=\sup_{\xi\in L'}\left|\int_{\R^n}\pa^\bet\phi(\eps,\xi)(z)z^\al\, \diff^n
z\right|
=O(\eps^{m+1-|\al|-\eta}).$$
Now, applying Taylor expansion of order $m$ to every tensor component of
\begin{align*}
\psi_\eps(z,y):=\Asr(y-\eps z,y)\,\tit(y-\eps z)  \vphi(y-&\eps z) 
\phi(\eps,y-\eps z)(z)\\
&- \Asr(y,y)\tit(y)\vphi(y)\phi(\eps,y)(z)
\end{align*}
and integrating with respect to $z$ we obtain estimates of order
$\eps^{m+1-\eta}$ for the terms of the Taylor polynomials and of order
$\eps^{m+1}$ for the respective remainder terms, uniformly for $y\in L'$.
For $m=0$, the
Taylor expansions consist of the remainder terms solely, allowing overall
estimates even by $\eps^{m+1}$. Moreover, $\psi_\eps(z,y)$ vanishes for $y\notin
L'$. On the basis of analogous
asymptotics for $\int\pa_y^\bet\psi_\eps(z,y)\,\diff^n z$
it follows that $\eps^{-(m+1-\eta)}\cdot\int\psi_\eps(z,y)\,\diff^n z$ is
bounded
in $\cd(\R^n)^{n^{r+s}}$.  Altogether, we obtain $\lgl
v(.),\int\psi_\eps(z,.)\,\diff^n z\rgl$ being of order
$\eps^{m+1-\eta}$ resp.\ $\eps^{m+1}$ (for $m=0$), thereby establishing our
claim.
\end{proof}
\begin{corollary} \label{weakcorol}
An element $U$ of $\ghrsM$ possesses $v\in \DprsM$ as an associated
distribution if and only if
$U\stackrel{\Dp}{\rightarrow} v$.
\end{corollary}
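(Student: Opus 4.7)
The plan is to derive the equivalence as an essentially immediate consequence of Proposition~\ref{assocprop} combined with linearity of the defining integrals. The key observation is that Proposition~\ref{assocprop} already shows $\iors(v) \stackrel{\Dp}{\to} v$ in an especially strong form: the convergence
$$
\lim_{\eps \to 0} \int \iors(v)(\Phi(\eps,p),p,A)\,\tit(p)\,\om(p) \;=\; \lgl v,\tit\otimes\om\rgl
$$
holds for \emph{every} $\Phi \in \atil$, that is, uniformly in the choice of the index $m$ (indeed $m = 0$ already works). This is precisely what permits a quantifier-compatible translation between the $\approx$-condition and the $\stackrel{\Dp}{\to}$-condition, both of which only demand the existence of some $m$ for each test datum.

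Concretely, I would fix a representative $u \in \ehrsmM$ of $U$ and arbitrary test data $\om \in \OmncM$, $A \in \bhat$, $\tit \in \cTsrM$. Additivity of the integral yields, for every $m \in \N_0$ and every $\Phi \in \amtil$, the identity
$$
\int u(\Phi(\eps,p),p,A)\,\tit(p)\,\om(p) \;=\; \int (u - \iors(v))(\Phi(\eps,p),p,A)\,\tit(p)\,\om(p) \;+\; \int \iors(v)(\Phi(\eps,p),p,A)\,\tit(p)\,\om(p).
$$
Passing to the limit $\eps \to 0$, the last integrand on the right-hand side tends to $\lgl v,\tit\otimes\om\rgl$ by Proposition~\ref{assocprop}, while the middle integrand is exactly the quantity tested by the relation $U - \iors(v) \approx 0$ (note that $u - \iors(v)$ is a representative of the class $U - \iors(v)$). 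Hence the existence of an $m$ such that the left-hand side converges to $\lgl v,\tit\otimes\om\rgl$ for all $\Phi \in \amtil$ is equivalent to the existence of an $m$ such that the middle integral vanishes in the limit for all $\Phi \in \amtil$. This is precisely the chain of equivalences
$$U \stackrel{\Dp}{\to} v \;\Longleftrightarrow\; U - \iors(v) \approx 0 \;\Longleftrightarrow\; U \approx \iors(v).$$

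There is no substantial obstacle here; the only technical point to highlight is that Proposition~\ref{assocprop} supplies the required limit unconditionally on the order $m$ of $\Phi$, which is exactly what synchronizes the existential quantifier on $m$ appearing on both sides. The fact that the condition is independent of the choice of representative of $U$ is built into Definition~\ref{weak} and is itself a direct consequence of Definition~\ref{negdef}: for a negligible element $n \in \nhrsM$, the scalar function $p \mapsto n(\Phi(\eps,p),p,A)\cdot\tit(p)$ is bounded by $O(\eps^m)$ uniformly on $\supp\om$ for $\Phi \in \tilde{\mathcal{A}}_k(M)$ with $k$ sufficiently large, so its integral against $\om$ vanishes in the limit.
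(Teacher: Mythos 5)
Your argument is correct and is precisely the one the paper leaves implicit: decompose $\int u\cdot\tit\,\om$ into $\int(u-\iors(v))\cdot\tit\,\om + \int\iors(v)\cdot\tit\,\om$ and use Proposition~\ref{assocprop} to control the second term for \emph{every} $\Phi\in\atil$ (hence independently of the index $m$), which synchronizes the existential quantifiers on $m$ in the definitions of $\approx$ and $\stackrel{\Dp}{\to}$. You also correctly flag the only point worth checking, namely that the $m{=}0$ strength of Proposition~\ref{assocprop} is what makes the quantifier bookkeeping trivial, and your remark on representative-independence via negligibility is accurate.
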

It follows from Corollary \ref{weakcorol} and Proposition
\ref{assocprop} that the distributional shadow of a generalized tensor field
is unique (if it exists).

For continuous tensor fields we obtain stronger (locally uniform)
convergence properties:
\begin{proposition}\label{contassocprop}
Let $t$ be a continuous $(r,s)$-tensor field on $M$. Then
\begin{eqnarray*}
& \forall K\comp M\ \forall A\in \bhat\ \forall \tilde t\in
\mathcal{T}^s_r(M) \ \forall \Phi\in \atil: & \\[5pt]
& {\displaystyle \lim_{\eps\to 0}} \displaystyle\sup_{p\in K}
\left|\iors(t)(\Phi(\eps,p),p,A)\tilde t(p)
- t(p)\cdot\tit(p)\right|
= 0. &
\end{eqnarray*}
\end{proposition}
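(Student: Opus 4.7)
The idea is to unfold the integral representation of $\iors(t)$ that is available because $t$ is a regular distributional tensor field, and then to exploit the delta-net concentration of the smoothing kernels. From Definition \ref{def-embeddings}(ii),
$$\iors(t)(\Phi(\eps,p),p,A)\cdot\tit(p)=\int_M t(q)\cdot A^s_r(p,q)\,\tit(p)\,\Phi(\eps,p)(q).$$
Using the normalization $\int_M\Phi(\eps,p)=1$ to absorb the subtracted term, the quantity under the supremum becomes
$$\left|\int_M h(p,q)\,\Phi(\eps,p)(q)\right|,\qquad h(p,q):=t(q)\cdot A^s_r(p,q)\tit(p)-t(p)\cdot\tit(p).$$
The function $h$ is continuous on $M\times M$, and---as in the tests of Definitions \ref{moddef} and \ref{negdef}---in the regime of interest $K\subseteq\core(A)$, so that $A^s_r(p,p)$ acts as the identity on $(T^s_r)_pM$ and hence $h(p,p)=0$ for all $p\in K$.

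The remainder is the standard ``bump-and-concentrate'' estimate. Given $\eta>0$, uniform continuity of $h$ on a compact neighborhood of the diagonal above $K$ (with respect to the fixed background Riemannian metric used in Definition \ref{kernels}) gives $\de>0$ such that $|h(p,q)|<\eta$ whenever $p\in K$ and $d(p,q)<\de$. Property (1)(i) of Definition \ref{kernels} then provides $\eps_0>0$ with $\supp\Phi(\eps,p)\subseteq B_\de(p)$ for all $p\in K$ and $\eps\leq\eps_0$, while the $k=l=0$ instance of (1)(ii) yields $\|\Phi(\eps,p)\|_\infty=O(\eps^{-n})$ uniformly in $p\in K$. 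Combined with the volume bound $|B_{\eps C}(p)|=O(\eps^n)$, these produce a uniform $L^1$-estimate $\sup_{p\in K}\int_M|\Phi(\eps,p)|\leq C'$. Therefore, for $\eps\leq\eps_0$, the supremum over $K$ of the expression in question is at most $\eta C'$; letting $\eta\to 0$ completes the proof.

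\textbf{Main obstacle.} The only genuinely delicate point is the diagonal vanishing $h(p,p)=0$, which rests on $A^s_r(p,p)=\mathrm{id}$, i.e.\ on $K\subseteq\core(A)$; without this hypothesis the correct uniform limit would instead be $t(p)\cdot A^s_r(p,p)\tit(p)$, matching $t(p)\cdot\tit(p)$ only over the core. Beyond this, the argument is markedly simpler than the Taylor-based proof of the purely distributional version in Proposition \ref{assocprop}: the continuity of $t$ is used only to convert the distributional pairing into an ordinary integral and to invoke uniform continuity of $h$ on compacta, and no cancellation of moments of $\Phi$ is needed (hence $m=0$ suffices).
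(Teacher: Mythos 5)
The paper gives no explicit proof of Proposition \ref{contassocprop}, but your direct $\eps$-$\de$ argument is correct and is the natural one here: rewrite $\iors(t)(\Phi_{\eps,p},p,A)\cdot\tit(p)-t(p)\cdot\tit(p)$ using the normalization $\int\Phi_{\eps,p}=1$ as $\int_M h(p,\cdot)\,\Phi_{\eps,p}$ with $h(p,q)=t(q)\cdot\Asr(p,q)\tit(p)-t(p)\cdot\tit(p)$, note that $h$ is continuous with $h(p,p)=0$, and conclude via uniform continuity of $h$ near the diagonal over $K$ together with the uniform $L^1$-bound on the kernels (which follows from the $O(\eps^{-n})$ sup-estimate of Definition \ref{kernels}\,(1)(ii) with $k=l=0$ combined with the $O(\eps^n)$ volume of the shrinking supports from (1)(i)). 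This deliberately avoids the local-coordinate and Taylor machinery that the paper uses in proving Proposition \ref{assocprop}; that machinery is genuinely needed there to extract the $\eps^{m+1}$ rate for $\Phi\in\amtil$ and distributional $v$, but is overkill here where $m=0$ suffices and $t$ is continuous.

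Your flagged obstacle is real and important: as stated, the proposition quantifies over all $A\in\bhat$, but the asserted limit cannot hold without the hypothesis $K\subseteq\core(A)$. For instance, with $A=0$ (or any $A$ vanishing on the diagonal over $K$) one has $\iors(t)(\om,p,A)\equiv 0$, so the quantity under the supremum reduces to $|t(p)\cdot\tit(p)|$ and does not tend to $0$ unless $t\cdot\tit$ vanishes on $K$. The same implicit restriction is present but unstated throughout this section's definitions of association and weak convergence, and in the proof of Proposition \ref{assocprop}, which silently identifies $\Asr(y,y)\tit(y)$ with $\tit(y)$ when subtracting $\lgl v,\tit\otimes\om\rgl$. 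Reading all of these statements with $K\subseteq\core(A)$ (or $\supp\om\subseteq\core(A)$ in the integrated version), as you do, is the correct fix; with that hypothesis made explicit your proof is complete.
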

As a consequence of this result and the proof of Proposition \ref{assocprop} we
obtain compatibility
of the embedding $\iors$ with the standard products on $\mathcal{C}\times
\mathcal{C}$ and
$\Cinf \times \Dp$ in the sense of association:

\begin{corollary}\label{assprod} \
\begin{itemize}
\item[(i)] Let $t\in \mathcal{T}^{r_1}_{s_1}$, $v\in
{\mathcal{D}'}^{r_2}_{s_2}$.
Then $\iota^{r_1}_{s_1}(t)\otimes \iota^{r_2}_{s_2}(v) \approx
\iota^{r_1+s_1}_{r_2+s_2}(t\otimes v)$.
\item[(ii)] Let $t_1,\ t_2$
be continuous tensor fields of order
$(r_1,s_1)$ resp.\ $(r_2,s_2)$. Then
$\iota^{r_1}_{s_1}(t_1)\otimes \iota^{r_2}_{s_2}(t_2) \approx
\iota^{r_1+s_1}_{r_2+s_2}(t_1\otimes t_2)$.
\end{itemize}
\end{corollary}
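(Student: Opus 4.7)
The plan is to test each statement against elementary tensors $\tilde t_1\otimes\tilde t_2\otimes\omega$, to which every relevant integral can be reduced using a partition of unity subordinate to chart neighborhoods where $\cTsrM$ admits a basis of decomposable tensors. Thus it suffices in both (i) and (ii) to fix $\omega\in\OmncM$, $A\in\bhat$, $\tit_i\in\cT^{s_i}_{r_i}(M)$ and show that, for $\Phi$ in a suitable $\amtil$,
\[
\int\bigl[(\iota^{r_1}_{s_1}t_1)\otimes(\iota^{r_2}_{s_2}v)\bigr](\Phi(\eps,p),p,A)\!\cdot\!(\tit_1\otimes\tit_2)(p)\,\om(p)
\;-\;\int\iota(t\!\otimes\!v)(\Phi(\eps,p),p,A)\!\cdot\!(\tit_1\otimes\tit_2)(p)\,\om(p)
\]
tends to $0$ (with $v$ replaced by the continuous $t_2$ in (ii)).

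For (ii) the argument is short. By Proposition~\ref{contassocprop}, the scalar functions $a_\eps(p):=\iota(t_1)(\Phi(\eps,p),p,A)\cdot\tit_1(p)$ and $c_\eps(p):=\iota(t_2)(\Phi(\eps,p),p,A)\cdot\tit_2(p)$ converge uniformly on $K:=\supp\om$ to $(t_1\cdot\tit_1)(p)$ resp.\ $(t_2\cdot\tit_2)(p)$. Hence $a_\eps c_\eps$ is uniformly bounded and converges uniformly on $K$ to $(t_1\otimes t_2)\cdot(\tit_1\otimes\tit_2)$, so the first integral tends to $\int(t_1\otimes t_2)\cdot(\tit_1\otimes\tit_2)\,\om$. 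Applying Proposition~\ref{contassocprop} again to the continuous tensor field $t_1\otimes t_2$ with test tensor $\tit_1\otimes\tit_2$ yields the same limit for the second integral.

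For (i), using $(\iota(v)\cdot\tit_2)(p)=\langle v(q),\Asrpq\tit_2(p)\otimes\Phi(\eps,p)(q)\rangle$ and setting
\[
s(p,q):=t(q)\cdot A^{s_1}_{r_1}(p,q)\tit_1(p),\qquad \alpha_\eps(p):=\int s(p,q)\Phi(\eps,p)(q),
\]
a direct computation gives
\[
(U_\eps-V_\eps)(p)=\bigl\langle v(q),\,A^{s_2}_{r_2}(p,q)\tit_2(p)\,[\alpha_\eps(p)-s(p,q)]\,\Phi(\eps,p)(q)\bigr\rangle.
\]
Splitting $\alpha_\eps(p)-s(p,q)=[\alpha_\eps(p)-s(p,p)]+[s(p,p)-s(p,q)]$ yields two contributions. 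The first factors as $[\alpha_\eps(p)-s(p,p)]\cdot(\iota(v)(\Phi(\eps,p),p,A)\cdot\tit_2(p))$; Lemma~\ref{lemmagtozero} gives $|\alpha_\eps(p)-s(p,p)|=O(\eps^{m+1})$ uniformly in $p\in K$, while moderateness of $\iota(v)\cdot\tit_2$ (Theorem~\ref{T1}\,(i) together with saturation) yields $O(\eps^{-N})$. Choosing $m>N$ makes this term's integral vanish in the limit. For the second contribution we interchange $v$ with the $p$-integral (legitimate since the integrand is smooth and compactly supported), obtaining $\langle v,I_\eps\rangle$ with
\[
I_\eps(q):=\int A^{s_2}_{r_2}(p,q)\tit_2(p)\,[s(p,p)-s(p,q)]\,\Phi(\eps,p)(q)\,\om(p)\,dp\;\in\;\cT^{s_2}_{r_2}(M)\otimes\OmncM.
\]

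The last step, which is the main technical point, is to show $\langle v,I_\eps\rangle\to 0$. Because $g(p,q):=s(p,p)-s(p,q)$ vanishes on the diagonal, the Taylor-expansion technique of the proof of Proposition~\ref{assocprop}, together with the refined version of \cite[Thm.\ 16.5]{found} quoted there, gives, for $\Phi\in\amtil$ with $m$ sufficiently large, uniform estimates $\|\partial_q^\beta I_\eps\|_\infty=O(\eps^{m+1-\eta-|\beta|})$ on a fixed compact neighborhood of $\supp\om$; the supports of $I_\eps$ are uniformly contained in an $\eps$-neighborhood of $\supp\om$. Hence $I_\eps\to 0$ in the (LF)-topology of $\cT^{s_2}_{r_2}(M)\otimes\OmncM$, so continuity of $v$ finishes the proof. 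The main obstacle is precisely this Taylor bookkeeping with parameter-dependent test forms: one must be careful that differentiating in $q$ costs powers of $\eps$, so $m$ has to be chosen larger than the number of derivatives required to obtain continuity of $v$ on the support in question.
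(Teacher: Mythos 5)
Your proof is correct and follows essentially the route the paper indicates: Proposition~\ref{contassocprop} directly for part~(ii), and for part~(i) the factorization of $U_\eps-V_\eps$ into a term handled by Lemma~\ref{lemmagtozero} together with the moderateness estimate from Theorem~\ref{T1}\,(i), plus the Taylor/moment technique from the proof of Proposition~\ref{assocprop} for the residual $\langle v, I_\eps\rangle$. One small overstatement: after the change of variables $p=q-\eps z$, $q$-derivatives of $I_\eps$ only hit the slow (base-point) slot of $\phi$, so the refined Theorem~16.5 of \cite{found} in fact gives $O(\eps^{m+1-\eta})$ uniformly in $\beta$ rather than the $-|\beta|$ loss you state; your weaker estimate still closes the argument by choosing $m$ large relative to the local order of $v$, so this does not affect correctness.
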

\begin{remark}
Guided by the notion of convergence given in Proposition \ref{contassocprop},
we may introduce the concept
of $\mathcal{C}^0$-association: $U=[u]\in \ghrsM$ is called
$\mathcal{C}^0$-associated with $0$,
$U\approx_0 0$, if for one (hence any) representative $u$, we have:
\begin{eqnarray*}
& \forall K\comp M\ \forall A\in \bhat\ \forall \tilde t\in
\mathcal{T}^s_r(M) \
\exists m>0
\ \forall \Phi\in \amtil: & \\[5pt]
& {\displaystyle \lim_{\eps\to 0}} \displaystyle\sup_{p\in K}
\left|u(\Phi(\eps,p),p,A)\tilde t(p)\right| = 0. &
\end{eqnarray*}
$U_1$, $U_2\in \ghrsM$ are called $\mathcal{C}^0$-associated, $U_1\approx_0
U_2$, if $U_1-U_2\approx_0 0$.
Moreover, for $t$ a continuous $(r,s)$-tensor field we write $U\approx_0
t$ if $U\approx_0
\iors(t)$.

Analogously we may introduce the concept of $\mathcal{C}^k$-association
by considering
$\mathcal{C}^k$-convergence instead of $\mathcal{C}^0$-convergence in
the above definition
($k\in \N_0$).
With this notion, Corollary \ref{assprod}\,(ii) can be strengthened: if
$t_1,t_2$ are
$\mathcal{C}^k$-tensor fields then $\iota^{r_1}_{s_1}(t_1)\otimes
\iota^{r_2}_{s_2}(t_2)
\approx_k \iota^{r_1+s_1}_{r_2+s_2}(t_1\otimes t_2)$. In fact, for any
$\mathcal{C}^k$-$(r,s)$-tensor field $t$, $\iors(t)\approx_k t$.
\end{remark}

\begin{appendix}
\appsection{Transport operators and two-point tensors}\label{appendixA}

In this appendix we collect the main definitions, notations and properties of
transport operators resp.\ two-point tensors.

Let $M$, $N$ be smooth paracompact Hausdorff manifolds of (finite) dimensions $n$
and $m$, respectively.
We consider the vector bundle
$$
\TOMN :=\LTMTN := \bigcup_{(p,q)\in M\times N} \{(p,q)\} \times
\mathrm{L}(\TpM,\TqN)
$$
of transport operators on $M\times N$.
For charts $(U,\vphi=(x^1,\dots,x^n))$, $(V,\psi=(y^1,\dots,y^m))$ of $M$
resp.\ $N$,
a typical vector bundle chart (or vb-chart, for short) of $\TOMN$ is given by
\begin{eqnarray*}
\Psi_{\vphi\psi}: \bigcup_{(p,q)\in U\times V} \{(p,q)\} \times
\mathrm{L}(\TpM,\TqN)
&\to& \vphi(U)\times \psi(V)\times \R^{mn} \\
((p,q),A) &\mapsto& ((\vphi(p),\psi(q)), (dy^i(A\partial_{x_j}))_{i,j})\,.
\end{eqnarray*}
Setting $\varphi_{21}:=\varphi_2\circ\varphi_1^{-1}$ and analogously for
$\psi$, the
transition functions for $\TOMN$ are given by
$$
\Psi_{\vphi_2\psi_2} \circ \Psi_{\vphi_1\psi_1}^{-1}((x,y),a) =
((\vphi_{21}(x),\psi_{21}(y)), D\psi_{21}(y)\cdot a \cdot
D\vphi_{21}(x)^{-1})\,.
$$
Elements of $\Gamma(\TOMN)$\label{ro:transportoperators},
i.e., smooth sections of $\TOMN$, are also called transport operators.

Alternatively, transport operators can be viewed as sections of a suitable
pullback bundle. To fix notations (following \cite{dieudonne3}), let
$E\stackrel{\pi}{\to}B$ be a vector bundle, $B'$ a manifold and $f:
B'\to B$ a smooth map. We denote
by $E'=f^*(E)$ the pullback bundle of $E$ under $f$.  
The total space of $f^*(E)$ is the closed submanifold
$B' \times_B E := \left\{ (b', e) \in B' \times E\ |\ f(b') = \pi(e)
\right\}$
of $B'\times E$, and the projection is $\pi' = {\mathrm pr}_1|_{B'
\times_B E}$,
i.e.,
we have the following diagram, where $f' := \prt|_{B' \times_B E}$:
\[\begin{CD}
E' @>{f'}>> E\\
@V{\pi'}VV @VV{\pi}V\\
B' @>>f> B
\end{CD}
\]

If $(\pi^{-1}(U),\Psi)$ is a vb-chart for $E$ with
$\Psi = (e_b \mapsto (\Psi^{(1)}(b), \Psi^{(2)}(e_b)))$ and if $(V,\vphi)$ is
a chart in
$B'$ such that $V \cap f^{-1}(U)\neq \emptyset$, then
$$
\begin{aligned}
\varphi \times_B \Psi: {\pi'}^{-1}(V \cap f^{-1}(U)) &\rightarrow \varphi(V)
\times \mathbb R^N\\
(b', e) &\mapsto (\varphi(b'), \Psi^{(2)}(e))
\end{aligned}
$$
is a typical vb-chart for $f^*(E)$. To
obtain the explicit
form of
the transition functions of $f^*(E)$, let $\varphi_{21} := \varphi_2 \circ
\varphi_1^{-1}$ be
a change of charts in $B'$ and
$\Psi_2 \circ \Psi_1^{-1} (x, \xi) = (\Psi_{21}^{(1)}(x), \Psi_{21}^{(2)}(x)
\cdot \xi)$ a change
of vb-charts in $E$. Then the corresponding change of vb-charts in
$f^*(E)$ is
given by
$$
(\varphi_2 \times_B \Psi_2) \circ (\varphi_1 \times_B \Psi_1)^{-1}(x',
\eta) =
(\varphi_{21}(x'), \Psi_{21}^{(2)}(\Psi_1^{(1)} \circ f \circ
\varphi_1^{-1}(x')) \cdot \eta).
$$
For the particular case where $f$ is of the form $\prt:M\times N\to N$ for
manifolds $M$, $N$ and a vector bundle $E\stackrel{\pi}{\to}N$, there is a
simplified way of representing $\prtS E$ as a vector bundle over $M\times
N$ (a similar statement being true for $f=\pro$ and a vector bundle
$E\stackrel{\pi}{\to}M$) which we will use freely wherever convenient:
Namely,
$\prtS E$ can be realized in this case as the (full) product
manifold $M\times E$ (rather than as $(M\times N)\times_N E$), with
projection $\id_M\times \pi:(p,v)\mapsto (p,\pi(v))$.

Now for $M$, $N$ as above and $\pro$ and $\prt$ the projection maps of
$M\times N$ onto $M$ resp.\ $N$,
we apply the above constructions to obtain the bundle
$$
\TPMN := (\poTSM \otimes \ptTN,M\times N,\pi_\otimes)
$$
of {\em two-point tensors} on $M\times N$.

Sections of $\TPMN$
will also be called two-point tensors. Note that
any element of $\Gamma(\TPMN)$\label{ro:two-point tensors} is a finite sum
of sections of the form
\begin{equation} \label{generictpt}
(p,q)\mapsto f(p,q)\,\eta(p)\otimes \xi(q)
\end{equation}
where $\eta \in \Om^1(M)$, $\xi\in {\mathfrak X}(N)$ and $f\in \Cinf(M\times
N)$.
We will call two-point tensors of this form {\em generic}.

We will use the following notation for the obvious connection between
transport operators and two-point tensors:
For $V$, $W$ finite dimensional vector spaces we have the canonical
isomorphism
\begin{eqnarray*}
\bullet: V^*\otimes W &\to& \mathrm{L}(V,W) \\
(\alpha \otimes w)_\bullet(v) &:=& \langle \alpha,v\rangle w
\end{eqnarray*}
which induces a strong vb-isomorphism (in the sense of \cite[ch.\ II, \S 1]{GHV})
\begin{equation} \label{blob}
\bullet: \TPMN \to \TOMN\,.
\end{equation}
For a two-point tensor $\Upsilon \in \Gamma(\TPMN)$ we denote by
$\Upsilon_\bullet$
the corresponding transport operator in $\Gamma(\TOMN)$. 
Viewing transport operators as two-point tensors (as was done in \cite{sotonTF})
has some advantages when doing explicit calculations (cf., e.g., the formula
for the Lie derivative (\ref{tplie}) below). Moreover, we note that
$\TPMN$ is canonically isomorphic to the first jet bundle $J^1(M,N)$
(cf.\ \cite[12.9]{michorbook}). $\TPMN$ also appears as the
particular case $\TSM\boxtimes\TN$ of the so-called {\it external tensor
product} $E\boxtimes F$ of vector bundles $E\to M$, $F\to N$ in 
\cite[Ch.\ II, Problem 4]{GHV}.

Given diffeomorphisms $\mu: M_1 \to M_2$ and $\nu: N_1 \to N_2$, we have
a natural pullback action
$$(\mu,\nu)^*: \Ga(\TO(M_2,N_2))
\to \Ga(\TO(M_1,N_1))\,,
$$
given (for a transport operator $A\in \Gamma(\TO(M_2,N_2))$) by
\begin{equation}\label{topullback}
((\mu,\nu)^*A)(p,q) = (\mathrm{T}_q\nu)^{-1} \circ A(\mu(p),\nu(q)) \circ
\mathrm{T}_p\mu\,.
\end{equation}
Similarly, we obtain a natural pullback action
$$
(\mu,\nu)^*: \Gamma(\TP(M_2,N_2)) \to
\Ga(\TP(M_1,N_1)),
$$
defined on generic two-point tensors by
\begin{align}
((\mu,\nu)^*(f \eta\otimes \xi))(p,q) :&=
f(\mu(p),\nu(q))(\mathrm{T}_p\mu)\adj(\eta(\mu(p)))
\otimes (\mathrm{T}_q\nu)^{-1}(\xi(\nu(q))) \nonumber\\
&= ((\mu,\nu)^*f)(p,q) \mu^*\eta(p) \otimes \nu^*\xi(q).
\label{tppullback}
\end{align}
In case $M_1=M_2$, $N_1=N_2$ and $\mu=\nu$ we simply write $\mu^*$ instead of
$(\mu,\mu)^*$. Note, however, that this special case is not sufficient for the
purpose of this article, cf.\ the respective remark preceding Definition
\ref{def:mu}.

As can easily be checked, these pullback actions commute with the
isomorphism (\ref{blob}): for any $\Upsilon\in \Ga(\TP(M_2,N_2))$ we have
\begin{equation} \label{bulletcommute}
(\mu, \nu)^*(\Upsilon_\bullet) = ((\mu, \nu)^*\Upsilon)_\bullet\,.
\end{equation}

\label{ro:muxnu1}
Although $\TOMN$ is a vector bundle over $M\times N$, it is important to
note that an arbitrary diffeomorphism $\rho:M_1\times N_1\to M_2\times N_2$ does not,
in general,
induce a natural pullback action $\rho^*:\Gamma(\TO(M_2,N_2)) \to
\Ga(\TO(M_1,N_1))$ which reduces to (\ref{topullback}) for the particular
case
$\rho=\mu\times\nu$ as above. 
For a counterexample, consider the flip operator $\fl:M\times N\to N\times M$.
An analogous statement holds for the case of
two-point tensors. This is the reason why we use the notation
$(\mu,\nu)^*$ for the above actions rather than $(\mu\times\nu)^*$,
which would give the wrong impression of being the composition of
$\mu\times \nu$
with the (non-existent) pullback operation for general diffeomorphisms
alluded
to above. This underlines that TP and TO have to be treated as genuine
bifunctors and
cannot be factorized via $(M,N)\mapsto M\times N$ composed with a
single-argument functor.

Before introducing the Lie derivative of transport operators let us recollect
some basic facts on the Lie derivative of smooth sections of a vector bundle
$E$ over $M$ with respect to a smooth vector field $X\in\XM$.

Following \cite[6.14--15]{michorbook}, we assume that a functor $F$ is
given, assigning a vector bundle $F(M)$ over $M$ to every manifold $M$ of
dimension $n$. Moreover, to every local diffeomorphism $\mu:M\to N$, the functor
$F$ assigns a vector bundle homomorphism $F(\mu):F(M)\to F(N)$ over $\mu$,
acting as a linear isomorphism on each fiber. Given an arbitrary smooth vector
field $X\in\XM$, we assume that the local action $(\tau,v)\mapsto F(\FlXt)v$ is
a smooth function of $(\tau,v)$, mapping some $(-\tau_0,+\tau_0)\times F(M)|_U$
into $F(M)$, where $\tau_0>0$, $U$ is an open subset of $M$ and $\FlXt:
(-\tau_0,+\tau_0)\times U\to M$. Then for $\eta\in\Ga(F(M))$, we define the
pullback of $\eta$ under $\FlXt$ locally by
$(\FlXt)^*\eta:=F(\FlXmt)\,\circ\,\eta\,\circ\,\FlXt$. $(\FlXt)^*\eta$ being
smooth on 
$(-\tau_0,+\tau_0)\times U$, we set $(\LX\eta)(p):=\ddtz((\FlXt)^*\eta)(p)$.
Smoothness in $\tau$ (for $p$ fixed) yields existence of $(\LX\eta)(p)$ while
smoothness in $(\tau,p)$ yields smoothness of the local section $\LX\eta$
of $F(M)|_U$. The family of all such local sections consistently defines
$\LX\eta\in\Ga(F(M))$. All the preceding applies, in particular, to
$F(M):=\TsrM$ and $F(M):=\bigwedge^n\TSM$.

In the following, we fix $M$ and write $E$ for $F(M)$.

\begin{remark}\label{lierough}
Under specific assumptions, we can say more about the action of flows resp.\
about Lie derivatives:
\begin{itemize}
\item[(i)]
If $X$ is complete, we can take $U=M$ in the above which renders $\FlXt$,
$F(\FlXt)$ and $(\FlXt)^*\eta$ (smooth and) defined globally on $\R\times M$
resp.\ $\R\times E$ resp.\ $\R\times\GaE$. In this case,
$\frac{1}{\tau}[(\FlXt)^*\eta - \eta]$ tends to $\LX\eta$ as
$\tau\to0$ in the linear space $\GaE$ with respect to the topology of pointwise
convergence on $M$.
\item[(ii)]
If $\supp\eta$ is compact there exists $\tau_0$ such that a local version of
$(\FlXt)^*\eta$ can be extended from $(-\tau_0,+\tau_0)\times
U$ to $(-\tau_0,+\tau_0)\times M$ by values $0$. In this case,
$\frac{1}{\tau}[(\FlXt)^*\eta - \eta]$ tends to $\LX\eta$ as
$\tau\to0$ in the linear space $\GacE$ with respect to the topology of pointwise
convergence on $M$.
\end{itemize}
\end{remark}
In the sequel, $\GaE$ and $\GacE$ will always be equipped with the (F)-
resp.\ the (LF)-topology. The following Proposition strengthens the statements
of the preceding remark, making essential use of calculus in convenient vector
spaces (cf.\ Appendix B).

\begin{proposition}\label{lierefinement}
Let $F$ be a functor as specified above and let $X\in \mathfrak{X}(M)$.
\begin{enumerate}
\item[(1)]
Assume $X$ to be complete. Then 
\begin{enumerate}
\item[(i)]
$(\tau,\eta)\mapsto(\FlXt)^*\eta$ is smooth from $\R\times\GaE$ into $\GaE$.
\item[(ii)]
$(\tau,\om)\mapsto(\FlXt)^*\om$ is smooth from $\R\times\GacE$ into $\GacE$.
\item[(iii)]
$\LX\eta=\lim\limits_{\tau\to0}\frac{1}{\tau}[(\FlXt)^*\eta - \eta]$ in $\GaE$,
for every $\eta\in\GaE$.
\end{enumerate}
\item[(2)]
Let $X$ be arbitrary, $K\comp M$ and $\tau_0>0$ such that
$(\FlXt)^*\om$ is defined for all $\om\in\Ga_{\mathrm{c},K}(E)$ and all
$|\tau|\leq\tau_0$. Then 
\begin{enumerate}
\item[(i)]
$(\tau,\om)\mapsto(\FlXt)^*\om$ is smooth from
$(-\tau_0,+\tau_0)\times\Ga_{\mathrm{c},K}(E)$ into $\GacE$.
\item[(ii)]
$\LX\om=\lim\limits_
{\atopmg{\tau\to 0}{|\tau|<\tau_0}}
\frac{1}{\tau}[(\FlXt)^*\om - \om]$ in $\GacE$, for every
$\om\in\Ga_{\mathrm{c},K}(E)$.
\end{enumerate}
\end{enumerate}

\end{proposition}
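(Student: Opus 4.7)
The plan is to reduce smoothness of the pullback map, which takes values in an infinite-dimensional section space, to smoothness of the induced fibrewise action on the total space of $E$, by invoking the exponential law of convenient calculus together with the characterization of smoothness of a map into $\GaE$ via its ``two-variable'' associated map $M\to E$ (Lemma \ref{kriegl} and \cite[27.17]{KM}).

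For (1)(i), completeness of $X$ makes $\FlX$ a globally defined smooth flow on $\R\times M$, and by hypothesis on $F$ the fibrewise action $(\tau,v)\mapsto F(\FlXmt)v$ is smooth from $\R\times E$ to $E$. From the identity $((\FlXt)^*\eta)(p)=F(\FlXmt)(\eta(\FlXt p))$, the map
\[
\Psi:\R\times\GaE\times M\to E,\qquad (\tau,\eta,p)\mapsto F(\FlXmt)\bigl(\eta(\FlXt p)\bigr),
\]
is smooth as the composition of the fibrewise flow action, the smooth evaluation $\GaE\times M\to E$, and the smooth map $(\tau,p)\mapsto\FlXt p$. The exponential law then turns smoothness of $\Psi$ into smoothness of $(\tau,\eta)\mapsto(\FlXt)^*\eta$ from $\R\times\GaE$ into $\GaE$. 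Statement (1)(iii) is then immediate: $\tau\mapsto(\FlXt)^*\eta$ is a smooth curve in the (F)-space $\GaE$, so its derivative at $\tau=0$ exists in $\GaE$; continuity of point evaluations on $\GaE$ forces this derivative to coincide with the pointwise Lie derivative $\LX\eta$.

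For (1)(ii), I would test smoothness by composing with an arbitrary smooth curve $t\mapsto(\tau(t),\om(t))$ into $\R\times\GacE$. Over any compact interval $J\subseteq\R$, $\{\om(t):t\in J\}$ is bounded in the (LF)-space $\GacE$, so by the standard description of bounded sets in strict (LF)-spaces there exists a compact $K_J\comp M$ with $\supp\om(t)\subseteq K_J$ for all $t\in J$. Completeness of $X$ then renders $L_J:=\{\FlXmt q:t\in J,\ q\in K_J\}$ compact, and $\supp((\FlX_{\tau(t)})^*\om(t))\subseteq L_J$ for all $t\in J$; the pulled-back curve therefore lies in the Fr\'echet subspace $\Ga_{\mathrm{c},L_J}(E)$. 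Restricting $\Psi$ accordingly and invoking the exponential law with $\Ga_{\mathrm{c},L_J}(E)$ in place of $\GaE$ yields smoothness of the curve into $\GacE$, hence smoothness of the joint map.

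For part (2), the hypothesis supplies a uniform $\tau_0$ valid for all $\om\in\Ga_{\mathrm{c},K}(E)$. Since $K$ is compact and the local flow is smooth in $(\tau,p)$, there exists a compact $L\subseteq M$ with $\FlXmt(K)\subseteq L$ for every $|\tau|\leq\tau_0$, so $(\FlXt)^*\om$ is supported in $L$ for all $|\tau|<\tau_0$ and $\om\in\Ga_{\mathrm{c},K}(E)$. The map $\Psi$, now regarded on $(-\tau_0,\tau_0)\times\Ga_{\mathrm{c},K}(E)\times M$, is smooth as before, and the exponential law produces smoothness of $(\tau,\om)\mapsto(\FlXt)^*\om$ into the Fr\'echet space $\Ga_{\mathrm{c},L}(E)\subseteq\GacE$; statement (2)(ii) then follows from (2)(i) by the same continuous-point-evaluation argument as in (1)(iii). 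The main obstacle I expect is (1)(ii): controlling supports along arbitrary smooth curves in the (LF)-space $\GacE$ requires not just fibrewise smoothness but the non-trivial support-containment step above, forcing one to pass through the intermediate Fr\'echet subspaces $\Ga_{\mathrm{c},L_J}(E)$ before concluding smoothness into $\GacE$.
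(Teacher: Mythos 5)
Your proof is correct and follows essentially the same strategy as the paper: smoothness of the fibrewise map $(\tau,\eta,p)\mapsto F(\FlXmt)\eta(\FlXt p)$ plus the exponential law (Lemma~\ref{kriegl}/Corollary~\ref{expCF}), with compactness of the flow used to keep supports inside a fixed step $\Ga_{\mathrm{c},L}(E)$ of the (LF)-space. The only cosmetic difference is in (1)(ii): you test against arbitrary smooth curves and control supports segment by segment, whereas the paper first shows smoothness into $\GaE$ and then upgrades the target to $\GacE$ via the containment $\phi^\vee([-\tau_0,+\tau_0]\times\Ga_{\mathrm{c},K}(E))\subseteq\Ga_{\mathrm{c},L}(E)$ and a mild generalization of \cite[Th.\ 2.2.1]{book} (or, alternatively, linearity in $\om$); both routes rest on the same ingredients.
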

\begin{proof}
(1)\ $(\tau,\eta,p)\mapsto\eta(\FlXt p)=\ev(\eta,\FlXt p)$ is smooth, due to
smoothness of $\FlX$ and of $\ev:\GaE\times M\to E$ (the latter
follows from the definition of the (C)-topology on spaces of smooth sections,
cf.\ \cite[30.1]{KM}). By our assumptions on $F$, also $\phi:(\tau,\eta,p)\mapsto
F(\FlXmt)\eta(\FlXt p)=((\FlXt)^*\eta)(p)$ is smooth; $\phi$ can be viewed as a
section of the vector bundle $\mathrm{pr}_3^*\GaE$ over $\R\times\GaE\times M$.
Applying Corollary \ref{expCF} we obtain that
$\phi^\vee$ is a smooth map from $\R\times\GaE$ into $\GaE$, which is (i). (iii)
now follows immediately by fixing $\eta$. In order to establish (ii), replace
$\eta\in\GaE$ by $\om\in\GacE$ in the proof of (i), yielding smoothness of
$(\tau,\om)\mapsto(\FlXt)^*\om$ into $\GaE$. Since $\FlX$ maps each set of the
form $[-\tau_0,+\tau_0]\times K$ (with $K\comp M$) onto some compact subset $L$
of $M$ we see that $\phi^\vee([-\tau_0,+\tau_0]\times\Ga_{\mathrm{c},K}(E))$ is
contained in $\Ga_{\mathrm{c},L}(E)$.
A slight generalization of \cite[Th.\ 2.2.1]{book} establishes
smoothness of $\phi^\vee$ as a
map from $\R\times\GacE$ into $\GacE$. (An alternative argument completing the
proof of (ii) exploits linearity in $\om$ by passing to $\phi^{\vee\vee}:
\Ga_{\mathrm{c},K}(E)\to\Cinf((-\tau_0,+\tau_0),\GacE)$ via the exponential
law from \cite[27.17]{KM}).

(2)\ The proof of (i) is similar to the proof of (ii) of part (1): Just
replace $\R$ by
$(-\tau_0,+\tau_0)$ and $\GacE$ by  $\Ga_{\mathrm{c},K}(E)$ in the domain of the
respective maps (note that $\tau_0$ depends on $K$, forcing us to restrict
statements (i) and (ii) to the subspace $\Ga_{\mathrm{c},K}(E)$ of $\GacE$).
(ii) again
follows from (i).
\end{proof}

As the respective proofs show, ``local'' variants of (1)(i) and (1)(iii) of the
preceding result hold for arbitrary vector fields $X$, in the following
sense: Denoting by $U_\tau$ the (open) set $\{p\in M\mid
\FlXt(p)\text{ and }\FlXmt(p)\text{ are defined}\}$, the map
$(\tau,\eta)\mapsto(\FlXt)^*\eta$ is smooth from
$(-\tau_0,+\tau_0)\times\GaE$ into $\Ga(E|_{U_{\tau_0}})$ and
$\LX\eta$ exists as the respective limit in $\Ga(E|_{U_{\tau_0}})$, both for
$\tau_0$ small enough as to make $U_{\tau_0}$ nonempty. However,
there is no local variant of (1)(ii) as a map from, say,
$(-\tau_0,+\tau_0)\times\GacE$ into $\Ga_\mathrm{c}(E|_{U_{\tau_0}})$: For
$(\FlXt)^*\om$
to have compact support in $U_{\tau_0}$ for all $|\tau|<\tau_0$ we would have to
assume $\supp\om\subseteq\bigcap_{|\tau|<\tau_0}\FlXt(U_{\tau_0})$
which excludes $(-\tau_0,+\tau_0)\times\GacE$ as domain of the map envisaged
above.

We will express statements (1)(iii) and (2)(ii) of Proposition \ref{lierefinement}
by saying that $\LX\eta$ exists in the (F)-sense resp.\ that $\LX\om$ exists in
the (LF)-sense.

Turning now to the definition of the Lie derivative for transport
operators and
two-point tensors
we have to extend the setting of \cite{michorbook}, capable of
handling only  single argument functors $F$ as outlined above, to
bifunctors $G$ (such as TP and TO), assigning a vector bundle $G(M,N)$ over
$M\times N$ to each pair $M,N$ of manifolds and a vector bundle isomorphism
$G(\mu,\nu):G(M_1,N_1)\to G(M_2,N_2)$ over $\mu\times\nu$ to every pair of
local diffeomorphisms $\mu:M_1\to M_2$, $\nu:N_1\to N_2$.
{\it Mutatis mutandis}, all statements of Remark \ref{lierough} and Proposition
\ref{lierefinement} remain valid for bifunctors of that type.
Thus,
let $X\in \mathfrak{X}(M)$, $Y \in
\mathfrak{X}(N)$ be complete vector fields with flows $\FlX$ and $\Fl^Y$,
respectively.
We then define the Lie derivative by
differentiating at $0$ the pullback (under the flow of
$(X,Y)$) of any given transport operator $A\in \GaTOMN$:
\begin{equation}\label{tolie}
\LXY A (p,q) := \ddttz (\mathrm{Fl}^X_\tau , \mathrm{Fl}^Y_\tau)^*A
(p,q)\,.
\end{equation}
Analogously, for $\Upsilon = f \eta \otimes \xi$ a generic element of
$\GaTPMN$ we set
\begin{equation}\label{tplie}
\begin{array}{rcl}
\LXY\Upsilon(p,q) &=& \ddtz (\mathrm{Fl}^X_\tau ,
\mathrm{Fl}^Y_\tau)^*\Upsilon (p,q)  \\[5pt]
&=&  (\LXz f + \LzY f)(p,q)\, \eta(p)\otimes \xi(q)\\[5pt]
&& + f(p,q) (\LX\eta(p) \otimes \xi(q) + \eta(p)\otimes \LY\xi(q)).
\end{array}
\end{equation}

From (\ref{bulletcommute}), we obtain for any $\Upsilon \in \GaTPMN$:
\begin{equation}\label{bulletliecommute}
\LXY(\Upsbl) = (\LXY\Upsilon)_\bullet\,.
\end{equation}

\label{ro:muxnu2}
Resuming the discussion of the pullback action of {\it pairs} of
diffeomorphism started after (\ref{bulletcommute}) we see that also in order
to implement a geometric approach to Lie derivatives via pullback action
of flows, definitions (\ref{tolie}) and (\ref{tplie})
had to be based on pairs of flows $(\FlXt,\FlYt)$
on $M$ resp.\ $N$ rather than on the flow of some single vector field $Z$
on $M\times N$.
This is emphasized by our notation $\Lie_{X,Y}$
rather than $\Lie_{X\times Y}$, reflecting the fact that it is precisely the
vector
fields of the form $Z=(X,Y)$ from the subspace $\XM\oplus\XN$ of
$\mathfrak{X}(M\times N)$ that induce a pullback action and a Lie
derivative on
(sections of) the bundle functors TO and TP. In this sense, our concept of Lie
derivative is, in fact, a proper extension resp.\ refinement of the usual
setting as presented, e.g.,
in \cite{michorbook} where only single-argument (vector bundle valued)
functors are considered. 

\appsection{Auxiliary results from calculus in convenient
vector spaces}\label{appendixB}

The notion of a smooth curve $c:\R\to E$ where $E$ is some locally convex space
is unambiguous. The space $\Cinf(\R,E)$ of smooth curves in $E$ will
always carry the topology of uniform convergence on compact intervals
in all derivatives separately. For locally convex spaces $E$, $F$, a map $f:E\to
F$ is defined to be smooth if $c\mapsto f\circ c$ takes smooth curves in $E$ to
smooth curves in $F$. This notion of smoothness depends only on the
respective families of bounded sets, i.e., if the topologies of $E$
and $F$ are changed in such a way that in each space the family of
bounded subsets is preserved then the space $\Cinf(E,F)$ of smooth
mappings from $E$ to $F$ remains the
same (\cite[1.8]{KM}). As a rule, we endow $\Cinf(E,F)$ with the
``(C)-topology'' (``C'' standing for ``curve''
resp.\ ``$\Cinf$'' resp.\ ``convenient''), defined as the initial (locally
convex) topology with respect to the family of all mappings
$c^*:\Cinf(E,F)\to\Cinf(\R,F)$, for $c\in\Cinf(\R,E)$ (\cite[3.11]{KM}).
The evaluation map $\ev:\Cinf(E,F)\times E\to F$ sending $(f,x)$ to $f(x)$ is
smooth by \cite[3.13 (1)]{KM}. Consequently, $\ev_x:\Cinf(E,F)\ni
f\mapsto f(x)\in F$ is (linear and) smooth (equivalently, bounded, due to
\cite[2.11]{KM}) for every fixed $x\in E$.

In the above, $E$ can be replaced by some open (even $c^\infty$-open, cf.\
\cite[2.12]{KM}) subset $U$ of $E$ resp.\ by some smooth
and smoothly Hausdorff (cf.\ \cite[p.\ 265]{KM}) manifold $M$
modelled over convenient vector spaces (\cite[27.17]{KM}; as to the evaluation
map, see the proof of Lemma \ref{kriegl}).

There are many equivalent ways how to define a convenient vector space, cf.\
\cite[2.14, Th.]{KM}. We will use condition (6) of this theorem saying that a
locally convex space $E$ is convenient if for each bounded absolutely convex
closed
subset $B$, the normed space $(E_B,p_B$) is complete. Every sequentially
complete locally convex vector space is convenient (\cite[2.2]{KM}); all the
spaces considered in this article are sequentially complete.

Note that, in general, smooth maps are not necessarily continuous. If, however,
$E$ is metrizable then any $f\in\Cinf(E,F)$ is
continuous, due to \cite[4.11, 2.12, and p.\ 8]{KM}.

If $E$ and $F$ have the property that smooth maps $f:E\to F$ map compact sets
to bounded sets (in particular, if $E$ is metrizable or an (LF)-space), 
there is a second natural locally convex topology on $\Cinf(E,F)$
which will be called (D)-topology: This is the topology of convergence of
differentials (hence ``D'') of all orders $l$ (separately), uniformly on
sets of the form $K\times B^l$ where $K$ is a compact and $B$ a bounded
subset of $E$. By the chain rule (\cite[3.18]{KM}),
the (D)-topology is finer
than the (C)-topology. In fact, even on $\Cinf(\R^2,\R)$, an inspection of the
form of the respective typical neighborhoods of $0$ reveals (D) to be
strictly finer than (C). However, Theorem \ref{KMeqF} below shows that (D) and
(C) have
the same bounded sets if $E$ is an (F)-space or an (LF)-space. Therefore, the
notion of
smoothness on $\Cinf(E,F)$ with respect to both topologies is the same in
that case. On $\Cinf(\R,E)$, the (D)-topology coincides with
the usual (F)-topology.

For the proof of the theorem below, note the following: Calling a sequence
$x_n\to x$ in a topological vector space fast converging if for each
$l\in\N$, the sequence $n^l (x_n-x)$ tends to $0$, then every convergent
sequence in a metrizable topological vector space or an (LF)-space
possesses a fast converging
subsequence. To see this, let $(V_k)_k$ be a decreasing neighborhood base of
$0$ consisting of circled sets and choose $n_k\in \N$ monotonically increasing
such that $(x_{n_k}-x)\in k^{-k}V_k$.

\begin{theorem}\label{KMeqF}
Let $E$ be an (F)-space or an (LF)-space and $F$ be an arbitrary
locally convex space. Then
every (C)-bounded subset of $\Cinf(E,F)$ is (D)-bounded, hence (C) and (D) have
the same bounded sets.
\end{theorem}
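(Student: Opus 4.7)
My plan is a proof by contradiction, using polarization to reduce to a diagonal problem and then the Kriegl--Michor special curve lemma to convert the resulting bad sequence into a (C)-testable curve. Assume $\mathcal{H}\subseteq\Cinf(E,F)$ is (C)-bounded but not (D)-bounded. Then there exist a compact $K\subseteq E$, a bounded $B\subseteq E$, an order $l\in\N_0$, and a continuous seminorm $q$ on $F$ witnessing the failure of (D)-boundedness. Since $d^lf(x)$ is symmetric $l$-linear, polarization lets me replace the general multilinear estimate by its diagonal: it suffices to show that $\{d^lf(x)(v,\dots,v):f\in\mathcal{H},\,x\in K,\,v\in B'\}$ is $q$-bounded for a suitable bounded $B'\supseteq B$. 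Assume not, and pick sequences $f_n\in\mathcal{H}$, $x_n\in K$, $v_n\in B'$ such that $q(d^lf_n(x_n)(v_n,\dots,v_n))$ grows faster than any polynomial in $n$.

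The first step is to extract a fast convergent subsequence of base points. In the (F)-case, $K$ is metrizable, so, as recalled just before the theorem, a subsequence has $x_n\to x_\infty$ fast. In the (LF)-case, a standard argument places the compact set $K$ inside some step $E_N$ of the inductive limit, reducing us to the (F)-case. Next I would invoke the special curve lemma of \cite{KM}: applied in the convenient vector space $E$, the fast convergence of $x_n$ together with the boundedness of $\{v_n\}$ produces a smooth curve $c:\R\to E$ and a sequence $t_n\to 0$ with $c(t_n)=x_n$, $c'(t_n)=v_n$, and with all higher derivatives $c^{(k)}(t_n)$ forming bounded sequences in $E$.

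Fa\`a di Bruno's formula now expresses $(f_n\circ c)^{(l)}(t_n)$ as a finite linear combination of terms $d^kf_n(x_n)(c^{(p_1)}(t_n),\dots,c^{(p_k)}(t_n))$ with $p_1+\dots+p_k=l$. The unique top-order contribution $(k=l,\,p_j=1)$ is precisely $d^lf_n(x_n)(v_n,\dots,v_n)$, while every other term involves $d^kf_n$ with $k<l$ evaluated on bounded vectors. An outer induction on $l$ (the base case $l=0$ being immediate, since a (C)-bounded $\mathcal{H}$ is pointwise bounded by testing on constant curves) lets me absorb those lower-order contributions: by the inductive hypothesis $\mathcal{H}$ is (D)-bounded up to order $l-1$, so all non-leading terms in the Fa\`a di Bruno expansion are $q$-bounded uniformly in $n$. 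Therefore $q((f_n\circ c)^{(l)}(t_n))$ inherits the super-polynomial growth of $q(d^lf_n(x_n)(v_n,\dots,v_n))$. This contradicts (C)-boundedness of $\mathcal{H}$ applied to the fixed smooth curve $c$, which forces $\{(f_n\circ c)^{(l)}\}_n$ to be bounded on the compact set $\{t_n\}\cup\{0\}$.

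The main obstacle is the clean invocation of the special curve lemma to produce a single smooth $c$ with the two-fold prescription $c(t_n)=x_n$ \emph{and} $c'(t_n)=v_n$, all higher-order derivatives at $t_n$ remaining bounded. This is where fast base-point convergence, boundedness of velocities, and convenience of $E$ interlock; everything else in the argument is bookkeeping (polarization plus Fa\`a di Bruno) and a straightforward induction on the derivative order $l$.
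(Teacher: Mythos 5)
Your overall architecture — contradiction, polarization to get equal vector arguments, fast convergence of the base points, construction of a special smooth curve through that data, and testing (C)-boundedness against that curve — is exactly the paper's. But the curve step, which you yourself flag as ``the main obstacle,'' is in fact a genuine gap as you have stated it, and the paper resolves it differently than you envisage.

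The special curve lemma in \cite{KM} cannot produce a smooth curve $c$ with $c(t_n)=x_n$ and $c'(t_n)=v_n$ for $t_n\to 0$ when the $v_n$ are merely \emph{bounded}: since $c'$ is continuous, $c'(t_n)\to c'(0)$, so the $v_n$ would have to converge, and fast convergence (of both data) is what the curve lemma actually demands. The paper's move is to rescale: set $v_k:=k^{-k}w_k$, so that $v_k\to 0$ fast, and then invoke \cite[2.10]{KM} to obtain a \emph{smoothly parametrized polygon}, i.e.\ a smooth $c$ that is genuinely affine, $c(\tau_k+\tau)=p_k+\tau v_k$, on a small interval around each $\tau_k$. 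The rescaling cost is later repaid using the $l$-homogeneity of $\diff^l f_k(p_k)(\cdot,\dots,\cdot)$ against the normalization $k^{-2lk}$ built into the choice of bad sequence. Your proposal never introduces this rescaling, so the curve you need does not exist.

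The second point is more of a comparison: because the paper's polygon is locally affine, all higher derivatives $c^{(j)}(\tau_k)$ with $j\ge 2$ vanish, so $(f_k\circ c)^{(l)}(\tau_k)=\diff^l f_k(p_k)(v_k,\dots,v_k)$ \emph{exactly}. There is no Fa\`a di Bruno tail and hence no need for an outer induction on $l$ to control lower-order contributions. Your Fa\`a di Bruno plus induction scheme is not wrong in principle if one had a curve with all the properties you ask of it, but it is both more elaborate and built on a curve that is harder (indeed, with bounded $v_n$, impossible) to construct. If you rescale the velocities as in the paper, the locally affine polygon makes the entire Fa\`a di Bruno/induction machinery unnecessary.
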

\begin{proof}
Let $B$ be bounded with respect to the (C)-topology, i.e.,
\begin{eqnarray*}
\lefteqn{\forall c\in\Cinf(\R,E)\ \forall K_0\comp\R\ \forall l\in\N_0:}\\
&&\hphantom{mmmmmmmmmm}\{(f\circ c)^{(l)}(\tau)\mid\tau\in K_0,\ f\in B\}
\textrm{ is bounded in }F.
\end{eqnarray*}
Assume, by way of contradiction, $B$ not to be bounded with respect to (D), i.e.
\begin{eqnarray*}\label{notbD}
\lefteqn{\exists K\comp E\ \exists D\,\text{(bounded}\subseteq
E)\ \exists l\in\N_0:}\\
&&\hphantom{mm}\{\diff^l f(p)(w,\dots,w)\mid
f\in B,\ p\in K,\ w\in D\}
\textrm{ is unbounded in }F.
\end{eqnarray*}
Here, we have already used polarization as, e.g., in \cite[7.13 (1)]{KM}, to
obtain equal vector arguments $(w,\dots,w)$. Fix $K,D,l$ as above.
By the preceding, we can choose
sequences
$f_k\in B$, $p_k\in K$, $w_k\in D$ such that
\begin{eqnarray}\label{nontozero}
\frac{1}{k^{2lk}} \,\diff^l f_k(p_k)(w_k,\dots,w_k)\not\to 0\quad(k\to\infty).
\end{eqnarray}
By passing to suitable subsequences we can assume that there is $p\in K$ such
that $p_k\to p$ fast as $k\to\infty$.
(Note that if $E$ is an (LF)-space, we are working within one fixed
Fr\'echet subspace of $E$, due to $K$ and $D$ being bounded.)
Setting
$v_k:=
k^{-k}w_k$ we obtain $v_k\to0$ fast, due to $D$
being bounded. Now, by \cite[2.10]{KM}, there are a smoothly parametrized polygon
$c:\R\to E$ and numbers $\tau_k\to0$ in $\R$ such that $c(\tau_k+\tau)=p_k+\tau
v_k$ for $\tau\in(-\de_k,+\de_k)$, for suitable $\de_k\in (0,1)$ .

Recalling that $B$ is (C)-bounded, choose a compact interval
$K_0\comp\R$ containing all intervals $(\tau_k-\de_k,\tau_k+\de_k)$ and take $l$
and $c$
as above. Then we conclude that $\{(f\circ c)^{(l)}(\tau)\mid\tau\in K_0,\ f\in
B\}$ is bounded in $F$. Defining $y_k:= (f_k\circ c)^{(l)}(\tau_k)=
\diff^lf_k(p_k)(v_k,\dots,v_k)$, we therefore have $k^{-lk}y_k\to 0$.
Consequently,
\begin{eqnarray*}
\frac{1}{k^{2lk}} \,\diff^l f_k(p_k)(w_k,\dots,w_k)
   &=&  \frac{1}{k^{lk}}\,y_k\to 0,
\end{eqnarray*}
contradicting (\ref{nontozero}).
\end{proof}
Note that the preceding theorem remains valid for $\Cinf(U,F)$ where $U$ is an
open subset of the (F)- resp.\ (LF)-space $E$: The relevant part of the polygon
$c$ constructed in the proof can be assumed to be contained in some given
absolutely convex neighborhood of $p$.

In order to carry over Theorem \ref{KMeqF} to spaces $\GaME$ of smooth
sections
of vector bundles $E\stackrel{\pi}{\to}M$, we need a suitable notion of
(C)-topology on section spaces $\GaME$, $E$ having some convenient vector space
$Z$
as typical fiber. It certainly would be tempting to proceed as
follows (\cite[p.\ 294]{KM}):
For any local trivialization $(U,\Psi)$ on $E\stackrel{\pi}{\to}M$
($\Psi:\pi^{-1}U\to U\times Z$) and $u\in\GaME$, let
$\ti\Psi(u)\in\Cinf(U,Z)$ be given by
$$
\ti\Psi(u):=\prt\circ\Psi\circ(u|_U).
$$
Now we could define the (C)-topology on $\GaME$ as the initial topology with
respect
to the family of all maps $\{\ti\Psi_\al\mid\al\in A\}:\GaME\to\Cinf(U_\al,Z)$
where
$\{(U_\al,\Psi_\al)\mid\al\in A\}$ is any vector bundle atlas\footnote{The term
vector bundle atlas denotes a compatible family $\{(U_\al,\Psi_\al)\mid\al\in
A\}$ of local trivializations on $E\stackrel{\pi}{\to}M$
 with $\bigcup_\al
U_\al=M$.} for
$E\stackrel{\pi}{\to}M$.
Here, $\Cinf(U_\al,Z)$, in turn, carries the (C)-topology in the sense of
\cite[27.17]{KM}. This construction, however, {\it does} depend on the atlas
used: Already for the trivial bundle $\R\times Z$ with atlas
$\{\id_{\R\times Z}\}$, the (C)-topology on $\Ga(\R,\R\times Z)$ would
become strictly finer by adjoining the local trivialization $\id_\R\times \phi$
where $\phi$ is a discontinuous bornological isomorphism of $Z$. It is not hard
to see that the (C)-topology, as defined above, does not depend on the atlas if
$Z$ is barreled and bornological. Thus, we could either accept this additional
condition or define the (C)-topology via the maximal vector bundle atlas. For
the present purposes, however, only the family of (C)-bounded subsets is
relevant which, fortunately, is independent of the atlas:
It suffices to note that for $w\in\Cinf(\Ualb,Z)$, the ``change of vector bundle
chart'' $w\mapsto \ev\circ(\psi_{\alb}\times w)\circ\De$ (where
$\De:U_\al\cap U_\bet=:\Ualb\to \Ualb\times \Ualb$ denotes the
diagonal map and $\psi_{\alb}:\Ualb\to\mathrm{GL}(Z)$ the transition
functions) is linear and smooth by \cite[3.13 (1)(6)(7)]{KM}, hence bounded
(this substantiates and extends the respective remarks preceding the proposition in 
\cite[p.\ 294]{KM}).

\begin{corollary}\label{sectionKMeqF}
Let $E\stackrel{\pi}{\to}M$ be a vector bundle over $M$ (with $\dim M$ and $\dim E$ finite). 
Then a subset of
$\GaME$ is (C)-bounded if and only if it is (F)-bounded.
\end{corollary}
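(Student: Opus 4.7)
The plan is to reduce the statement to Theorem~\ref{KMeqF} applied locally, chart by chart, and then to match the resulting boundedness condition against the seminorms \eqref{seminormssection} defining the (F)-topology on $\GaME$. Fix a vector bundle atlas $\{(U_\al,\Psi_\al)\mid\al\in A\}$ for $E\to M$ with typical fiber $Z=\R^{\dim E-\dim M}$, and recall from the discussion preceding the corollary that a subset $B\subseteq\GaME$ is (C)-bounded if and only if each image $\tit\Psi_\al(B)\subseteq\Cinf(U_\al,Z)$ is (C)-bounded, with (C)-boundedness on the right-hand side being independent of the particular atlas.

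First I would note that $U_\al$ can be identified via $\psi_\al$ with an open subset of $\R^n$, hence carries the structure of an (F)-space (restricted to the appropriate open set, the results of Theorem~\ref{KMeqF} apply, as remarked immediately after its proof). Therefore Theorem~\ref{KMeqF} gives that $\tit\Psi_\al(B)$ is (C)-bounded in $\Cinf(U_\al,Z)$ if and only if it is (D)-bounded. Unravelling the definition of (D)-boundedness: for every compact $L\subseteq U_\al$, every bounded $D\subseteq\R^n$ and every $l\in\N_0$ the set
\[\{\diff^lf(x)(w,\dots,w)\mid f\in\tit\Psi_\al(B),\ x\in L,\ w\in D\}\]
is bounded in $Z$. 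Since $Z$ is finite-dimensional and $\diff^lf(x)$ is $l$-multilinear, by polarization and the finite-dimensionality of $\R^n$ this is equivalent to
\[\sup\bigl\{\|\pa^\nu f(x)\|\mid f\in\tit\Psi_\al(B),\ x\in L,\ |\nu|\leq l\bigr\}<\infty\]
for every compact $L$ and every $l\in\N_0$.

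Next I would observe that this last condition is precisely boundedness of $\tit\Psi_\al(B)$ in the usual Fr\'echet topology on $\Cinf(U_\al,Z)$, and, ranging $(U_\al,\Psi_\al)$ over an atlas subordinate to a chart cover of $M$, this matches exactly the family of seminorms \eqref{seminormssection} that define the (F)-topology on $\GaME$. Hence (C)-boundedness of $B$ in $\GaME$ is equivalent to (F)-boundedness of $B$ in $\GaME$. The converse direction (F)-bounded $\Rightarrow$ (C)-bounded would in any case follow from the fact that (C) is weaker than (F), but it also drops out of the equivalence just established.

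The main obstacle is purely administrative rather than conceptual: one must carefully verify that (D)-boundedness in $\Cinf(U_\al,Z)$, which is phrased using arbitrary bounded subsets $D\subseteq\R^n$, collapses via multilinearity and finite-dimensionality to plain uniform boundedness of partial derivatives on compacta, so that it synchronizes with \eqref{seminormssection}. Once this translation is in place, the corollary is simply Theorem~\ref{KMeqF} assembled over the atlas.
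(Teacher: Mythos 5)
Your proof is correct and follows essentially the same route as the paper: reduce to local chart representations via the vector bundle atlas, apply Theorem~\ref{KMeqF} to pass from (C)-boundedness to (D)-boundedness on each $\Cinf(\psi_\al(U_\al),Z)$, then identify (D)-boundedness with the seminorm conditions of (\ref{seminormssection}). You spell out the polarization/finite-dimensionality step a little more explicitly than the paper does, but the structure of the argument is the same.
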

\begin{proof}
We may assume that every local trivialization $(U_\al,\Psi_\al)$ as above is
defined over some chart $(U_\al,\psi_\al)$ of $M$.
Let $B$ denote a subset of $\GaME$. Then the following statements are
equivalent ($\al\in A$ as above):
\begin{enumerate}
\item[(i)] $B$ is (C)-bounded.
\item[(ii)] $\ti\Psi_\al(B)$ is (C)-bounded in $\Cinf(U_\al,Z)$, for every
$\al$.
\item[(iii)] $(\psi_\al^{-1})^*\ti\Psi_\al(B)$ is (C)-bounded in
   $\Cinf(\psi_\al(U_\al),Z)$, for every $\al$.
\item[(iv)] $(\psi_\al^{-1})^*\ti\Psi_\al(B)$ is (D)-bounded in
   $\Cinf(\psi_\al(U_\al),Z)$, for every $\al$.
\item[(v)] All seminorms generating the (F)-topology on $\GaME$
(cf.\ (\ref{seminormssection})) are bounded on $B$.
\item[(vi)] $B$ is (F)-bounded.
\end{enumerate}
The equivalences of (i)--(iv) are immediate from the definition of the
(C)-topologies on $\GaME$, resp.\ on $\Cinf(U_\al,Z)$, resp.\ from Theorem
\ref{KMeqF}, in turn (note that $\psi_\al(U_\al)$ is an open subset of the
(F)-space $\R^{\dim M}$). Taking into account
$((\psi_\al^{-1})^*\circ\ti\Psi_\al)(u)=\prt\circ\Psi_\al\circ(u|_{U_\al}
)\circ\psi_\al^{-1}
=(\psi^j_\al\circ(u|_{U_\al})\circ\psi_\al^{-1})_{j=1}^{\dim E}$,
(\ref{seminormssection}) yields (iv)$\Leftrightarrow$(v).
(v)$\Leftrightarrow$(vi), finally, holds by definition.
\end{proof}

\begin{remark}\label{CDF}
Corollary \ref{sectionKMeqF} shows, in particular, that smoothness of
members of (2) resp.\ (3) in Lemma \ref{reprbasic} is not affected by
replacing the (C)- by the (F)-topology on $\cTrsM$ resp.\ on $\CinfM$.
Due to Theorem \ref{KMeqF}, a similar statement is true for the (C)- and the
(D)-topologies on $\Cinf(\cTsrM,\CinfM)$ resp.\ on $\Cinf(\cTsrM,\R)$ in (3)
resp.\ (5) of Lemma \ref{reprbasic}, provided
that $M$ is separable, hence $\cTsrM$ is an (F)-space.
\end{remark}

In the proof of the following theorem, two applications of the uniform
boundedness principle as formulated in condition (2) of \cite[5.22]{KM}
will occur. One of the assumptions of this principle is that for the set
$B(\subseteq E)$ to be demonstrated as being bounded, the normed subspace
$E_B=\bigcup_n nB$ of $E$ (cf.\ \cite[p.\ 63]{Schaefer}) has to be a
Banach space with respect to the Minkowski functional $p_B$ of $B$. Lemma
\ref{banach} below makes sure that this condition is available when needed
below.
\begin{lemma}\label{banach}
Let $E$, $F$ be vector spaces, $F_\la$ (with $\la$ from some index set) locally
convex vector spaces,
$f:E\to F$ linear and $g_\la:F\to F_\la$
a family of linear maps
which is assumed to
be point separating on $F$. If $B$ is an absolutely
convex subset of $E$ such that $E_B$ is a Banach space, and for every $\la$
the set $(g_\la\circ f)(B)$ is bounded in $F_\la$
then also $F_{f(B)}$ is a Banach space.
\end{lemma}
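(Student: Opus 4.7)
The plan is to exhibit $F_{f(B)}$ as a quotient of the Banach space $E_B$ by a closed subspace. As a first step I would verify that $f(E_B) = F_{f(B)}$ and that the Minkowski functional $p_{f(B)}$ coincides with the quotient seminorm induced by $p_B$ via $f|_{E_B}$. Indeed, $p_{f(B)}(y) = \inf\{t > 0 : y \in tf(B)\}$: if $y \in tf(B)$ then $y = f(b)$ with $b \in tB$, so $p_B(b) \leq t$; conversely any $x' \in E_B$ with $p_B(x') < t$ lies in $tB$ (since $B$ is absolutely convex and $sB \subseteq tB$ for $0 < s < t$), hence $f(x') \in tf(B)$. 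Thus $p_{f(B)}(y) = \inf\{p_B(x') : x' \in E_B,\ f(x') = y\}$, which is exactly the quotient seminorm on $E_B/K$, where $K := \ker(f|_{E_B})$.

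The main step is to show that $K$ is closed in $(E_B, p_B)$. Here I would exploit the family $\{g_\la\}$: by hypothesis $(g_\la \circ f)(B)$ is bounded in $F_\la$, so the linear map $g_\la \circ f|_{E_B} : E_B \to F_\la$ sends the closed unit ball $B$ of $E_B$ to a bounded subset of $F_\la$. Any linear map from a normed space to a locally convex space which is bounded on the unit ball is automatically continuous: given a neighborhood $V$ of $0$ in $F_\la$, pick $\eps > 0$ with $\eps(g_\la \circ f)(B) \subseteq V$; then the $\eps$-ball of $E_B$ maps into $V$. Hence each $\ker(g_\la \circ f|_{E_B})$ is closed in $E_B$. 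Because $\{g_\la\}$ separates points on $F$, we have $K = \bigcap_\la \ker(g_\la \circ f|_{E_B})$, which is therefore closed.

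Since $E_B$ is a Banach space and $K$ is a closed subspace, the quotient $E_B/K$ is a Banach space under the quotient norm. By the identification from the first step, this quotient is linearly isometric to $(F_{f(B)}, p_{f(B)})$, so $F_{f(B)}$ is itself a Banach space (in particular, $p_{f(B)}$ turns out to be a genuine norm rather than merely a seminorm). The only non-routine ingredient is the closedness of $K$, which rests entirely on the separating family $\{g_\la\}$ together with the boundedness hypothesis; the rest is a straightforward bookkeeping exercise with Minkowski functionals and quotient norms.
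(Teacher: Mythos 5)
Your proof is correct and takes essentially the same route as the paper's: both identify $p_{f(B)}$ as the quotient seminorm of $p_B$ on $E_B/\ker(f|_{E_B})$ and then use the separating family together with boundedness of $(g_\lambda\circ f)(B)$ to ensure the quotient is a Banach space. The only cosmetic difference is that you cast the key step as closedness of $\ker(f|_{E_B})$ (via continuity of each $g_\lambda\circ f|_{E_B}$), while the paper argues directly that $p_{f(B)}$ is a genuine norm — the two formulations are equivalent.
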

\begin{proof}
By standard methods, it follows that the Minkowski functional
$p_{f(B)}$
of $f(B)$ is the quotient semi-norm of
$p_B$
on the space $F_{f(B)}\cong E_B/\ker (f|_{E_B})$.
Since $(g_\la\circ f)(B)$ is bounded, $p_{f(B)}$ is even a
norm
(observe that for $p_{f(B)}(x)=0$, $g_\la(x)$ is contained in the
intersection of all neighborhoods of zero in $F_\la$). Thus
$(F_{f(B)},p_{f(B)})$, being a quotient of the Banach space $(E_B,p_B)$, is
a Banach space in its own right.
\end{proof}

One more technical remark is in order: Recall that a convenient vector space is
a locally convex vector space in which for each bounded absolutely convex closed
subset $B$, the normed space $(E_B,p_B$) is complete (\cite[Th.\ 2.14]{KM}). 
Now, if we are given a linear bijection between two convenient vector
spaces then, in order to show that the respective families of bounded sets are
corresponding to each other, i.e., that the given map is a bornological
isomorphism, it is sufficient to show that for every bounded
absolutely convex closed subset $B'$ of one of the spaces such that $E_{B'}$ is
a Banach space, $B'$ is bounded also as a subset of the other space: Indeed, if
$B$ is an arbitrary bounded subset of the first space then for its absolutely
convex closed hull $B'=\overline{\Ga}(B)$, $E_{B'}$ is Banach. By assumption,
$B'$, hence {\it a fortiori} $B$, is bounded when viewed as a subset of the
second space. Recall further that by subscripts ``F'' resp.\ ``C'' we declare
the respective
space as being equipped with the (F)- resp.\ the (C)-topology.
\begin{theorem}\label{actingondual}
For every finite-dimensional manifold $M$, the linear isomorphism $\phi:t\mapsto(\tit\mapsto
t\cdot\tit)$ mapping $\cTrsM$ onto $\Lin^b_\CinfM(\cTsrM_F,\CinfM_C)$
is a bornological isomorphism with respect to the (C)-topologies.
\end{theorem}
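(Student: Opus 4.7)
My plan is to verify that $\phi$ and $\phi^{-1}$ preserve bounded sets. By Lemma \ref{reprbasic} (more precisely, the argument used in its proof to identify (2) with (3)), $\phi$ is already known to be a linear bijection from $\cTrsM$ onto $\Lin_{\CinfM}(\cTsrM,\CinfM)=\Lin^b_{\CinfM}(\cTsrM_F,\CinfM_C)$, so only the bornological statement is at stake. Throughout I will freely use that, thanks to Corollary \ref{sectionKMeqF} and Theorem \ref{KMeqF}, the (F)- and (C)-topologies on $\cTrsM$, $\cTsrM$ and $\CinfM$ possess the same bounded sets.

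For the forward direction (boundedness of $\phi$), the tensor contraction $\mu:\cTrsM\times\cTsrM\to\CinfM$ is $\R$-bilinear and bounded, hence smooth by \cite[5.5]{KM}. The (C)-topology on $\Cinf(\cTsrM_F,\CinfM_C)$ is initial with respect to the pullbacks $c^*:\Cinf(\cTsrM_F,\CinfM_C)\to\Cinf(\R,\CinfM_C)$ along smooth curves $c:\R\to\cTsrM_F$. Fix a bounded $B\subseteq\cTrsM$ and such a curve $c$. Then $(t,\tau)\mapsto\mu(t,c(\tau))$ is smooth on $\cTrsM\times\R$, so by the exponential law \cite[3.12]{KM} the associated linear map $t\mapsto(\tau\mapsto\mu(t,c(\tau)))=c^*(\phi(t))$ is smooth, hence bounded, from $\cTrsM$ into $\Cinf(\R,\CinfM_C)$. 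This yields boundedness of $c^*(\phi(B))$ for every $c$, and thus of $\phi(B)$.

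For the converse, let $B'\subseteq\Lin^b_{\CinfM}(\cTsrM_F,\CinfM_C)$ be (C)-bounded in $\Cinf(\cTsrM_F,\CinfM_C)$. Using that the evaluation map $\Cinf(\cTsrM_F,\CinfM_C)\times\cTsrM\to\CinfM_C$ is smooth by \cite[3.13]{KM}, each point evaluation $\ev_{\tit}$ (for $\tit\in\cTsrM$ fixed) is linear and smooth, hence bounded. Consequently $\{T(\tit):T\in B'\}$ is (C)-bounded in $\CinfM$, and by Corollary \ref{sectionKMeqF} also (F)-bounded. Setting $B:=\phi^{-1}(B')$, this reads: for every $\tit\in\cTsrM$, the subset $\{t\cdot\tit:t\in B\}$ of $\CinfM$ is (F)-bounded.

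The main obstacle is now to convert this $\tit$-wise information into (F)-boundedness of $B$ itself in $\cTrsM$. This will be accomplished by a local frame argument combined with a cutoff. I must bound every seminorm $p_{l,\Psi,L}$ from (\ref{seminormssection}) uniformly on $B$, for arbitrary $l\in\N_0$, vector bundle chart $(V,\Psi)$ over a chart $(V,\psi)$, and $L\comp\psi(V)$. Put $K:=\psi^{-1}(L)\comp V$ and choose on $V$ a local dual frame $e^1,\dots,e^{m}$ of $\cT^s_r(V)$ (with $m=n^{r+s}$) such that the contraction $t\cdot e^j$ recovers the $j$-th coordinate $t^j=\psi^j\circ t$ of $t\in\cTrsM$. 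Select a cutoff $\chi\in\cd(V)$ with $\chi\equiv 1$ on a neighborhood of $K$ and define $\tit_j:=\chi e^j$, extended by zero outside $V$, so that $\tit_j\in\cTsrM$. Then $(t\cdot\tit_j)|_K=t^j|_K$ for all $t\in B$. The previous paragraph furnishes (F)-boundedness of $\{t\cdot\tit_j:t\in B\}$ in $\CinfM$; specializing the defining estimates to the compact set $K$ and to derivatives of order $\le l$ read in the chart $\psi$ yields a uniform bound for $\sup_{x\in L,\,|\nu|\le l}|\pa^\nu(\psi^j\circ t\circ\psi^{-1}(x))|$ over $t\in B$. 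Summing over $j=1,\dots,m$ delivers $\sup_{t\in B}p_{l,\Psi,L}(t)<\infty$, completing the proof.
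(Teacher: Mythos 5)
Your proof is correct and takes a genuinely more elementary route than the paper's. The paper establishes a six-term chain of equivalences (pointwise boundedness of $t(p)$, of $t(p)\cdot\tit(p)$, (C)-boundedness of $\phi(t)(\tit)$, and so on) whose nontrivial implications rest on the uniform boundedness principle from \cite[30.1, 5.18]{KM}; this in turn necessitates Lemma \ref{banach} and a reduction to absolutely convex bounded sets $B$ for which $E_B$ is a Banach space, so that the Banach-disk hypothesis of the principle is available. Your forward direction replaces all of this by a direct verification against the defining family of curve pullbacks $c^*$ of the (C)-topology, using smoothness of the bounded bilinear contraction \cite[5.5]{KM} and the exponential law \cite[3.12]{KM}. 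Your converse replaces the uniform boundedness machinery by a concrete local frame and cutoff construction that feeds the $\tit$-wise (F)-bounds --- obtained trivially from bounded evaluation maps and Corollary \ref{sectionKMeqF} --- directly into the seminorms (\ref{seminormssection}). In effect you reprove, by hand, exactly the instance of the uniform boundedness principle for section spaces that the paper invokes abstractly. Your version is shorter and avoids the Banach technicalities; the paper's is more systematic and reuses the convenient-calculus toolbox uniformly across the several nontrivial implications in its equivalence chain.
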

\begin{proof} That $\phi$ indeed is a linear isomorphism was shown in the
proof of Lemma \ref{reprbasic}. Let $B$ be a subset of $\cTrsM$. The proof will
be achieved by showing (partially under a certain additional assumption on
$B$, cf.\ below) the mutual equivalence of the following
statements, where
always $p\in M$ and $\tit\in\cTsrM$:
\begin{enumerate}
\item[(i)] $B$ is bounded with respect to the (C)-topology of $\cTrsM$.
\item[(ii)] $\{t(p)\mid t \in B\}$ is bounded in
$\TrspM$, for every $p$.
\item[(iii)] $\{t(p)\cdot\tit(p)\mid t \in B\}$ is bounded in $\R$, for every
$p,\tit$.
\item[(iv)] $\{\phi(t)(\tit)(p)\mid t \in B\}$ is
bounded in
$\R$, for every $p,\tit$.
\item[(v)] $\{\phi(t)(\tit)\mid t \in B\}$ is (C)-bounded in $\CinfM$,
for every $\tit$.
\item[(vi)] $\phi(B)$ is bounded w.r.t\ the (C)-topology of
$\Lin^b_\CinfM(\cTsrM_F,\CinfM_C)$.
\end{enumerate}
Identifying (iii)$\Leftrightarrow$(iv) as a mere reformulation and
discerning the chains (i)$\Rightarrow$(ii)$\Leftrightarrow$(iii) and
(iv)$\Leftarrow$(v)$\Leftarrow$(vi) as being obvious (evaluation at
a particular argument always being smooth and linear, hence bounded,
essentially due to \cite[3.13 (1) and 2.11]{KM}), we are left with three
non-trivial implications. Turning to (v)$\Rightarrow$(vi), first of
all note that the (C)-bounded subsets of $\Lin^b_\CinfM(\cTsrM,\CinfM)$
(omitting from now on the subscripts ``$F$'' resp.\ ``$C$'') can
equivalently be viewed as those determined by the structures of
$\Cinf(\cTsrM,\CinfM)$ resp.\ of
$\Lin^b(\cTsrM,\CinfM)$, due to \cite[5.3.\ Lemma]{KM} (where the superscript ``$b$'' is
omitted generally).
Now an appeal to the uniform boundedness principle for spaces of (multi)linear
mappings (\cite[5.18.\ Th.]{KM}) yields (v)$\Rightarrow$(vi).

Whereas the implications established so far are valid for any subset
$B$ of $\cTrsM$, we will have to confine ourselves for
(ii)$\Rightarrow$(i) and (iv)$\Rightarrow$(v) to subsets
$B$ which are absolutely convex and for which $\cTrsM_B$ is a Banach space.
This being a purely algebraic matter, it is equivalent to saying that
$\phi(B)$ is absolutely convex and $\Lin^b_\CinfM(\cTsrM,\CinfM)_{\phi(B)}$ is a
Banach space. Under this additional assumption (in its first form), we obtain
(ii)$\Rightarrow$(i) from a straightforward application of the uniform
boundedness principle for section spaces (\cite[30.1 Prop.]{KM}). 
For a similar argument in favor of (iv)$\Rightarrow$(v) to be legitimate,
however, we need to know that
also $\CinfM_{\phi(B)(\tit)}$ is a Banach space, for every $\tit\in\cTsrM$.
Yet this follows---
assuming (iv) to be true---from $\Lin^b_\CinfM(\cTsrM,\CinfM)_{\phi(B)}$ being
a Banach space by applying Lemma \ref{banach} with
$E$, $F$, $F_\la$, $f$, $g_\la$ replaced by
$\Lin^b_\CinfM(\cTsrM,\CinfM)$, $\CinfM$, $\R$, $\ev_{\tit}$, $\ev_p$,
respectively. Now we are in a position to appeal to \cite[30.1 Prop.]{KM} 
once more, completing the proof of equivalence of
(i)--(vi) for subsets $B$ as specified. Keeping in mind the technical
remark made before the theorem, we conclude that, in fact,
$\phi$ is a bornological isomorphism.
\end{proof}
\begin{remark}
By Corollary \ref{sectionKMeqF}, the bounded sets with respect to the topologies
(C)
and
(F) on $\cTrsM$ are identical. Hence Theorem
\ref{actingondual} could
have been as well formulated for $\cTrsM_C$ replacing $\cTrsM_F$.
\end{remark}

\begin{lemma}\label{fstern}
For a vector bundle $E\stackrel{\pi}{\to}B$, a manifold $B'$ and a smooth map
$f:B'\to B$ ($B$, $B'$ and $E$ finite-dimensional), 
the pullback operator $f^*:\Ga(B,E)\to\Ga(B',f^*E)$
defined by $f^*(u)(p):=(p,u(f(p)))$ is continuous with respect to the
(F)-topologies.
\end{lemma}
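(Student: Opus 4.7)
The plan is to reduce everything to a local coordinate computation and apply the Fa\`a di Bruno formula.

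First, I would exploit the fact that the (F)-topology on $\Ga(B',f^*E)$, generated by the seminorms \eqref{seminormssection}, is independent of the chosen vb-atlas. This allows me to work with the pullback vb-atlas consisting of charts $\varphi\times_B\Psi$ (constructed in \ref{appendixA}) for $(W,\varphi)$ a chart on $B'$ and $(V,\Psi)$ a vb-chart on $E$ over an underlying chart $(V,\psi)$ of $B$. It then suffices to dominate each pullback-chart seminorm $p_{l,\varphi\times_B\Psi,L}\circ f^*$ by a seminorm from the standard system on $\Ga(B,E)$.

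Next, for such a pullback chart and $L\comp\varphi(W\cap f^{-1}(V))$, a direct unwinding of the definition of $\varphi\times_B\Psi$ yields
\[
\psi^j\circ(f^*u|_{W\cap f^{-1}(V)})\circ\varphi^{-1}(x)=u^j(\tilde f(x)),
\]
where $u^j:=\psi^j\circ(u|_V)\circ\psi^{-1}$ are the coordinate components of $u$ and $\tilde f:=\psi\circ f\circ\varphi^{-1}$ is smooth on $\varphi(W\cap f^{-1}(V))$. Setting $L':=\tilde f(L)\comp\psi(V)$, an application of the Fa\`a di Bruno formula expresses each partial derivative $\partial^\nu(u^j\circ\tilde f)$ with $|\nu|\le l$ as a finite polynomial in the derivatives of $\tilde f$ of order at most $l$ (which are bounded on $L$) times derivatives $(\partial^\mu u^j)(\tilde f(x))$ with $|\mu|\le l$. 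This will yield an estimate
\[
p_{l,\varphi\times_B\Psi,L}(f^*u)\le C\cdot p_{l,\Psi,L'}(u)
\]
with $C$ depending only on $\tilde f$, $L$ and $l$, thereby establishing continuity.

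The main technical point requiring some care is the first step---verifying that the pullback vb-atlas indeed generates the full (F)-topology on $\Ga(B',f^*E)$, so that checking only pullback-chart seminorms is enough. This is the standard observation that vb-transitions between any two vb-atlases on a single vector bundle are smooth, so seminorms coming from one atlas are dominated by finite sums of seminorms from the other, after a partition-of-unity argument localizing $\psi'^{-1}(L)$ with respect to the pullback-chart cover. The remaining chain-rule estimate is entirely routine.
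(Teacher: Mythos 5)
Your proof is correct and follows essentially the same route as the paper's: both work with the seminorm system \eqref{seminormssection} via the pullback vector bundle charts $\varphi\times_B\Psi$ from Appendix A, identify the local coordinate expression of $f^*u$ as the composition $u^j\circ\tilde f$ with $\tilde f=\psi\circ f\circ\varphi^{-1}$, and conclude by comparing over $L$ and $L_1=\tilde f(L)$. The paper's one-sentence proof leaves the chain-rule estimate implicit; you spell it out via Fa\`a di Bruno, which is fine. One small remark: the step you flag as needing care (that the pullback vb-atlas suffices) does not really need a partition-of-unity argument -- for a seminorm $p_{l,\Psi',L'}$ coming from an arbitrary vb-chart $\Psi'$ of $f^*E$, one simply covers $\psi'^{-1}(L')$ by finitely many pullback-chart domains, uses that the transitions are smooth with bounded derivatives on the relevant compacta, and takes the maximum of the resulting finitely many estimates; the resulting constant absorbs the transition derivatives.
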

\begin{proof}
This is clear from combining the seminorms (\ref{seminormssection}) with the
typical form of charts on pullback bundles (cf.\ Appendix A): Given $l\in\N_0$,
a chart
$(V,\vphi)$
in $B'$, a vector bundle
chart $(U,\Psi)$ over $(U,\psi)$ in $E$ such that $V \cap f^{-1}(U)\neq
\emptyset$ and $L\comp\vphi(V\cap f^{-1}(U))$, the values
$p_{s,\vphi\times_B\Psi,L}(f^*(u))$ are dominated by $p_{s,\Psi,L_1}(u)$ where
$L_1=(\psi\circ f\circ \vphi^{-1})(L)$.
\end{proof}

Also the following Lemma is immediate from the definition of the
seminorms on spaces of sections:
\begin{lemma}\label{evsr}
For finite-dimensional manifolds $M,N$, the operator
$$
\evsr: \Ga(\TOMN)\times\Ga(\proS\TsrM)\to\Ga(\prtS\TsrN)
$$
given by
$$
(A,\xi)\mapsto\Big((p,q)\mapsto\big((p,q)\,,\,\Asr(p,q){\,}\xi(p,
q)\big)\Big)
$$
is continuous with respect to the (F)-topologies.
\end{lemma}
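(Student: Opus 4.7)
The plan is to verify joint continuity locally by unpacking the seminorms \eqref{seminormssection} and observing that in coordinates $\evsr(A,\xi)$ is expressed by multiplying together components of $A$ (appearing to total multilinear degree $r+s$) with components of $\xi$ (appearing linearly). Thus I expect the argument to mirror that of Lemma~\ref{fstern} but with an additional product structure coming from $\Asr$.

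First I would fix a seminorm $p_{l,\Phi,L}$ on $\Ga(\prtS\TsrN)$ associated to a vector bundle chart of the pullback bundle sitting over a chart $(V,\vartheta)$ of $M\times N$. Without loss of generality $V=V_1\times V_2$ with $V_i$ a chart domain in $M$ resp.\ $N$, $\vartheta=\psi_1\times\psi_2$, and $L=L_1\times L_2$ with $L_i\comp\psi_i(V_i)$; the bundle chart of $\prtS\TsrN$ arises from a bundle chart $\Psi_2$ of $\TsrN$ over $(V_2,\psi_2)$ (as discussed in Appendix A). In such local coordinates the components of $\evsr(A,\xi)(p,q)$ are sums of expressions of the form
\[
a^{i_1}_{j_1}(p,q)\cdots a^{i_r}_{j_r}(p,q)\, b^{k_1}_{l_1}(p,q)\cdots b^{k_s}_{l_s}(p,q)\,\xi^{J}(p,q),
\]
where $a^i_j$ are the components of $A$ in some bundle chart $\Psi_A$ of $\TO(M,N)$ over $V_1\times V_2$ (cf.\ Appendix A), the $b^k_l$ are components of $A\adj\circ\fl$ in an analogous chart (being obtained from those of $A$ by a smooth, in fact rational, transformation on $V_1\times V_2$), and $\xi^J$ are the components of $\xi$ in the bundle chart of $\proS\TsrM$.

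Next I would apply the Leibniz rule to compute derivatives up to order $l$ of this product: each resulting term is a product of partial derivatives (of order $\le l$) of the component functions of $A$ and $\xi$. Taking suprema over $L$ and using smoothness and boundedness of the transition functions between $\Psi_A$ and $\Psi_2$ on $L$, I obtain an estimate of the form
\[
p_{l,\Phi,L}(\evsr(A,\xi))\ \le\ C\cdot \bigl(p_{l,\Psi_A,L}(A)\bigr)^{r+s}\cdot p_{l,\mathrm{pr}_1^*\Psi_1,L}(\xi),
\]
for a constant $C$ depending on the fixed geometric data. This bound is polynomial of degree $r+s$ in $\|A\|$ (measured by a suitable seminorm) and linear in $\|\xi\|$, so it is bounded on products of bounded sets. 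Since $\Ga(\TOMN)$ and $\Ga(\proS\TsrM)$ are metrizable with their (F)-topologies, such local polynomial--linear estimates in the defining seminorms imply joint continuity of $\evsr$.

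I do not anticipate a genuine obstacle: the only bookkeeping point is to make sure the bundle charts on $\TO(M,N)$, $\proS\TsrM$ and $\prtS\TsrN$ are chosen compatibly so that the transition data contribute only smooth bounded factors on the relevant compact sets. Once that is observed, the estimate is entirely routine, matching the author's remark that the lemma is immediate from the form of the seminorms \eqref{seminormssection}.
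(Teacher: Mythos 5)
The paper gives no proof beyond declaring the lemma ``immediate from the definition of the seminorms on spaces of sections,'' so your detailed local-coordinate unwinding---passing to product charts, expressing the components of $\evsr(A,\xi)$ as products of components of $A$, $\Aad\circ\fl$ and $\xi$, and applying the Leibniz rule---is exactly the verification the authors left to the reader, and it is correct. One small point of precision: the step from the seminorm estimate $p(\evsr(A,\xi))\le C\, q(A)^{r+s}\, r(\xi)$ to joint continuity rests not on metrizability per se but on the underlying multilinear structure (locally, $\evsr$ is the restriction to the diagonal in the $A$-slots of an $(r{+}s{+}1)$-multilinear map), which is what allows the usual telescoping argument to give continuity at an arbitrary point $(A_0,\xi_0)$ rather than merely at the origin.
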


For our final lemma and its corollaries, we will have to include 
infinite-dimensional manifolds
modelled over convenient vector spaces into our considerations, as well as
bundles over such manifolds
and respective spaces of sections. Again we follow \cite{KM}, this
time Sections 27--30. Note that the discussion concerning the (C)-topology
following Theorem \ref{KMeqF} is equally valid for $M$ and $E$ infinite-dimensional.
The main examples of infinite-dimensional manifolds
occurring in the present context are $\ahat\times M\times\bhat$ and
$\ahat\times\bhat$, where $\ahat$ is a closed affine hyperplane in the
(LF)-space $\OmncM$ and $\bhat$ is an (LF)-space itself.
Note that (F)-spaces, (LF)-spaces and closed hyperplanes thereof are convenient
by \cite[2.2 and 2.14]{KM}.
The construction of
pullback bundles in \cite[29.6]{KM}, proceeds in complete analogy to the
finite-dimensional case: In particular, for $\prt:\ahat\times M\times\bhat\to
M$, the pullback bundle $\prtS\TsrM$ can be realized as a manifold by
$(\ahat\times
M\times\bhat)\times_M\TsrM$,
or---more simply---by
$\ahat\times\bhat\times\TsrM$, with base
point map $(\om,A,t)\mapsto(\om,\pi(t),A)$ (the version which we
will exclusively use in what follows).

As a last prerequisite to the following lemma, we observe that
the evaluation map $\ev:\Cinf(M,E)\times M\to E$ is smooth for any
convenient vector space $E$ and for every manifold $M$. This result (which
does not appear explicitly in \cite{KM}) can be obtained by an argument
completely analogous to that of the vector space case (where an open subset
$U$
of a locally convex space $E$ takes the place of $M$): Due to the proof of
\cite[27.17]{KM}, $\Cinf(M,F)$ can be viewed as a closed linear subspace of a
product of spaces $\Cinf(\R,F)$. This statement replacing \cite[3.11 Lemma]{KM} in the
proof of \cite[Th.\ 3.12]{KM}, the latter as well as Cor.\ 
3.13\,(1) (saying that $\ev:\Cinf(U,F)\times U\to F$ is smooth) together with
their proofs carry over to the manifold case. As a by-product, we
obtain a fact which will be tacitly used in the proof of the lemma
below: By continuity of the
evaluation map and by the manifold analogue of \cite[3.12]{KM} just mentioned,
it follows that the obvious linear isomorphism
$\Cinf(M,\prod_\al E_\al)\cong\prod_\al\Cinf(M,E_\al)$ (for a manifold $M$ and
convenient vector spaces $E_\al$) is even a bornological isomorphism.

\begin{lemma}\label{kriegl}{\rm [A. Kriegl, personal communication]}
Let $M,N$ be manifolds and
$E\stackrel{\pi}{\to}N$ a smooth vector bundle over $N$ ($M$, $N$, $E$
possibly infinite-dimensional). Then we have a
bornological isomorphism
$$
\Cinf(M,\Ga(N,E))\cong\Ga(M\times N,\prtS E)
$$
with respect to the (C)-topologies. Elements $f,g$ corresponding to each other
by this isomorphism are
related by $(p,f(p)(q))=g(p,q)$; we write $g=f^\wedge$, $f=g^\vee$.
Moreover, the evaluation mapping
$$
\ev:\Cinf(M,\Ga(N,E))\times M\to\Ga(N,E)
$$
is smooth with respect to the (C)-topologies resp.\ the structure given on $M$.
\end{lemma}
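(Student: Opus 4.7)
The plan is to reduce the lemma to the vector-space exponential law \cite[27.17]{KM} via a vector bundle atlas, then glue. First, the maps $f\mapsto f^\wedge$ and $g\mapsto g^\vee$ defined by $f^\wedge(p,q) := (p,f(p)(q))$ and $g^\vee(p)(q) := \prt'(g(p,q))$ are obviously inverse linear bijections at the algebraic level. Well-definedness of $f^\wedge$ as a smooth section of $\prtS E$ uses the smoothness of $\ev:\Ga(N,E)\times N\to E$, established in the discussion preceding the lemma statement; indeed $(p,q)\mapsto f(p)(q)$ factors as $\ev\circ(f\times\id_N)$. Well-definedness of $g^\vee$ as a map into $\Ga(N,E)$ for each fixed $p$ is immediate from smoothness of $\prt'\circ g$ in $q$.

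The crux is verifying that $g^\vee$ is smooth as a map $M\to\Ga(N,E)$. I would fix a vector bundle atlas $\{(U_\al,\Psi_\al)\}$ of $E$ with typical fiber $Z$. By definition of the (C)-topology on $\Ga(N,E)$ (whose bounded sets, by the discussion before Corollary \ref{sectionKMeqF}, are atlas-independent) it suffices to show that each $\ti\Psi_\al\circ g^\vee:M\to\Cinf(U_\al,Z)$ is smooth. But this is precisely the exponential transpose of the local coordinate representation $(p,q)\mapsto\prt\circ\Psi_\al(\prt'(g(p,q)))$ of $g$ on $M\times U_\al$, which is smooth into the convenient vector space $Z$. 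Applying \cite[27.17]{KM} to the manifolds $M$, $U_\al$ and the convenient $Z$ yields smoothness of the transpose. The reverse smoothness of $f^\wedge$ is handled analogously: locally on $M\times U_\al$ its coordinate representation is the inverse transpose of $\ti\Psi_\al\circ f$, and the same exponential law applies.

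For the bornological statement I would argue that both directions preserve bounded sets. By the technical remark preceding Theorem \ref{actingondual}, it is enough to consider absolutely convex closed bounded $B$ with $E_B$ Banach; for such $B$, boundedness transfers through the atlas-level bijections because \cite[27.17]{KM} is itself a bornological isomorphism (matching bounded sets of $\Cinf(M,\Cinf(U_\al,Z))$ with those of $\Cinf(M\times U_\al,Z)$). The smoothness of the evaluation $\ev:\Cinf(M,\Ga(N,E))\times M\to\Ga(N,E)$ then follows by transporting the analogous statement for $\Ga(M\times N,\prtS E)\times M\to\Ga(N,E)$ across the isomorphism; the latter is handled locally by the smoothness (again from \cite[27.17]{KM} together with \cite[3.13(1)]{KM}) of $\Cinf(M\times U_\al,Z)\times M\to\Cinf(U_\al,Z)$.

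The main obstacle is keeping the bundle-theoretic bookkeeping coherent with the atlas-based definition of the (C)-topology on section spaces: one must check that the local transposes glue to global smooth sections/maps, and that the notion of boundedness one exploits does not depend on the chosen trivializations. The atlas-independence of the bornology, already flagged in the text preceding Corollary \ref{sectionKMeqF}, is exactly what makes these gluings well-defined; everything else is routine bookkeeping on top of \cite[27.17]{KM}.
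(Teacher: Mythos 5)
Your plan tracks the paper's proof quite closely: both reduce the statement to the vector-space exponential law \cite[27.17]{KM} by localizing via a vector bundle atlas for $E$ and by noting that the bornology on $\Ga(N,E)$ is atlas-independent, and both obtain the evaluation statement as a corollary.

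There is, however, one point where you move too quickly. To justify that smoothness of $g^\vee:M\to\Ga(N,E)$ may be verified by checking that each composition $\ti\Psi_\al\circ g^\vee:M\to\Cinf(U_\al,Z)$ is smooth, it is not enough to invoke ``the definition of the (C)-topology'' as an initial topology; in convenient calculus an initial \emph{topology} does not automatically give an initial \emph{smooth structure}. What is actually needed is that the linear embedding $\Ga(N,E)\hookrightarrow\prod_\al\Cinf(U_\al,Z)$ given by $u\mapsto(\ti\Psi_\al u)_\al$ has $c^\infty$-closed image (so that it is initial for smooth curves, cf.\ \cite[30.1]{KM}). This is exactly what the paper verifies explicitly: it characterizes the image by the coherence conditions $\phi_\alb(f_\al)=\phi_\bal(f_\bet)$ and shows these are $c^\infty$-closed, relying on smoothness of the transition maps — and then it repeats this bookkeeping for $\Cinf(M,\Ga(N,E))$ and $\Ga(M\times N,\prtS E)$ to see that, under the factorwise exponential law, the two closed subspaces correspond. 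You also slightly conflate two distinct technical facts: atlas-independence of the bornology (needed so that the choice of $\{(U_\al,\Psi_\al)\}$ doesn't matter) and $c^\infty$-closedness of the image (needed so that smoothness is detected coordinatewise). Both are used, but they play separate roles. Once these points are supplied, the rest of your argument — in particular transporting the evaluation map across the bornological isomorphism — goes through exactly as the paper does.
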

\begin{proof}
For the proof, we will represent each of the two spaces as a closed
(with respect to the $c^\infty$-topology, cf.\ \cite[2.12]{KM}) subspace of
a respective product space; these product spaces are then seen to be isomorphic
by means of the exponential law for spaces of type $\Cinf(P,F)$ ($P$ a
manifold and $F$ convenient; \cite[27.17]{KM}). In all three steps, the
families of bounded sets are preserved. Finally, it is shown that under the
isomorphism between the product spaces in fact the subspaces
$\Cinf(M,\Ga(N,E))$ and $\Ga(M\times N,\prtS E)$ correspond to each other.

Choose a vector bundle atlas for $E\stackrel{\pi}{\to}N$ consisting of
local
trivializations $\psi_\al:\pi^{-1}(U_\al)\to U_\al\times Z$ where $Z$ denotes
the typical fiber of $E$. By \cite[30.1]{KM} we obtain a linear embedding
\begin{align*}
\Ga(N,E)&\hookrightarrow\prod_\al\Cinf(U_\al,Z)\\
((\prt\circ\psi_\al)_*)_\al:u&\mapsto (\prt\circ\psi_\al\circ u|_{U_\al})_\al
\end{align*}
having $c^\infty$-closed image, the latter due to the fact that that it can be
characterized as the subspace consisting of all $(f_\al)_\al$ for
which the maps $q\mapsto\psi_\al^{-1}(q,f_\al(q))$
form a coherent family of local sections of $E$.
For $c^\infty$-closedness, express coherence by the conditions
$\phi_{\alb}(f_\al)=\phi_{\bal}(f_\bet)$ for all $\al,\bet$ where
$$
\phi_{\alb}:\Cinf(U_\al,Z)\ni f_\al\mapsto\psi_\al^{-1}\circ(\id_{U_\al},
f_\al)|_{\Ualb}\in\Ga(\Ualb,E|\Ualb)
$$
and similarly for $\phi_{\bal}$. Now, it certainly would suffice to show
smoothness (hence $c^\infty$-continuity) of both $\phi_\alb$ and
$\phi_{\bal}$. Making things work for $\phi_\alb$ requires $\Ga(\Ualb,E|\Ualb)$
to be equipped with the topology induced by the atlas
$\ca_\al:=\{(\Ualb,\psi_\al|_{E|\Ualb})\}$ on $E|\Ualb$ whereas for
$\phi_\bal$, the atlas $\ca_\bet:=\{(\Ualb,\psi_\bet|_{E|\Ualb})\}$ would be
appropriate.

However, due to the remarks preceding Corollary
\ref{KMeqF}, we can safely use $\ca_\al\cup\ca_\bet$ on $E|\Ualb$ (doing justice
to both $\phi_\alb$ and $\phi_\bal$ simultaneously this way) without affecting
boundedness in $\Ga(\Ualb,E|\Ualb)$ resp.\ smoothness of $\phi_\alb$ and
$\phi_\bal$.

From the above, we get an embedding

\begin{align*}
\Cinf(M,\Ga(N,E))&\hookrightarrow\Cinf\Big(M,\prod_\al\Cinf(U_\al,Z)\Big)
\cong\prod_\al\Cinf(M,\Cinf(U_\al,Z))\\
((\prt\circ\psi_\al)_{**})_\al:f&\mapsto (p\mapsto\prt\circ\psi_\al\circ
f(p)|_{U_\al})_\al,
\end{align*}
again with $c^\infty$-closed image, consisting of those $(g_\al)_\al$ for which
$\psi_\al^{-1}(q,g_\al(p)(q))$
forms a coherent family of local sections of $E$
for every fixed $p\in M$.

On the other hand, noting the family of all $(M\times
U_\al,\id_M\times\psi_\al)$ to form a vector bundle atlas
for $\prtS E\to M\times N$, we obtain a linear embedding
\begin{align*}
\Ga(M\times N,\prtS E)&\hookrightarrow\prod_\al\Cinf(M\times U_\al,Z)\\
\ti u&\mapsto (\prt\circ(\id_M\times\psi_\al)\circ \ti u|_{M\times U_\al})_\al
\end{align*}
mapping $\Ga(M\times N,\prtS E)$ onto the $c^\infty$-closed subspace consisting
of all $(\ti f_\al)_\al$ for which
$(\id_M\times\psi_\al)^{-1}(p,q,\ti f_\al(p,q))=(p,\psi_\al^{-1}(q,\ti
f_\al(p,q)))$
forms a coherent family of local sections of $\prtS E$.
Via the bornological isomorphisms $\Cinf(M\times
U_\al,Z)\cong\Cinf(M,\Cinf(U_\al,Z))$
given by the exponential law \cite[27.17]{KM},
we obtain an isomorphism of the last two product spaces occurring
above. This, in turn, induces bornological isomorphisms
of the two aforementioned subspaces resp.\ of $\Cinf(M,\Ga(N,E))$
and $\Ga(M\times N,\prtS E)$. Tracing all the assignments involved in the
construction shows the explicit form of this last isomorphism to be the one
given in the lemma.


Finally, replacing $E$ by $\Ga(N,E)$ in the remarks on smoothness of the
evaluation map preceding the lemma shows that also
$\ev:\Cinf(M,\Ga(N,E))\times M\to\Ga(N,E)$
is smooth with respect to the (C)-topologies resp.\ the structure given on $M$.
\end{proof}

Note that, using the notations of the preceding lemma, sections $u$ of
$\prtS E$ are precisely given by smooth maps $\bar u:M\times N\to E$ with
$\pi\circ\bar u=\prt$, i.e., $\bar u(p,q)$ having $q$ as base point, for
all $p,q$. The section $u$ itself takes the form $u(p,q)=(p,\bar
u(p,q))$.

For the corollary to follow, recall that $\prt':\prtS E\to E$ denotes the
canonical projection as defined in Appendix A.
\begin{corollary}\label{corevbar}
For manifolds $M,N$ and a vector bundle $E\stackrel{\pi}{\to}N$ ($M$, $N$,
$E$ as in Lemma \ref{kriegl}), the
operator $\evbar:\Ga(M\times N,\prtS E)\times M\to\Ga(N,E)$ defined by
$\evbar(u,p)(q):=\prt'(u(p,q))$ is smooth when both section spaces are
equipped with their (C)-topologies.
\end{corollary}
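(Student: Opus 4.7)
The plan is to deduce the corollary directly from Lemma~\ref{kriegl} by recognizing $\evbar$ as a composition of two maps that the lemma has already shown to be smooth.

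First, I would identify the relation between $\evbar$ and the bornological isomorphism $(\cdot)^\vee:\Ga(M\times N,\prtS E)\to\Cinf(M,\Ga(N,E))$ supplied by Lemma~\ref{kriegl}. Given $u\in\Ga(M\times N,\prtS E)$, write $u(p,q)=(p,\bar u(p,q))$ with $\bar u(p,q)\in E_q$, so that $\prt'(u(p,q))=\bar u(p,q)$. By the defining relation $(p,f(p)(q))=g(p,q)$ of the isomorphism, the element $u^\vee\in\Cinf(M,\Ga(N,E))$ corresponding to $u$ is given pointwise by $u^\vee(p)(q)=\bar u(p,q)=\prt'(u(p,q))=\evbar(u,p)(q)$. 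Hence
\[
\evbar(u,p)=u^\vee(p)=\ev\bigl(u^\vee,p\bigr),
\]
which exhibits $\evbar$ as the composition
\[
\Ga(M\times N,\prtS E)\times M\xrightarrow{(\cdot)^\vee\times\id_M}\Cinf(M,\Ga(N,E))\times M\xrightarrow{\ev}\Ga(N,E).
\]

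Next I would invoke smoothness of each factor. The map $(\cdot)^\vee$ is a bornological linear isomorphism by Lemma~\ref{kriegl}, hence bounded, hence smooth by \cite[2.11]{KM}; so $(\cdot)^\vee\times\id_M$ is smooth with respect to the (C)-topology on the source space and the structure on $M$. The evaluation map $\ev:\Cinf(M,\Ga(N,E))\times M\to\Ga(N,E)$ is smooth by the second assertion of Lemma~\ref{kriegl}. The chain rule \cite[3.18]{KM} then yields smoothness of $\evbar$.

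There is essentially no obstacle here, since all the work has been done in Lemma~\ref{kriegl}; the only point to verify carefully is the identification $u^\vee(p)(q)=\prt'(u(p,q))$, which is immediate from the explicit form of the isomorphism together with the realization of $\prtS E$ used throughout (namely, a section of $\prtS E$ being given by a smooth map $\bar u:M\times N\to E$ satisfying $\pi\circ\bar u=\prt$, with the section itself of the form $u(p,q)=(p,\bar u(p,q))$).
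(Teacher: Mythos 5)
Your proof is correct and follows essentially the same route as the paper: both deduce the result by transporting the smoothness of $\ev$ from Lemma~\ref{kriegl} across the bornological isomorphism $(\cdot)^\vee$, which is smooth by \cite[2.11]{KM}. Your version merely makes the composition and the identification $u^\vee(p)(q)=\prt'(u(p,q))$ more explicit.
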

\begin{proof}
According to Lemma \ref{kriegl}, $\evbar$ and $\ev$ correspond to each other via
the bornological isomorphism $\Cinf(M,\Ga(N,E))\cong\Ga(M\times N,\prtS E)$. By
\cite[2.11]{KM}, this isomorphism and its inverse
are smooth. Hence
the smoothness of $\evbar$ follows from that of $\ev$.
\end{proof}
\begin{corollary}\label{expCF}
Let $M$ and $N$ be manifolds, and
$E\stackrel{\pi}{\to}N$ a smooth vector bundle over $N$ ($N$ and $E$ finite-dimensional). 
Then for every $u\in\Ga(M\times N,\prtS E)$ the associated map $u^\vee:M\to\Ga(N,E)$ is smooth
with respect to the (F)-topology on $\Ga(N,E)$.
\end{corollary}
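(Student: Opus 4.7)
The plan is to deduce Corollary \ref{expCF} by combining Lemma \ref{kriegl} with Corollary \ref{sectionKMeqF}, exploiting the fact that smoothness in the sense of \cite{KM} depends only on the underlying bornology.

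First I would apply Lemma \ref{kriegl} to the given $u\in\Ga(M\times N,\prtS E)$ to obtain that the associated map $u^\vee:M\to\Ga(N,E)$ is smooth when the target space $\Ga(N,E)$ carries its (C)-topology; the lemma explicitly provides the bornological isomorphism $\Cinf(M,\Ga(N,E)_C)\cong\Ga(M\times N,\prtS E)_C$ realizing precisely this correspondence $u\leftrightarrow u^\vee$.

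Next I would invoke Corollary \ref{sectionKMeqF}, which, since $N$ and $E$ are finite-dimensional, guarantees that the (C)- and (F)-topologies on $\Ga(N,E)$ share the same family of bounded subsets. By the fundamental principle of convenient calculus recalled at the beginning of Appendix B (cf.\ \cite[1.8]{KM}), the notion of smoothness of a map into a locally convex space depends only on the bornology of the target. Consequently $\Cinf(M,\Ga(N,E)_C)=\Cinf(M,\Ga(N,E)_F)$ as sets of mappings, and the (C)-smoothness of $u^\vee$ established in the first step immediately yields its (F)-smoothness.

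I do not expect any genuine obstacle here: the conceptual content of the corollary has already been carried by Lemma \ref{kriegl} (for the bornological isomorphism) and Corollary \ref{sectionKMeqF} (for the bornological coincidence of (C) and (F) on section spaces over finite-dimensional bases with finite-dimensional fiber). The only point worth stating carefully is the legitimacy of transferring smoothness between two topologies with identical bornology; this is exactly the content of \cite[1.8]{KM} and should be cited explicitly to close the argument.
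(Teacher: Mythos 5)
Your proof is correct and follows essentially the same route as the paper's: apply Lemma \ref{kriegl} to obtain (C)-smoothness of $u^\vee$, then use Corollary \ref{sectionKMeqF} to identify the (C)- and (F)-bounded sets on $\Ga(N,E)$, and conclude via the bornological nature of smoothness. Citing \cite[1.8]{KM} explicitly for the last step is a reasonable refinement, as the paper relies on this fact implicitly (it is stated in the opening paragraph of Appendix B).
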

\begin{proof}
By Lemma \ref{kriegl}, $u^\vee\in\Cinf(M,\Ga(N,E))$ where $\Ga(N,E)$ carries
the (C)-topology. By Corollary \ref{sectionKMeqF}, the (C)- resp.\ the
(F)-bounded subsets on $\Ga(N,E)$ are the same. Therefore, $u^\vee$ is also
smooth into $\Ga(N,E)_F$.
\end{proof}
If also $M$ has finite dimension Corollary \ref{expCF} can be proved in the
well-known ``classical'' manner, using charts. However, we need the
result in the general case where $M$ is of infinite dimension.
\end{appendix}
\vskip1em
{\bf Acknowledgments:} We would like to thank James D.E.\ Grant and Andreas
Kriegl for helpful discussions. Work on this paper was supported by projects
P16742, Y237, and P20525 of the Austrian Science Fund.

\end{document}